\newcommand{\RR}{\mathbb{R}}
\newcommand{\BB}{\mathbb{B}}
\newcommand{\dom}{{\mathrm{dom}}}
\newcommand{\cl}{{\mathrm{cl}}}
\newcommand{\crit}{{\mathbf{crit}}}
\newcommand{\prox}{{\mathbf{prox}}}
\newcommand{\cY}{{\mathcal{Y}}}
\newcommand{\cF}{{\mathcal{F}}}
\newcommand{\cS}{{\mathcal{S}}}
\newcommand{\cE}{{\mathcal{E}}}
\newcommand{\cN}{{\mathcal{N}}}
\DeclareMathOperator*{\Min}{minimize}
\newtheorem{theorem}{Theorem}
\newtheorem{proposition}{Proposition}
\newtheorem{lemma}{Lemma}
\newtheorem{definition}{Definition}
\newtheorem{corollary}{Corollary}
\newtheorem{remark}{Remark}
\begin{document}

\title{New Analysis of Linear Convergence of Gradient-type Methods via Unifying Error Bound Conditions}

\author{Hui Zhang\thanks{
Department of Mathematics, National University of Defense Technology,
Changsha, Hunan, 410073, P.R.China. Corresponding author. Email: \texttt{h.zhang1984@163.com}}
}

\date{\today}

\maketitle

\begin{abstract}
This paper reveals that a common and central role, played in many error bound (EB) conditions and a variety of gradient-type methods, is a residual measure operator. On one hand, by linking this operator with other optimality measures, we define a group of abstract EB conditions, and then analyze the interplay between them; on the other hand, by using this operator as an ascent direction, we propose an abstract gradient-type method, and then derive EB conditions that are necessary and sufficient for its linear convergence. The former provides a unified framework that not only allows us to find new connections between many existing EB conditions, but also paves a way to construct new ones. The latter allows us to claim the weakest conditions guaranteeing linear convergence for a number of fundamental algorithms, including the gradient method, the proximal point algorithm, and the forward-backward splitting algorithm. In addition, we show linear convergence for the proximal alternating linearized minimization algorithm under a group of equivalent EB conditions, which are strictly weaker than the traditional strongly convex condition. Moreover, by defining a new EB condition, we show Q-linear convergence of Nesterov's accelerated forward-backward algorithm without strong convexity. Finally, we verify EB conditions for a class of dual objective functions.
\end{abstract}

\textbf{Keywords.} Residual measure operator, error bound,  gradient descent, linear convergence, proximal point algorithm, forward-backward splitting algorithm, proximal alternating linearized minimization, Nesterov's acceleration, dual objective function
\newline

\textbf{AMS subject classifications.} 90C25, 90C60, 65K10, 49M29

%% \linenumbers

%% main text\
\section{Introduction}
A standard assumption for proving linear convergence of gradient-type methods is strong convexity \cite{nesterov2004introductory}. In practice, however, strong convexity is too stringent. Moreover, various gradient-type methods for solving convex optimization problems have exhibited linear convergence in numerical experiments even when strong convexity is absent; see e.g. \cite{Hale2008fixed,Lai2012Augmented,xiao2013a}. Thereby, one would wonder whether such a phenomenon can be explained theoretically, and whether there exist weaker alternatives to strong convexity that retain fast rates.

A very powerful idea to address these questions is to connect error bound (EB) conditions with the convergence rate estimation of gradient-type methods. This idea has a long history dating back to 1963 when Polyak introduced an EB inequality as a sufficient condition for gradient descent to attain linear convergence \cite{Polyak1963Gradient}. In the same year, a wide class of inequalities, which include Polyak's as a special case, were introduced by {\L}ojasiewicz \cite{lojasiewicz1963propriete}. In the recent manuscript \cite{Karimil2016linear}, the EB condition of Polyak-{\L}ojasiewicz's type was further developed for linear convergence of gradient and proximal gradient methods.  The second type of EB conditions is due to Hoffman, who proposed an EB inequality for systems of linear inequalities \cite{Hoffman1952approximate} in 1952.  Along this line, Luo and Tseng in the early 90's contributed several aspects for connecting EB conditions of Hoffman's type with convergence analysis of descent methods \cite{Luo1990On,Luo1993Error}. Recently, global versions of EB conditions of Hoffman's type attract much attention \cite{So2013Non,Wang2014Iteration,Zhou2015A}. The third type of EB conditions is the quadratic growth condition (also called zero-order EB condition in \cite{Bolte2015From}), which might go back to the work \cite{zol1978on}. It was recently rediscovered in the special case of convex functions, and widely used to derive linear convergence for many gradient-type methods as well \cite{Liu2014Asynchronous,Gong14linear,I2015Linear}.

Moreover, there recently emerges a surge of interest in developing new EB conditions guaranteeing (global) linear convergence for various gradient-type methods. For example, the authors of \cite{Lai2012Augmented,zhang2013gradient,zhang2015restricted} proposed a restricted secant inequality (RSI), and developed the restricted strongly convex (RSC) property with parameter $\nu$ for analyzing linear convergence of (dual) gradient descent methods and Nesterov's restart accelerated methods; the authors of \cite{I2015Linear} proposed several relaxations of strong convexity that are sufficient for obtaining linear convergence for (projected and accelerated) gradient-type methods.

Another line of recent works is to find connections between existing EB conditions. The authors of \cite{Drusvyatskiy2016Error} discussed the relationship between the quadratic growth condition and the EB condition of Hoffman's type (also called Luo-Tseng's type in \cite{Li2016Calculus}).
Parallel to and partially influenced by the work \cite{Drusvyatskiy2016Error}, the author of this paper also established several new types of equivalence between the RSC property, the quadratic growth condition, and the EB condition of Hoffman's type in \cite{Zhang2015The}. The authors of \cite{Bolte2015From} showed the equivalence between the quadratic growth condition and the Kurdyka-{\L}ojasiewicz inequality. Besides, we note that works \cite{I2015Linear} and \cite{Karimil2016linear} also discussed the relationships among many of these EB conditions.

Based on these two lines of recent developments, two natural questions arise. The first one is whether there is a unified framework for defining different EB conditions and analyzing connections between them.  The second one is whether these sufficient conditions guaranteeing linear convergence for gradient-type methods are also necessary. To answer these two questions, we will rely on a vital observation:  a common and key role, played in many EB conditions and a variety of gradient-type methods, is a residual measure operator. This observation immediately leads us to the following discoveries:
\begin{enumerate}
  \item By linking the residual measure operator with other optimality measures, we define a group of abstract EB conditions. Then, we
        comprehensively analyze the interplay between them by means of the technique developed in \cite{Bolte2015From}, which plays a fundamental role of the corresponding error bound equivalence. The definition of abstract EB conditions not only unifies many existing EB conditions, but also helps us to construct new ones. The interplay between the abstract EB conditions allows us to find new connections between many existing EB conditions.

  \item By viewing the residual measure operator as an ascent direction, we propose an abstract gradient-type method, and then derive EB conditions that are necessary and sufficient for its linear convergence. The latter allows us to claim the weakest (or say, necessary and sufficient)  conditions  guaranteeing linear convergence for a number of fundamental algorithms, including the gradient method (applied to possibly nonconvex optimization), the proximal point algorithm, and the forward-backward splitting algorithm. The sufficiency of these EB conditions for linear convergence has been widely known; see e.g. \cite{Bolte2015From}. In contrast, there is very little attention to the discussion of necessity.
\end{enumerate}
In addition, we also make the following contributions, from aspects of block coordinate gradient descent, Nesterov's acceleration, and verifying EB conditions, separately:
\begin{enumerate}
\item[3.] We show linear convergence for the proximal alternating linearized minimization (PALM) algorithm under a group of equivalent EB conditions. It has been recently shown \cite{Shefi2016on,Hong2016iter,Li2016An} that PALM achieves sublinear convergence for convex problems and linear convergence for strongly convex problems. In this study, we show its linear convergence under strictly weaker conditions than strong convexity.

\item[4.] By defining a new EB condition, we obtain Q-linear convergence of Nesterov's accelerated forward-backward algorithm, which generalizes the Q-linear convergence of Nesterov's accelerated gradient method, recently independently discovered in \cite{Karimi2016A} and \cite{Wilson2016A}. The new EB condition in some special cases can be viewed as a strictly weaker relaxation of strong convexity. In such sense, we show Q-linear convergence of Nesterov's accelerated method without strong convexity. Our proof idea is partially inspired by \cite{Attouch2015the} but might be of interest in its own right.

\item[5.] We provide a new proof to show that a class of dual objective functions satisfy EB conditions, under slightly weaker assumptions, again by means of the technique developed in \cite{Bolte2015From}.  The authors of \cite{Lai2012Augmented} gave the first proof for a special case of this class of functions, and the author of \cite{Frank2015linear} gave the first general proof by contradiction.

\end{enumerate}

The paper is organized as follows. In Section \ref{sec2}, we present the basic notation and some elementary preliminaries. In Section \ref{sec3}, we analyze necessary and sufficient conditions guaranteeing linear convergence for the gradient descent.
In Section \ref{sec4}, we define a group of abstract EB conditions, and analyze the interplay between them. In Section \ref{sec5}, we define an abstract gradient-type method, and derive EB conditions that are necessary and sufficient for guaranteeing its linear convergence.
In Section \ref{sec6}, we study linear convergence of the PALM algorithm. In Section \ref{sec7}, we study linear convergence of Nesterov's accelerated forward-backward algorithm.  In Section \ref{sec8}, we verify EB conditions for a class of dual objective functions. Finally, in Section \ref{sec9}, we give a short summary of this paper, along with some discussion for future work.

\section{Notation and preliminaries}\label{sec2}
Throughout the paper, $\RR^n$ will denote an $n$-dimensional Euclidean space associated with inner-product $\langle \cdot, \cdot\rangle$ and induced norm $\|\cdot\|$.
%Unless otherwise stated, we use the standard notation and terminology from \cite{rockafellar1970convex,nesterov2004introductory}.
For any nonempty $Q\subset \RR^n$, we define the distance function by
$d(x,Q):=\inf_{y\in Q}\|x-y\|.$
For a nonempty set $Q\subset \RR^n$, we define the indicator function of $Q$ by
\begin{eqnarray*}
\delta_Q(x):=
\left\{\begin{array}{lll}
 0,       &\textrm{if} ~~x\in Q; \\
 +\infty, &\textrm{otherwise}.
\end{array} \right.
\end{eqnarray*}
We say that $f$ is gradient-Lipschitz-continuous with  modulus $L>0$ if
\begin{equation*}
 \quad\forall x, y\in\RR^n,~~\|\nabla f(x)-\nabla f(y)\|\leq L\|x-y\|, \label{Lip}
\end{equation*}
and $f$ is strongly convex with modulus $\mu>0$ if for any $\alpha\in [0,1]$,
\begin{equation*}
\quad\forall x, y\in\RR^n,~~f(\alpha x+(1-\alpha)y)\leq \alpha f(x)+(1-\alpha)f(y)-\frac{1}{2}\mu\alpha(1-\alpha)\|x-y\|^2,  \label{SC1}
\end{equation*}
or if (when it is differentiable)
\begin{equation*}
\quad\forall x, y\in\RR^n,~~\langle \nabla f(x)-\nabla f(y), x-y\rangle \geq \mu \|x-y\|^2.\label{SC}
\end{equation*}
We will consider the following classes of functions.
\begin{itemize}
  \item $\cF^1(\RR^n)$:  the class of continuously differentiable convex functions from $\RR^n$ to $\RR$;
  \item $\cF^{1,1}_{L}(\RR^n)$:  the class of gradient-Lipschitz-continuous convex functions from $\RR^n$ to $\RR$ with Lipschitz modulus $L$;
  \item $\cS^{1,1}_{\mu,L}(\RR^n)$: the class of gradient-Lipschitz-continuous and strongly convex functions  from $\RR^n$ to $\RR$ with Lipschitz modulus $L$ and strongly convex modulus $\mu$;
  \item $\Gamma(\RR^n)$: the class of proper and lower semicontinuous functions from $\RR^n$ to $(-\infty, +\infty]$;
  \item $\Gamma_0(\RR^n)$: the class of proper and lower semicontinuous convex functions from $\RR^n$ to $(-\infty, +\infty]$.
\end{itemize}
Obviously, we have the following inclusions:
$$\cS^{1,1}_{\mu,L}(\RR^n)\subseteq  \cF^{1,1}_{L}(\RR^n) \subseteq \cF^1(\RR^n),~~~\Gamma_0(\RR^n)\subseteq\Gamma(\RR^n).$$
It is convenient to denote by
$\mathrm{Arg}\min f$ the set of optimal solutions of minimizing $f$ over $\RR^n$, and to use ``$\arg\min f$", if the solution is unique, to stand for the unique solution. If $\mathrm{Arg}\min f$ is nonempty, we let $\min f$ present the minimum of $f$ over $\RR^n$.

The notation of subdifferential plays a central role in (non)convex optimization.
\begin{definition}[subdifferentials, \cite{Rockafellar2004Variational}] Let  $f\in \Gamma(\RR^n)$. Its domain is defined by
$$\dom f:=\{x\in\RR^n: f(x)<+\infty\}.$$
\begin{enumerate}
  \item[(a)] For a given $x\in \dom f$, the Fr$\acute{e}$chet subdifferential of $f$ at $x$, written $\hat{\partial}f(x)$, is the set of all vectors $u\in \RR^n$ which satisfy
  $$\lim_{y\neq x}\inf_{y\rightarrow x}\frac{f(y)-f(x)-\langle u, y-x\rangle}{\|y-x\|}\geq 0.$$
When $x\notin \dom f$, we set $\hat{\partial}f(x)=\emptyset$.

\item[(b)] The (limiting) subdifferential, of $f$ at $x\in \RR^n$, written $\partial f(x)$, is defined through the following closure process
$$\partial f(x):=\{u\in\RR^n: \exists x^k\rightarrow x, f(x^k)\rightarrow f(x)~\textrm{and}~ \hat{\partial}f(x^k)\ni u^k \rightarrow u~\textrm{as}~k\rightarrow \infty\}.$$

\item[(c)] If we further assume that $f$ is convex, then the subdifferential of $f$ at $x\in \dom f$ can also be defined by
$$\partial f(x):=\{v\in\RR^n: f(z)\geq f(x)+\langle v, z-x\rangle,~~\forall ~z\in \RR^n\}.$$
The elements of $\partial f(x)$ are called subgradients of $f$ at $x$.
\end{enumerate}
\end{definition}
 Denote the domain of $\partial f$ by $\dom \partial f:= \{x\in \RR^n:\partial f(x)\neq \emptyset\}$. Then, if $f\in \Gamma(\RR^n)$ and $x\in \dom f$, then $\partial f(x)$ is closed (see Theorem 8.6 in \cite{Rockafellar2004Variational}); if $f\in \Gamma_0(\RR^n)$ and $x\in \dom \partial f$, then $\dom \partial f\subset \dom f$ and $\partial f(x)$ is a nonempty closed convex set (see Proposition 16.3 in \cite{Bauschke2011Convex}). In the later case, we denote by $\partial^0 f(x)$ the unique least-norm element of $\partial f(x)$  for $x\in \dom \partial f$, along with the convention that $\|\partial^0 f(x)\| =+\infty$ for $x\notin \dom \partial f$.
Points whose subdifferential contains 0 are called critical points. The set of critical points of $f$ is denoted by $\crit f$. If $f\in \Gamma_0(\RR^n)$, then $\crit f=\mathrm{Arg}\min f$.

Let $f\in\Gamma_0(\RR^n)$; its Fenchel conjugate function $f^*:\RR^n\rightarrow (-\infty, +\infty]$ is defined by
$$f^*(x):=\sup_{y\in\RR^n}\{\langle y, x\rangle -f(y)\},$$
and the proximal mapping operator by
$$\prox_{\lambda f}(x):=\arg\min_{y\in\RR^n}\{f(y)+\frac{1}{2\lambda}\|y-x\|^2\}.$$
For each $x\in \overline{\dom f} $, it is well-known \cite{Brezis1973Op} that there is a unique absolutely continuous curve $\chi_x: [0, \infty)\rightarrow \RR^n$ such that $\chi_x(0)=x$ and for almost every $t>0$,
$$\dot{\chi}_x(t)\in -\partial f(\chi_x(t)).$$
We say that $\Omega\subset \RR^n$ is $\partial f$-invariant if
$$(\forall x\in \Omega\cap \dom~\partial f)(\textrm{for a.e.}, t>0)~~\chi_x(t)\in \Omega.$$
This concept was proposed in \cite{Brezis1973Op} and recently used in \cite{Garrigos2017conv}. There are several types of  $\Omega$ being $\partial f$-invariant; see Example 7.2 in \cite{Garrigos2017conv} and Section IV.4 in \cite{Brezis1973Op}. In Sections \ref{sec5} and \ref{sec8}, we will use the fact that the sublevel set $X_r:=\{x:f(x)\leq r\}$ is always $\partial f$-invariant for any function $f\in \Gamma_0(\RR^n)$.

At last, we present some variational analysis tools. Let $\mathcal{T}$, $\cE$, and $\cE_i,$ $i=1,2$ be finite-dimensional Euclidean spaces. The closed ball around $x\in\cE$ with radius $r>0$ is denoted by $\BB_\cE(x,r):=\{y\in\cE:\|x-y\|\leq r\}$. The unit ball is denoted by $\BB_\cE$ for simplicity, and the open unit ball around the original in $\cE$ is by $\BB_\cE^o$.  A multi-function $S: \mathcal{E}_1\rightrightarrows\mathcal{E}_2$ is a mapping assigning each point in $\cE_1$ to a subset of $\cE_2$. The graph of $S$ is defined by
$$\verb"gph"(S):=\{(u,v)\in\cE_1\times\cE_2: v\in S(u)\}.$$
The inverse map $S^{-1}:\mathcal{E}_2\rightrightarrows\mathcal{E}_1$  is defined by setting
$$S^{-1}(v):=\{u\in \cE_1: v\in S(u)\}.$$
Calmness and metric subregularity have been considered in various contexts and under various names. Here, we follow the terminology of Dontchev and Rockafellar \cite{Dontchev2013Implicit}.
\begin{definition}[\cite{Dontchev2013Implicit}, Chapter 3H]
\begin{enumerate}
  \item [(a)] A multi-function $S: \mathcal{E}_1\rightrightarrows\mathcal{E}_2$ is said to be calm with constant $\kappa>0$ around $\bar{u}\in\mathcal{E}_1$ for $\bar{v}\in \mathcal{E}_2$ if $(\bar{u},\bar{v})\in \verb"gph"(S)$ and there exist constants $\epsilon, \delta>0$ such that
   \begin{equation}\label{calm1}
 S(u)\cap \BB_{\mathcal{E}_2}(\bar{v},\epsilon)\subseteq S(\bar{u})+\kappa\cdot \|u-\bar{u}\|_2\BB_{\mathcal{E}_2},~~\forall u\in \BB_{\mathcal{E}_1}(\bar{u},\delta),
\end{equation}
 or equivalently,
    \begin{equation}\label{calm2}
 S(u)\cap \BB_{\mathcal{E}_2}(\bar{v},\epsilon)\subseteq S(\bar{u})+\kappa\cdot \|u-\bar{u}\|_2\BB_{\mathcal{E}_2},~~\forall u\in \mathcal{E}_1.
\end{equation}

  \item [(b)] A multi-function $S: \mathcal{E}_1\rightrightarrows\mathcal{E}_2$ is said to be metrically sub-regular with constant $\kappa>0$ around  $\bar{u}\in\mathcal{E}_1$ for $\bar{v}\in \mathcal{E}_2$ if $(\bar{u},\bar{v})\in \verb"gph"(S)$ and there exists a constant $\delta>0$ such that
   \begin{equation}\label{mesub}
 d(u, S^{-1}(\bar{v}))\leq \kappa\cdot d(\bar{v},S(u)),~~\forall u\in \BB_{\mathcal{E}_1}(\bar{u},\delta).
\end{equation}
\end{enumerate}
\end{definition}
Note that the calmness defined above is weaker than the local upper Lipschitz-continuity  property  \cite{Robinson1981some}:
\begin{equation}\label{calm3}
 S(u) \subseteq S(\bar{u})+\kappa\cdot \|u-\bar{u}\|_2\BB_{\mathcal{E}_2},~~\forall u\in \BB_{\mathcal{E}_1}(\bar{u},\delta),
\end{equation}
which requires the multi-functions $S$ to be calm around $\bar{u}\in\cE_1$ with constant $\kappa>0$ for any $\bar{v}\in \cE_2$.
Recently, the local upper Lipschitz-continuity property \eqref{calm3} was employed in \cite{Frank2015linear} as a main assumption for verifying EB conditions of a class of dual objective functions.

\section{The gradient descent: a necessary and sufficient condition for linear convergence}\label{sec3}
In this section, we first figure out the weakest condition that ensures gradient descent to converge linearly, and then we show that a number of existing linear convergence results can be recovered in a unified and transparent manner. This is a "warm-up" section for the forthcoming abstract theory in Sections \ref{sec4} and \ref{sec5}.

Now, we start by considering the following unconstrained optimization problem
$$\Min_{x\in\RR^n} f(x),$$
where $f: \RR^n\rightarrow \RR$ is a differentiable function achieving its minimum $\min f$ so that $\mathrm{Arg}\min f\neq \emptyset$. Note that $\mathrm{Arg}\min f$ is closed since $f$ is differentiable. For any $x\in\RR^n$, the set of its projection points onto $\mathrm{Arg}\min f$, denoted by $\cY_f(x)$, is nonempty.
Let $\{x_k\}_{k\geq 0}$ be generated by the gradient descent method
\begin{equation}\label{gradmethod}
x_{k+1}=x_k-h\cdot\nabla f(x_k), ~k\geq 0,
\end{equation}
 where $h>0$ is a constant step size.
Observe that  $d(x_k,\mathrm{Arg}\min f)$ measures how close $x_k$ is to $\mathrm{Arg}\min f$,  and the ratio of $d(x_{k+1},\mathrm{Arg}\min f)$ to $d(x_k,\mathrm{Arg}\min f)$ measures how fast $x_k$ converges to $\mathrm{Arg}\min f$.  Now, we analyze the ratio of $d(x_{k+1},\mathrm{Arg}\min f)$ to $d(x_k,\mathrm{Arg}\min f)$ as follows
 \begin{align*}
d^2(x_{k+1},\mathrm{Arg}\min f) &= \|x_{k+1}-x_{k+1}^\prime\|^2\leq \|x_{k+1}-x_k^\prime\|^2  \\
  &=  \|x_k-h\cdot\nabla f(x_k)-x_k^\prime\|^2 \\
  & = d^2(x_k,\mathrm{Arg}\min f) -2h\langle \nabla f(x_k),x_k-x_k^\prime\rangle+h^2\|\nabla f(x_k)\|^2,
\end{align*}
 where $x_{k+1}^\prime\in \cY_f(x_{k+1})$ and $x_{k}^\prime\in \cY_f(x_{k})$. To ensure gradient descent to converge linearly in the following sense:
 \begin{equation}\label{linearconv}
d^2(x_{k+1},\mathrm{Arg}\min f) \leq \tau\cdot d^2(x_k,\mathrm{Arg}\min f),~k\geq 0.
\end{equation}
 it suffices to require that for $k\geq 0, x_k^\prime\in \cY_f(x_{k}),$
 $$d^2(x_k,\mathrm{Arg}\min f) -2h\langle \nabla f(x_k),x_k-x_k^\prime\rangle+h^2\|\nabla f(x_k)\|^2\leq \tau\cdot d^2(x_k,\mathrm{Arg}\min f),$$
 i.e.,
\begin{equation}\label{basic}
\inf_{u\in \cY_f(x^{k})} \langle\nabla f(x_k),x_k-u\rangle \geq \frac{1-\tau}{2h}d^2(x_k,\mathrm{Arg}\min f)+\frac{h}{2}\|\nabla f(x_k)\|^2, ~k\geq0.
\end{equation}
It turns out that this sufficient condition is also necessary when the objective function $f$ belongs to $\cF_L^{1,1}(\RR^n)$ and the step size $h$ lies in some interval.

\begin{proposition} \label{necesuff}
Let $f$ be a differentiable function on $\RR^n$ achieving its minimum $\min f$ so that $\mathrm{Arg}\min f\neq \emptyset$, and let $h>0$ and $\tau\in (0,1)$.
\begin{enumerate}
  \item[(i)] If the condition  \eqref{basic} holds, then the sequence $\{x_k\}_{k\geq0}$ generated by the gradient descent method \eqref{gradmethod} must converge linearly in the sense of \eqref{linearconv}.

  \item[(ii)] Let $f(x)\in \cF_L^{1,1}(\RR^n)$.  If the sequence $\{x_k\}$ generated by the gradient descent method \eqref{gradmethod} with $0< h\leq \frac{1-\sqrt{\tau}}{L}$ converges linearly as \eqref{linearconv},
 then the  condition \eqref{basic} must hold.
\end{enumerate}
\end{proposition}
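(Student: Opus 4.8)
The plan is to treat the two implications separately, since part (i) is essentially already contained in the computation preceding the statement, whereas part (ii) is where the real work lies.

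For part (i), I would simply reread the chain of (in)equalities derived just before the proposition. Taking $x_k' \in \cY_f(x_k)$ and $x_{k+1}' \in \cY_f(x_{k+1})$, the projection inequality $\|x_{k+1} - x_{k+1}'\| \leq \|x_{k+1} - x_k'\|$ yields $d^2(x_{k+1}, \mathrm{Arg}\min f) \leq d^2(x_k, \mathrm{Arg}\min f) - 2h\langle \nabla f(x_k), x_k - x_k'\rangle + h^2\|\nabla f(x_k)\|^2$. Since $x_k' \in \cY_f(x_k)$, condition \eqref{basic} bounds $\langle \nabla f(x_k), x_k - x_k'\rangle$ from below by $\frac{1-\tau}{2h}d^2(x_k, \mathrm{Arg}\min f) + \frac{h}{2}\|\nabla f(x_k)\|^2$; substituting this cancels the $h^2\|\nabla f(x_k)\|^2$ term and leaves exactly $\tau\, d^2(x_k, \mathrm{Arg}\min f)$, which is \eqref{linearconv}.

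For part (ii), the obstacle is a mismatch of reference points: the assumed rate \eqref{linearconv} controls $d(x_{k+1}, \mathrm{Arg}\min f)$, the distance from $x_{k+1}$ to its \emph{nearest} minimizer, whereas \eqref{basic} demands a lower bound on $\langle \nabla f(x_k), x_k - u\rangle$ uniformly over all projections $u \in \cY_f(x_k)$ of the \emph{previous} iterate $x_k$, and these need not be attained at the same minimizer. My key step to bridge this gap is to extract a lower bound on the gradient norm rather than trying to plug a projection of $x_k$ into the rate hypothesis. Writing $x_{k+1}' \in \cY_f(x_{k+1})$, the fact that $x_{k+1}' \in \mathrm{Arg}\min f$ gives $d(x_k, \mathrm{Arg}\min f) \leq \|x_k - x_{k+1}'\|$, and the triangle inequality together with $\|x_k - x_{k+1}\| = h\|\nabla f(x_k)\|$ and $d(x_{k+1}, \mathrm{Arg}\min f) \leq \sqrt{\tau}\, d(x_k, \mathrm{Arg}\min f)$ yields
\[
\|\nabla f(x_k)\| \geq \frac{1-\sqrt{\tau}}{h}\, d(x_k, \mathrm{Arg}\min f).
\]

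With this in hand I would invoke the co-coercivity (Baillon--Haddad) inequality, a standard property of $f \in \cF_L^{1,1}(\RR^n)$: since every $u \in \cY_f(x_k) \subseteq \mathrm{Arg}\min f$ satisfies $\nabla f(u) = 0$, we obtain $\langle \nabla f(x_k), x_k - u\rangle \geq \frac{1}{L}\|\nabla f(x_k)\|^2$ for \emph{every} such $u$, hence for the infimum. It then remains to verify the purely algebraic inequality $\frac{1}{L}\|\nabla f(x_k)\|^2 \geq \frac{1-\tau}{2h}d^2(x_k, \mathrm{Arg}\min f) + \frac{h}{2}\|\nabla f(x_k)\|^2$. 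Feeding in the gradient lower bound, factoring $1-\tau = (1-\sqrt{\tau})(1+\sqrt{\tau})$, and clearing denominators (legitimate because $h < 2/L$ makes $\frac1L - \frac h2 > 0$), this reduces to $(2 - Lh)(1-\sqrt{\tau}) \geq (1+\sqrt{\tau})Lh$, i.e. to $1 - \sqrt{\tau} - Lh \geq 0$, which is precisely the step-size budget $h \leq \frac{1-\sqrt{\tau}}{L}$. I expect the main subtlety to be exactly this routing through the projection of $x_{k+1}$ to produce a gradient lower bound, after which co-coercivity and the step-size restriction close the estimate cleanly.
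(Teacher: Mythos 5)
Your proposal is correct and follows essentially the same route as the paper: part (i) is the computation preceding the statement, and part (ii) combines the triangle-inequality bound $(1-\sqrt{\tau})\,d(x_k,\mathrm{Arg}\min f)\leq h\|\nabla f(x_k)\|$ (obtained exactly as in the paper via a projection of $x_{k+1}$) with the co-coercivity inequality $\langle\nabla f(x_k),x_k-u\rangle\geq\frac{1}{L}\|\nabla f(x_k)\|^2$ from Theorem 2.1.5 of Nesterov. Your algebraic reduction to $1-\sqrt{\tau}-Lh\geq 0$ is just a repackaging of the paper's splitting of $\frac{1}{L}$ into $\frac{\alpha}{L}+\frac{\beta}{L}$ with $\alpha+\beta=\frac{Lh}{1-\sqrt{\tau}}\leq 1$, so the two arguments coincide.
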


\begin{proof}
  The proof of sufficiency has been done. We now show the necessity part. Pick $u_{k+1}\in \cY_f(x_{k+1})$ to derive that
   \begin{align}\label{add1}
  d(x_k, \mathrm{Arg}\min f) &\leq \|x_k-u_{k+1}\| \leq \|x_{k+1}-u_{k+1}\|+\|x_{k+1}-x_k\| \nonumber \\
  &= d(x_{k+1}, \mathrm{Arg}\min f)+ h\|\nabla f(x_k)\|, ~k\geq0.
   \end{align}
 Combine \eqref{add1} and the fact of linear convergence  $$d(x_{k+1},\mathrm{Arg}\min f) \leq \sqrt{\tau} \cdot d(x_{k},\mathrm{Arg}\min f), ~k\geq0$$ to obtain
  \begin{equation}\label{rev1}
  (1-\sqrt{\tau})d(x_k, \mathrm{Arg}\min f) \leq h\|\nabla f(x_k)\|, ~k\geq0.
  \end{equation}
According to Theorem 2.1.5 in \cite{nesterov2004introductory}, we know that $f(x)\in \cF_L^{1,1}(\RR^n)$ implies
      $$ \langle\nabla f(x_k),x_k-v_k\rangle \geq \frac{1}{L}\|\nabla f(x_k)\|^2, ~  v_k\in\cY_f(x_{k}), ~k\geq0.$$
By letting $\alpha+\beta\leq 1$ and $\alpha,\beta>0$, we have that for any $ v_k\in\cY_f(x_{k})$,
 \begin{align*}
  \langle\nabla f(x_k),x_k-v_k\rangle &\geq \frac{\alpha}{L}\|\nabla f(x_k)\|^2+\frac{\beta}{L}\|\nabla f(x_k)\|^2\\
  &\geq \frac{\alpha}{L}\|\nabla f(x_k)\|^2+\frac{\beta(1-\sqrt{\tau})^2}{Lh^2}d(x_k, \mathrm{Arg}\min f)^2, ~k\geq0,
   \end{align*}
   where the last inequality follows by \eqref{rev1}.
Thus, by letting $\frac{\alpha}{L}=\frac{h}{2}$ and $\frac{\beta(1-\sqrt{\tau})^2}{Lh^2}=\frac{1-\tau}{2h}$, we get the condition \eqref{basic}. At last, we need $$ \alpha+\beta=\frac{Lh}{2}+\frac{Lh(1-\tau)}{2(1-\sqrt{\tau})^2}=\frac{hL}{1-\sqrt{\tau}}\leq 1,$$ which forces $h\leq\frac{1-\sqrt{\tau}}{L}$. This completes the proof.
\end{proof}

The condition \eqref{basic} means that if the steepest descent direction $-\nabla f(x)$ is well correlated to any direction towards optimality $u-x$, where $u\in \cY_f(x)$, then a linear convergence rate of the gradient descent method can be ensured. Conversely, when $f(x)\in \cF_L^{1,1}(\RR^n)$ and if the gradient descent converges linearly and the step size lies in the interval $(0, \frac{1-\sqrt{\tau}}{L}]$, then $-\nabla f(x)$ must be well correlated to $u-x$. Now, we list some direct applications of this basic observation.

In our first illustrating example, we consider functions in $S_{\mu, L}^{1,1}(\RR^n)$. First, we introduce an important property about this type of functions.
\begin{lemma}[\cite{nesterov2004introductory}]If $f\in S_{\mu, L}^{1,1}(\RR^n)$, then we have
 $$\quad\forall x, y\in\RR^n,~~ \langle \nabla f(x)-\nabla f(y), x-y\rangle\geq \frac{\mu L}{\mu+L} \|x-y\|^2+\frac{1}{\mu+L}\|\nabla f(x)-\nabla f(y)\|^2.$$
 \end{lemma}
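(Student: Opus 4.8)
The plan is to reduce the asserted inequality to the co-coercivity estimate for convex gradient-Lipschitz functions, namely $\langle \nabla\phi(x)-\nabla\phi(y), x-y\rangle \geq \frac{1}{L'}\|\nabla\phi(x)-\nabla\phi(y)\|^2$ (Theorem 2.1.5 in \cite{nesterov2004introductory}, already invoked in the proof above), applied to a quadratically shifted function. First I would introduce the auxiliary function $\phi(x):=f(x)-\frac{\mu}{2}\|x\|^2$. Since $f\in\cS^{1,1}_{\mu,L}(\RR^n)$, subtracting the quadratic exactly cancels the strong-convexity modulus, so $\phi$ is convex; simultaneously the gradient-Lipschitz inequality for $f$ with modulus $L$ passes to $\phi\in\cF^{1,1}_{L-\mu}(\RR^n)$, which follows by comparing the defining inequalities of the two classes. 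Note that $\nabla\phi(x)=\nabla f(x)-\mu x$.

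Second, I would apply co-coercivity to $\phi$ with Lipschitz modulus $L-\mu$. Writing $g:=\nabla f(x)-\nabla f(y)$ and $s:=x-y$, the resulting inequality reads
$$\langle g, s\rangle - \mu\|s\|^2 \geq \frac{1}{L-\mu}\,\|g-\mu s\|^2.$$
Third, I expand the right-hand side as $\frac{1}{L-\mu}\bigl(\|g\|^2-2\mu\langle g,s\rangle+\mu^2\|s\|^2\bigr)$, clear the positive denominator $L-\mu$, and collect terms. The coefficient of $\langle g,s\rangle$ becomes $L+\mu$ and the $\|s\|^2$ contributions combine into $\mu L\|s\|^2$, giving $(L+\mu)\langle g,s\rangle \geq \|g\|^2+\mu L\|s\|^2$, which is exactly the claim after dividing by $L+\mu$. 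This step is purely algebraic and presents no difficulty.

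The only genuine obstacle is the degenerate case $\mu=L$, where the shift makes $\phi$ have zero Lipschitz constant and the division by $L-\mu$ is illegitimate. I would handle it separately: a convex function with $0$-Lipschitz gradient has a constant gradient, so $\nabla f(x)=\mu x + c$ and $f$ is a quadratic. Then $\langle g,s\rangle=\mu\|s\|^2$, while the right-hand side of the claim equals $\frac{1}{2\mu}\|g\|^2+\frac{\mu L}{2\mu}\|s\|^2=\frac{\mu}{2}\|s\|^2+\frac{\mu}{2}\|s\|^2=\mu\|s\|^2$, so equality holds and the inequality is verified. Alternatively, since $f$ also belongs to $\cS^{1,1}_{\mu,L'}(\RR^n)$ for every $L'>\mu$, one may apply the non-degenerate case with modulus $L'$ and let $L'\downarrow\mu$, the two sides converging by continuity to the $\mu=L$ identity. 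Either route closes the argument.
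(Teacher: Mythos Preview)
Your argument is correct and is in fact the standard proof of this inequality (Theorem~2.1.12 in \cite{nesterov2004introductory}): shift by $\frac{\mu}{2}\|\cdot\|^2$, apply co-coercivity to the resulting function in $\cF^{1,1}_{L-\mu}(\RR^n)$, and rearrange. Your justification that $\phi\in\cF^{1,1}_{L-\mu}(\RR^n)$ is a bit brisk---the Lipschitz bound on $\nabla\phi$ does not follow from the triangle inequality directly---but it does follow from the equivalent descent-lemma characterization of $\cF^{1,1}_{L'}$ for convex functions (again Theorem~2.1.5 in \cite{nesterov2004introductory}), which is what you seem to have in mind. The degenerate case $\mu=L$ is handled cleanly.

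Note, however, that the paper does \emph{not} supply its own proof of this lemma: it is quoted verbatim from \cite{nesterov2004introductory} and used as a black box. So there is nothing in the paper to compare against; your write-up simply reproduces the original source's argument.
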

Let $x^*$ be the unique minimizer of $f\in S_{\mu, L}^{1,1}(\RR^n)$; then $\mathrm{Arg}\min f=\{x^*\}$.  Using the inequality above with $x=x_k, y=x^*$ and noting that $\nabla f(x^*)=0$ and $\|x_k-x^*\|= d(x_k,\mathrm{Arg}\min f)$, we obtain
 $$\langle\nabla f(x_k),x_k-x^*\rangle \geq \frac{\mu L}{\mu+L}d^2(x_k,\mathrm{Arg}\min f)+\frac{1}{\mu+L}\|\nabla f(x_k)\|^2, ~k\geq0.$$
 To guarantee the condition \eqref{basic}, we only need
 $$\frac{\mu L}{\mu+L}\geq \frac{1-\tau}{2h}~~\textrm{and}~~\frac{1}{\mu+L}\geq \frac{h}{2},$$
 which implies that
 $$ \frac{(1-\tau)(\mu +L)}{2\mu L}\leq h\leq \frac{2}{\mu+L},~~ \tau \geq \tau_0:=(\frac{L-\mu}{L+\mu})^2.$$
 The optimal linear convergence rate $\tau_0$ can be obtained by setting $h=\frac{2}{\mu+L}$. This gives the corresponding result in Nesterov's book; see Theorem 2.1.15 in \cite{nesterov2004introductory}.

In our second illustrating example, we consider RSC functions \cite{zhang2013gradient,zhang2015restricted}. The following property can be viewed as a convex combination of the restricted strong convexity and the gradient-Lipschitz-continuity property; see Lemma 3 in \cite{zhang2015restricted}.
\begin{lemma}[\cite{zhang2015restricted}]\label{comb}
If $f\in \cF^{1,1}_L(\RR^n)$ and $f$ is RSC with $0<\nu< L$, then for every $\theta\in [0,1]$ it holds:
\begin{equation*}
\quad\forall x \in\RR^n,~~ \langle\nabla f(x), x-{x^\prime}\rangle \geq \frac{\theta}{L}\|\nabla f(x)\|^2+ (1-\theta)\nu d^2(x,\mathrm{Arg}\min f),
\end{equation*}
where $x^\prime$ is the unique projection point of $x$ onto $\mathrm{Arg}\min f$ since $\mathrm{Arg}\min f$ is a nonempty closed convex set.
\end{lemma}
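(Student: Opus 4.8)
The plan is to realize the claimed inequality as the $\theta$-weighted convex combination of two extremal estimates, one encoding the gradient-Lipschitz-continuity of $f$ and the other encoding its restricted strong convexity. Since the coefficients $\theta$ and $1-\theta$ sum to one, the two copies of the inner product $\langle\nabla f(x),x-x'\rangle$ appearing on the left will collapse into a single term, and the right-hand sides will add to exactly the asserted bound. Thus the whole argument reduces to establishing the two endpoint cases corresponding to $\theta=1$ and $\theta=0$.

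First I would record the endpoint $\theta=1$. Because $x'\in\mathrm{Arg}\min f$ minimizes the differentiable convex function $f$, we have $\nabla f(x')=0$. Applying the co-coercivity characterization of $\cF^{1,1}_L(\RR^n)$ (Theorem 2.1.5 in \cite{nesterov2004introductory}) to the pair $(x,x')$ and substituting $\nabla f(x')=0$ gives
\begin{equation*}
\langle\nabla f(x),x-x'\rangle\geq \frac{1}{L}\|\nabla f(x)\|^2,
\end{equation*}
which is precisely the claim at $\theta=1$.

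Next I would record the endpoint $\theta=0$. By the definition of the restricted strong convexity with parameter $\nu$, evaluated at the projection point $x'$, and again using $\nabla f(x')=0$, we obtain
\begin{equation*}
\langle\nabla f(x),x-x'\rangle\geq \nu\|x-x'\|^2=\nu\, d^2(x,\mathrm{Arg}\min f),
\end{equation*}
where the last equality holds because $x'$ is the unique projection of $x$ onto the nonempty closed convex set $\mathrm{Arg}\min f$, so that $\|x-x'\|=d(x,\mathrm{Arg}\min f)$.

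Finally I would multiply the first inequality by $\theta$ and the second by $1-\theta$ and add them. On the left the two terms combine into the single inner product $\langle\nabla f(x),x-x'\rangle$, while the right becomes $\frac{\theta}{L}\|\nabla f(x)\|^2+(1-\theta)\nu\, d^2(x,\mathrm{Arg}\min f)$, exactly the stated bound; the boundary values $\theta\in\{0,1\}$ are handled directly by the two displayed inequalities. There is essentially no hard step in this proof: once the two endpoint estimates are in place, the combination is purely mechanical. The only points demanding any care are the verification that $\nabla f(x')=0$ and that the projection is single-valued, both of which follow immediately from $f$ being differentiable and convex with $\mathrm{Arg}\min f$ nonempty, closed, and convex.
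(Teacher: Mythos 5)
Your proof is correct and follows exactly the route the paper indicates: the paper states this lemma without proof, citing Lemma~3 of \cite{zhang2015restricted}, and describes it precisely as ``a convex combination of the restricted strong convexity and the gradient-Lipschitz-continuity property,'' which is what you carry out. The two endpoint inequalities (co-coercivity from $f\in\cF^{1,1}_L(\RR^n)$ with $\nabla f(x')=0$, and the RSC/RSI inequality at the projection point) are both invoked correctly, and the $\theta$-weighted combination is immediate.
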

Similarly, to guarantee the condition \eqref{basic} , we only need
 $$(1-\theta)\nu \geq \frac{1-\tau}{2h}~~\textrm{and}~~\frac{\theta}{L}\geq \frac{h}{2},$$
 which implies that
 $$\frac{1-\tau}{2(1-\theta)\nu}\leq h\leq \frac{2\theta}{L},~~ \tau \geq 1- \frac{4\theta(1-\theta)\nu}{L}\geq 1-\frac{\nu}{L}.$$
The optimal linear convergence rate $1-\frac{\nu}{L}$ can be obtained at $\theta=\frac{1}{2}$ and $h=\frac{1}{L}$. This gives the corresponding result in \cite{zhang2015restricted}. The argument here is much simpler than that previously employed to derive the same result; see the proof of Theorem 2 in \cite{zhang2015restricted}.

The last example to be illustrated is a nonconvex minimization. The following definition can be viewed as a local version of Lemma \ref{comb}. Therefore, it is not difficult to predict a local linear convergence under such property.
\begin{definition}[Regularity Condition, \cite{Candes2015Phase}]  Let $\cN$ be a neighborhood of $\mathrm{Arg}\min f$ and let $\alpha, \beta>0$. We say that $f$ satisfies the regularity condition if
 $$ \quad \forall x\in\cN,~~\inf_{u\in\cY_f(x)}\langle \nabla f(x), x-u\rangle\geq \frac{1}{\alpha} d^2(x,\mathrm{Arg}\min f)+ \frac{1}{\beta}\|\nabla f(x)\|^2.$$
 \end{definition}
 Again, to guarantee the condition \eqref{basic} locally, we only need
 $$\frac{1}{\alpha}\geq \frac{1-\tau}{2h}~~\textrm{and}~~\frac{1}{\beta}\geq \frac{h}{2},$$
 which implies that
 $$\frac{(1-\tau)\alpha}{2}\leq h\leq\frac{2}{\beta},~~ \tau \geq \tau_0:=(1-\frac{4}{\alpha\beta}).$$
 The optimal linear convergence rate $\tau_0$ can be obtained by setting $h=\frac{2}{\beta}$ and assuming $\alpha\beta>4$. The latter can be guaranteed usually; see e.g. the argument below Lemma 7.10 in \cite{Candes2015Phase}.
Therefore, we obtain the corresponding result in \cite{Candes2015Phase}.  Regularity condition provably holds for nonconvex  optimization problems that appear in phase retrieval and low-rank matrix recovery;  interested readers can refer to \cite{Candes2015Phase} and \cite{Tu2016Low} for details.

Observe that the right-hand side of \eqref{basic} has two terms. In order to better analyze such condition, we decompose it into two parts:
 \begin{align*}
   &\inf_{u\in \cY_f(x^{k})} \langle\nabla f(x_k),x_k-u\rangle  \geq \theta_1\cdot d^2(x_k,\mathrm{Arg}\min f), \\
   &\inf_{u\in \cY_f(x^{k})} \langle\nabla f(x_k),x_k-u\rangle  \geq \theta_2\cdot\|\nabla f(x_k)\|^2,
\end{align*}
where $\theta_i, i=1,2$ are some positive parameters. This idea of separating the right-hand side of \eqref{basic} partially inspires us to consider new and abstract error bound conditions, which are the main content of the next section.

\section{Abstract EB conditions: definition and interplay}\label{sec4}
This section is divided into two parts. In the first part, we define a group of EB conditions in a unified and abstract way. In the second part, we discuss some interplay between them, along with new connections between many existing EB conditions.

\subsection{Definition of abstract EB conditions}
The concept of residual measure operator, given by the following definition, will play a key role in the forthcoming theory.
\begin{definition}
Let $\varphi\in \Gamma(\RR^n)$ and $X\subset \RR^n$. We say that $G_\varphi: X \rightarrow \RR^n$ is a residual measure operator related to $\varphi$ and $X$, if it satisfies
$$\{x\in X: G_{\varphi}(x)=0\}=\crit \varphi.$$
Especially, if we further assume that $\varphi$ is convex, the above condition can be written as
$$\{x\in X: G_{\varphi}(x)=0\}=\mathrm{Arg}\min \varphi.$$
\end{definition}

Now, we define a group of abstract EB conditions.
\begin{definition}\label{maindef}Let $\varphi\in \Gamma(\RR^n)$ be such that it achieves its minimum $\min \varphi$ and that its critical point set $\crit \varphi$ is nonempty and closed. Let $X\subset \RR^n$, $\Omega\subset X$, and $G_\varphi$ be a residual measure operator related to $\varphi$ and $X$. Define the projection operator $\mathcal{P}_\varphi: \RR^n\rightrightarrows \RR^n$  onto $\crit \varphi$ by:
$$\mathcal{P}_\varphi(x):={\mathrm{Arg}\min}_{u\in\crit \varphi}\|x-u\|.$$
We call $d(x,\crit \varphi)$ point value error, $\varphi(x)-\min \varphi $ objective value error, $\|G_{\varphi}(x)\|$ residual value error, and $\inf_{x_p\in \mathcal{P}_\varphi(x)}\langle G_{\varphi}(x), x-x_p\rangle$ least correlated error.
With these optimality measures, we say that
\begin{enumerate}
  \item $\varphi$ satisfies the residual-point value EB condition with operator $G_\varphi$ and constant $\kappa>0$ on $\Omega$, abbreviated $(G_\varphi, \kappa, \Omega)$-(res-EB) condition,  if:
   \begin{equation}\label{res-EB}
\tag{res-EB} \forall~ x\in \Omega \cap  \dom \varphi,~~\|G_{\varphi}(x)\| \geq \kappa\cdot d(x,\crit \varphi);
\end{equation}
  \item $\varphi$ satisfies the correlated-point value EB condition with operator $G_\varphi$ and constant $\nu>0$ on $\Omega$, abbreviated $(G_\varphi, \nu, \Omega)$-(cor-EB) condition, if:
   \begin{equation}\label{cor-EB}
\tag{cor-EB}  \forall~ x\in \Omega \cap  \dom \varphi,~~\inf_{x_p\in \mathcal{P}_\varphi(x)}\langle G_{\varphi}(x), x-x_p\rangle \geq \nu\cdot d^2(x,\crit \varphi);
\end{equation}
  \item $\varphi$ satisfies the objective-point value EB condition with constant $\alpha>0$ on $\Omega$, abbreviated $(\varphi, \alpha, \Omega)$-(obj-EB) condition, if:
  \begin{equation}\label{obj-EB}
\tag{obj-EB}  \forall~ x\in \Omega \cap  \dom \varphi,~~\varphi(x)- \min \varphi  \geq \frac{\alpha}{2}\cdot d^2(x,\crit \varphi);
\end{equation}
    \item $\varphi$ satisfies the residual-objective value EB condition with operator $G_\varphi$ and constant $\eta>0$ on $\Omega$, abbreviated $(G_\varphi, \eta, \Omega)$-(res-obj-EB) condition, if:
   \begin{equation}\label{res-obj-EB}
\tag{res-obj-EB}  \forall~ x\in \Omega \cap  \dom \varphi,~~\|G_{\varphi}(x)\| \geq \eta\cdot\sqrt{\varphi(x)- \min \varphi  };
\end{equation}
\item $\varphi$ satisfies the correlated-residual value EB condition with operator $G_\varphi$ and constant $\beta>0$ on $\Omega$, abbreviated $(G_\varphi, \beta, \Omega)$-(cor-res-EB) condition, if:
   \begin{equation}\label{cor-res-EB}
\tag{cor-res-EB}  \forall~ x\in \Omega \cap  \dom \varphi,~~\inf_{x_p\in \mathcal{P}_\varphi(x)}\langle G_{\varphi}(x), x-x_p\rangle \geq \beta\cdot \|G_{\varphi}(x)\|^2;
\end{equation}
\item $\varphi$ satisfies the correlated-objective value EB condition with operator $G_\varphi$ and constant $\omega>0$ on $\Omega$, abbreviated $(G_\varphi, \omega, \Omega)$-(cor-obj-EB) condition,  if:
   \begin{equation}\label{cor-obj-EB}
\tag{cor-obj-EB}  \forall~ x\in \Omega \cap  \dom \varphi,~~\inf_{x_p\in \mathcal{P}_\varphi(x)}\langle G_{\varphi}(x), x-x_p\rangle \geq \omega\cdot(\varphi(x)- \min \varphi ).
\end{equation}
\end{enumerate}
We will refer to these EB conditions as global if $\Omega=\RR^n$. For global EB conditions, we will omit $\Omega$ for simplicity.
\end{definition}
In order to gain some intuition of the abstract EB conditions, we point out their correspondences to existing notions:  \eqref{res-EB} corresponds to the EB condition of Hoffman's type \cite{Luo1990On,Drusvyatskiy2016Error,Zhou2015A}, \eqref{res-obj-EB} to the Polyak-{\L}ojasiewicz's type \cite{Bolte2015From,Karimil2016linear}, \eqref{obj-EB} to the quadratic growth condition \cite{Bolte2015From,Drusvyatskiy2016Error}, \eqref{cor-EB} to the RSI's type \cite{zhang2013gradient}, and \eqref{cor-obj-EB} to the subgradient inequality for convex functions.   The \eqref{cor-res-EB} condition, which will be used in Section \ref{sec5}, is a relaxation of the following property:
$$\quad\forall x, y\in \RR^n,~~\langle \nabla \varphi(x)-\nabla \varphi(y), x-y\rangle\geq \frac{1}{L}\|\nabla \varphi(x)-\nabla \varphi(y)\|^2,$$
which is equivalent to $\varphi\in\cF^{1,1}_L(\RR^n)$; see Theorem 2.1.5 in \cite{nesterov2004introductory}.

In our early manuscript \cite{zhang2016new}, we only roughly gave global EB conditions in Definition \ref{maindef}. The above was obtained by incorporating the referee's comments and was influenced by the recent work \cite{Garrigos2017conv}, resulting in a much more complete list than the previous one.

\subsection{Interplay between the EB conditions}

We first show the interplay between the abstract EB conditions. The proof of equivalence will rely  heavily on a technical result developed in \cite{Bolte2015From}.
\begin{theorem}\label{mainresult}
Let $\varphi\in \Gamma(\RR^n)$ be such that it achieves its minimum $\min \varphi$  and that $\crit \varphi$ is nonempty and closed. Let $X\subset \RR^n$, $\Omega\subset X$, and $G_\varphi$ be a residual measure operator related to $\varphi$ and $X$. Assume that the $(G_\varphi, \omega, \Omega)$-(cor-obj-EB) condition holds. Then, we have the following implications
$$ \eqref{obj-EB}\Rightarrow \eqref{cor-EB}\Rightarrow \eqref{res-EB}\Rightarrow \eqref{res-obj-EB}.$$
One can take $\nu=\frac{\alpha\omega}{2}, \kappa=\nu, \eta=\sqrt{\kappa\omega}$ to show above implications. If we further assume that $\varphi\in \Gamma_0(\RR^n)$, $\Omega$ is $\partial \varphi$-invariant, and $G_\varphi$ satisfies
\begin{equation}\label{assum1}
  \forall~ x\in \Omega \cap  \dom \varphi,~~ \|G_{\varphi}(x)\|\leq \inf_{g\in\partial \varphi(x)}\|g\|,
\end{equation}
then we have the following equivalent relationship
$$\eqref{obj-EB}\Leftrightarrow \eqref{cor-EB}\Leftrightarrow \eqref{res-EB}\Leftrightarrow \eqref{res-obj-EB}.$$
For $\eqref{res-obj-EB}\Rightarrow\eqref{obj-EB}$, one can take $\alpha=\frac{1}{2}\eta^2$.
\end{theorem}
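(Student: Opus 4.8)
The plan is to dispatch the three forward implications by elementary manipulation of the defining inequalities, keeping the standing hypothesis \eqref{cor-obj-EB} as the common glue, and to reserve the subgradient-flow technique of \cite{Bolte2015From} for the single reverse implication that upgrades the chain into a full equivalence. For \eqref{obj-EB}$\Rightarrow$\eqref{cor-EB} I would simply chain \eqref{cor-obj-EB} with \eqref{obj-EB}: for $x\in\Omega\cap\dom\varphi$,
$$\inf_{x_p\in\mathcal{P}_\varphi(x)}\langle G_\varphi(x),x-x_p\rangle\geq\omega(\varphi(x)-\min\varphi)\geq\frac{\alpha\omega}{2}d^2(x,\crit\varphi),$$
which is \eqref{cor-EB} with $\nu=\frac{\alpha\omega}{2}$. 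The engine for the next two steps is the Cauchy--Schwarz bound: since every $x_p\in\mathcal{P}_\varphi(x)$ realizes $\|x-x_p\|=d(x,\crit\varphi)$, we have $\langle G_\varphi(x),x-x_p\rangle\leq\|G_\varphi(x)\|\,d(x,\crit\varphi)$, so the same upper bound passes to the infimum over $x_p$.

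Feeding this bound into \eqref{cor-EB} gives $\nu\,d^2(x,\crit\varphi)\leq\|G_\varphi(x)\|\,d(x,\crit\varphi)$; dividing by $d(x,\crit\varphi)$ (the case $d(x,\crit\varphi)=0$ being trivial) yields \eqref{res-EB} with $\kappa=\nu$. For \eqref{res-EB}$\Rightarrow$\eqref{res-obj-EB} I would again combine the Cauchy--Schwarz bound with \eqref{cor-obj-EB} to get $\omega(\varphi(x)-\min\varphi)\leq\|G_\varphi(x)\|\,d(x,\crit\varphi)$, and then substitute $d(x,\crit\varphi)\leq\frac{1}{\kappa}\|G_\varphi(x)\|$ from \eqref{res-EB}. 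This produces $\|G_\varphi(x)\|^2\geq\kappa\omega(\varphi(x)-\min\varphi)$, i.e. \eqref{res-obj-EB} with $\eta=\sqrt{\kappa\omega}$, matching the stated constants.

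The substantive part is the reverse implication \eqref{res-obj-EB}$\Rightarrow$\eqref{obj-EB} under the extra hypotheses. First I would promote \eqref{res-obj-EB} to a Polyak--{\L}ojasiewicz inequality for the subgradient: by \eqref{assum1}, $\|\partial^0\varphi(x)\|\geq\|G_\varphi(x)\|\geq\eta\sqrt{\varphi(x)-\min\varphi}$ on $\Omega\cap\dom\varphi$. Then, fixing $x\in\Omega\cap\dom\varphi$, I would run the curve-length argument of \cite{Bolte2015From} on the subgradient curve $\chi_x$ with $\chi_x(0)=x$ and $\dot\chi_x(t)\in-\partial\varphi(\chi_x(t))$, which satisfies $\dot\chi_x(t)=-\partial^0\varphi(\chi_x(t))$ and the chain rule $\frac{d}{dt}\varphi(\chi_x(t))=-\|\dot\chi_x(t)\|^2$ for a.e. $t>0$. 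Setting $p(t)=\sqrt{\varphi(\chi_x(t))-\min\varphi}$, the chain rule together with the PL inequality gives $\dot p(t)\leq-\frac{\eta}{2}\|\dot\chi_x(t)\|$ wherever $p(t)>0$; integrating bounds the total length by $\int_0^\infty\|\dot\chi_x(t)\|\,dt\leq\frac{2}{\eta}\sqrt{\varphi(x)-\min\varphi}$. The PL inequality also forces $\varphi(\chi_x(t))\to\min\varphi$, so the finite-length curve converges to some $x_\infty$ with $\varphi(x_\infty)=\min\varphi$, whence $x_\infty\in\crit\varphi$ and $d(x,\crit\varphi)\leq\|x-x_\infty\|\leq\frac{2}{\eta}\sqrt{\varphi(x)-\min\varphi}$. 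Squaring gives \eqref{obj-EB} with $\alpha=\frac{1}{2}\eta^2$, closing the loop.

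I expect the reverse implication to be the only genuine obstacle, and inside it two points demand care. The $\partial\varphi$-invariance of $\Omega$ is indispensable: it guarantees $\chi_x(t)\in\Omega$ for a.e. $t$, so the PL inequality---valid only on $\Omega\cap\dom\varphi$---may legitimately be invoked all along the trajectory rather than merely at its initial point. The second delicate point is justifying the chain rule and the convergence of the curve to a minimizer; here convexity ($\varphi\in\Gamma_0(\RR^n)$) and the differentiability-along-trajectories results of \cite{Brezis1973Op} underpinning \cite{Bolte2015From} do the work, and one must treat the division by $p(t)$ and the values $p(t)=0$ with care (the estimate on $\dot p$ holds while $p>0$, and once $p$ vanishes the curve has already reached $\crit\varphi$). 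The forward implications, by contrast, are purely algebraic and hide no difficulty.
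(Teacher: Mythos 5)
Your proposal is correct and follows essentially the same route as the paper: the three forward implications are obtained by the identical chaining of \eqref{cor-obj-EB} with the respective conditions plus Cauchy--Schwarz (with the same constants $\nu=\frac{\alpha\omega}{2}$, $\kappa=\nu$, $\eta=\sqrt{\kappa\omega}$), and the reverse implication \eqref{res-obj-EB}$\Rightarrow$\eqref{obj-EB} uses the same subgradient-curve length estimate of \cite{Bolte2015From}, with $\partial\varphi$-invariance of $\Omega$ playing exactly the role you identify. Your bookkeeping via $p(t)=\sqrt{\varphi(\chi_x(t))-\min\varphi}$ and the differential inequality $\dot p\leq-\frac{\eta}{2}\|\dot\chi_x\|$ is just a rephrasing of the paper's integration of $\frac{d}{dt}\sqrt{\varphi(\chi_x(t))}$, yielding the same constant $\alpha=\frac{\eta^2}{2}$.
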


\begin{proof}
We prove this theorem by showing the following implications
$$\eqref{obj-EB}\Rightarrow \eqref{cor-EB}\Rightarrow \eqref{res-EB}\Rightarrow \eqref{res-obj-EB} \Rightarrow \eqref{obj-EB}.$$

Firstly, the implication of $\eqref{obj-EB}\Rightarrow \eqref{cor-EB}$ follows from
$$\inf_{x_p\in \mathcal{P}_\varphi(x)}\langle G_{\varphi}(x), x-x_p\rangle \geq \omega\cdot(\varphi(x)-\min{\varphi})\geq \frac{\alpha\omega}{2}\cdot d^2(x,\crit \varphi),$$
where the left inequality is \eqref{cor-obj-EB} and the right one is \eqref{obj-EB}.

Secondly, the implication of $\eqref{cor-EB}\Rightarrow \eqref{res-EB}$ follows from a direct application of the Cauchy-Schwarz inequality to \eqref{cor-EB}.

Thirdly, we show $\eqref{res-EB}\Rightarrow \eqref{res-obj-EB}$. By \eqref{cor-obj-EB} and \eqref{res-EB}, we derive that for $\forall~ x\in \Omega \cap  \dom \varphi$,
 \begin{align*}
 \omega\cdot(\varphi(x)-\min \varphi )
  &\leq  \inf_{x_p\in \mathcal{P}_\varphi(x)}\langle G_{\varphi}(x), x-x_p\rangle \\
  &\leq  \inf_{x_p\in \mathcal{P}_\varphi(x)}\|G_{\varphi}(x)\| \|x-x_p\|=\|G_{\varphi}(x)\|\cdot d(x,\crit \varphi) \\
  &\leq \kappa^{-1}\|G_{\varphi}(x)\|^2.
\end{align*}
Thus, it holds that
$\forall~ x\in \Omega \cap  \dom \varphi, ~~\|G_{\varphi}(x)\| \geq \sqrt{\kappa\omega}\cdot\sqrt{\varphi(x)-\min \varphi},$  which is just \eqref{res-obj-EB}.

At last, we show $\eqref{res-obj-EB} \Rightarrow \eqref{obj-EB}$.  The following is based on an argument used for proving Theorem 27 in \cite{Bolte2015From}. For the sake of completeness, we reproduce that proof in our particular case.  First of all, take $x\in \Omega \cap  \dom \varphi$ and recall that we have additionally assumed $\crit \varphi=\mathrm{Arg}\min \varphi$. Without loss of generality, we assume that $ \min\varphi =0$ and $x\notin \mathrm{Arg}\min \varphi$. According to the result about subgradient curves due to Br$\acute{e}$zis \cite{Brezis1973Op} and Bruck \cite{Bruck1975Asymptotic} and recently used in \cite{Bolte2015From}, we can find the unique absolutely continuous curve $\chi_x:[0, +\infty)\rightarrow \RR^n$ such that $\chi_x(0)=x$ and
$$\dot{\chi}_x(t)\in -\partial \varphi(\chi_x(t))$$
for almost every $t>0$.  Moreover, $\chi_x(t)$ converges to some point $\hat{x}$ in $\mathrm{Arg}\min \varphi$ as $t\rightarrow +\infty$ and the function $t \mapsto  \varphi(\chi_x(t))$ is nonincreasing and
$$\lim_{t\rightarrow +\infty}\varphi(\chi_x(t))=\min  \varphi = 0.$$
By the $\partial \varphi$-invariant property of $\Omega$, we have $\chi_x(t)\in \Omega$ and hence $\chi_x(t)\in \Omega \cap  \dom \varphi$ due to the nonincreasingness of $\varphi(\chi_x(t))$.
Let $$T:=\inf\{t\in [0, +\infty): \varphi(\chi_x(t))=0\}.$$ We claim that $T>0$. Otherwise, $T=0$ and then, by the lower semicontinuity property of $\varphi$, we can derive that
$$ \varphi (x)=\varphi(\chi_x(0))\leq {\lim\inf}_{t\rightarrow 0^+}\varphi(\chi_x(t)) =0.$$
This contradicts  $x\notin \mathrm{Arg}\min \varphi$.
Now, combining \eqref{assum1} and \eqref{res-obj-EB}, we derive that
$$\frac{\|\dot{\chi}_x(t)\|}{\sqrt{\varphi(\chi_x(t))}}\geq \frac{\inf_{g\in \partial \varphi(\chi_x(t))}\|g\|}{\sqrt{\varphi(\chi_x(t))}}\geq
 \frac{\|G_{\varphi}(\chi_x(t))\|}{\sqrt{\varphi(\chi_x(t))}}\geq \eta, ~~\forall t\in [0, T).$$
 Observe that for $p, q\in [0, T)$ with $q\geq p$,
  \begin{align*}
   &\sqrt{\varphi(\chi_x(p))}-\sqrt{\varphi(\chi_x(q))}=\int_q^p \frac{d\sqrt{\varphi(\chi_x(t))}}{dt} dt\\
    =&\frac{1}{2}\int_p^q \left(\varphi(\chi_x(p)) \right)^{-\frac{1}{2}}\|\dot{\chi}_x(t)\|^2 dt=\frac{1}{2} \int_p^q \frac{\|\dot{\chi}_x(t)\|}{\sqrt{\varphi(\chi_x(t))}} \|\dot{\chi}_x(t)\| dt\\
    \geq &\frac{1}{2} \int_p^q \eta \|\dot{\chi}_x(t)\| dt=\frac{\eta}{2}\cdot \textrm{length}(\chi_x(t), p, q)\geq \frac{\eta}{2}\cdot\|\chi_x(p)-\chi_x(q)\|,
\end{align*}
where $\textrm{length}(\chi_x(t), p, q)$ stands for the length of subgradient curve from $p$ to $q$.
By letting $p=0$ and $q\rightarrow +\infty$ if $T=+\infty$ and $q\rightarrow T$ if $T<+\infty$, we obtain
$$\sqrt{\varphi(\chi_x(0))}=\sqrt{\varphi(x)}\geq \frac{\eta}{2}\cdot \|x-\hat{x}\|.$$
Therefore, for $\forall~ x\in \Omega \cap  \dom \varphi$ we always have
$$\varphi(x)- \min \varphi \geq \frac{\eta^2}{4}\cdot \|x-\hat{x}\|^2\geq \frac{\eta^2}{4}\cdot d^2(x,\mathrm{Arg}\min \varphi)=\frac{\eta^2}{4}\cdot d^2(x,\crit \varphi),$$
which implies that  \eqref{obj-EB} with $\alpha=\frac{\eta^2}{2}$ holds. This completes the proof.
\end{proof}

As a direct consequence, we have the following corollary.
\begin{corollary}\label{corr1}
 Let $\varphi\in \Gamma_0(\RR^n)$ be such that its achieves its minimum $\min \varphi$ so that $\mathrm{Arg}\min \varphi \neq \emptyset$. Let $X\subset \RR^n$, $\Omega\subset X$ be  $\partial \varphi$-invariant, and $G^i_\varphi, i=1, 2$ be two different residual measure operators related to the same function $\varphi$ and the same subset $X$. We assume that $G^i_\varphi, i=1, 2$ satisfy
\begin{equation}\label{assum10}
   \forall~ x\in \Omega \cap  \dom \varphi,~~\|G^i_{\varphi}(x)\|\leq \inf_{g\in\partial \varphi(x)}\|g\|,
\end{equation}
and $(G^i_\varphi, \omega, \Omega)$-(cor-obj-EB) conditions hold. Then, we have
\begin{center}
 $(G^1_\varphi, \kappa, \Omega)$-\eqref{res-EB}$\Leftrightarrow$ $(G^1_\varphi, \nu, \Omega)$-\eqref{cor-EB}$\Leftrightarrow$  $(G^1_\varphi, \eta, \Omega)$-\eqref{res-obj-EB}\\
 $\Leftrightarrow$$(\varphi, \alpha, \Omega)$-\eqref{obj-EB} $\Leftrightarrow$ \\
  $(G^2_\varphi, \kappa, \Omega)$-\eqref{res-EB}$\Leftrightarrow$ $(G^2_\varphi, \nu, \Omega)$-\eqref{cor-EB}$\Leftrightarrow$  $(G^2_\varphi, \eta, \Omega)$-\eqref{res-obj-EB}.
\end{center}
\end{corollary}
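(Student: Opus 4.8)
The plan is to observe that Corollary \ref{corr1} is essentially Theorem \ref{mainresult} invoked twice and then glued together. The decisive structural fact is that among the four conditions appearing in the equivalence chain, exactly one---the \eqref{obj-EB} condition---makes no reference whatsoever to a residual measure operator: it is a statement purely about $\varphi$, relating its objective value error $\varphi(x)-\min\varphi$ to its point value error $d(x,\crit\varphi)$. Consequently $(\varphi,\alpha,\Omega)$-\eqref{obj-EB} is literally the same assertion regardless of whether we work with $G^1_\varphi$ or $G^2_\varphi$, and it is this shared node that will serve as the bridge between the two operator-specific chains.

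First I would verify that the hypotheses of Theorem \ref{mainresult} are met for each operator $G^i_\varphi$ separately. Since $\varphi\in\Gamma_0(\RR^n)\subseteq\Gamma(\RR^n)$ attains its minimum, we have $\crit\varphi=\mathrm{Arg}\min\varphi$, which is nonempty by assumption and closed because it is a sublevel set of the lower semicontinuous function $\varphi$; thus the standing hypotheses of the theorem hold. The remaining hypotheses are granted directly in the statement of the corollary: $\Omega$ is $\partial\varphi$-invariant, each $G^i_\varphi$ satisfies the domination \eqref{assum10} (which is exactly the instance of \eqref{assum1} required by the theorem), and each satisfies its $(G^i_\varphi,\omega,\Omega)$-\eqref{cor-obj-EB} condition. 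Hence Theorem \ref{mainresult} applies to $i=1$ and to $i=2$ independently.

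Applying the theorem with $G_\varphi=G^1_\varphi$ yields the equivalences
\begin{align*}
(\varphi,\alpha,\Omega)\text{-}\eqref{obj-EB}&\Leftrightarrow(G^1_\varphi,\nu,\Omega)\text{-}\eqref{cor-EB}\\
&\Leftrightarrow(G^1_\varphi,\kappa,\Omega)\text{-}\eqref{res-EB}\Leftrightarrow(G^1_\varphi,\eta,\Omega)\text{-}\eqref{res-obj-EB},
\end{align*}
and applying it with $G_\varphi=G^2_\varphi$ yields the identical chain with the superscript $2$ in place of $1$, both chains sharing the common hub $(\varphi,\alpha,\Omega)$-\eqref{obj-EB}. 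Transitivity of logical equivalence then splices the two chains at this hub and produces the full ring of equivalences claimed in the corollary. I do not anticipate a genuine obstacle here: the entire content is the recognition that \eqref{obj-EB} is operator-free, so the only point requiring care is the bookkeeping that the constants $\nu,\kappa,\eta,\alpha$ are understood existentially---each condition is equivalent to the existence of \emph{some} positive constant making it hold---rather than as fixed numbers that must coincide across the two operators.
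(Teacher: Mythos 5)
Your proposal is correct and matches the paper's intent exactly: the paper presents Corollary \ref{corr1} as "a direct consequence" of Theorem \ref{mainresult} with no written proof, and the argument it implicitly relies on is precisely your two applications of the theorem spliced at the operator-free node $(\varphi,\alpha,\Omega)$-\eqref{obj-EB}. Your explicit verification of the hypotheses and the remark that the constants are understood existentially are both accurate and, if anything, more careful than the paper itself.
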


Now, we list some cases where the equivalence between the EB conditions indeed holds.
\begin{corollary}\label{corr2}
The EB conditions \eqref{cor-EB}, \eqref{res-EB}, \eqref{obj-EB}, and \eqref{res-obj-EB} are equivalent under each of the following situations:
\begin{description}
  \item[case 1:] $\varphi \in\cF^{1}(\RR^n)$ achieves its minimum $\min \varphi$, $X=\RR^n$ and $\Omega\subset X$ is $\nabla \varphi$-invariant, and $G_{\varphi} =\nabla \varphi$;
  \item[case 2:] $\varphi \in\Gamma_0(\RR^n)$ achieves its minimum $\min \varphi$, $X=\dom \partial\varphi$ and $\Omega\subset X$ is $\partial \varphi$-invariant, and $G_{\varphi} =\partial^0\varphi$; 
  \item[case 3:] $\varphi =f +g $, where $f \in\cF^{1,1}_L(\RR^n)$ and $g\in\Gamma_0(\RR^n)$, achieves its minimum $\min \varphi$, $X=\RR^n$ and $\Omega\subset X$ is $\partial \varphi$-invariant, and  $G_{\varphi}=\mathcal{R}_t$, where $\mathcal{R}_t(x):=t^{-1}(x-x^+)$
      with $t\in (0,\frac{1}{L}]$ and $x^+=\prox_{tg}(x-t\nabla f(x))$.
  In addition, we assume that there exists a constant $0<\epsilon\leq \frac{2}{t}$ such that
         \begin{equation}\label{assum2}
 \|G_{\varphi}(x)\|^2\geq \epsilon (\varphi(x)-\varphi(x^+)).
\end{equation}
\end{description}
\end{corollary}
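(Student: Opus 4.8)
The plan is to reduce each of the three cases to a single application of Theorem \ref{mainresult}. That theorem delivers the full chain $\eqref{obj-EB}\Leftrightarrow\eqref{cor-EB}\Leftrightarrow\eqref{res-EB}\Leftrightarrow\eqref{res-obj-EB}$ as soon as one verifies four items: that $\varphi\in\Gamma_0(\RR^n)$ with $\crit\varphi=\mathrm{Arg}\min\varphi$ nonempty and closed; that $G_\varphi$ is a genuine residual measure operator, i.e.\ its zero set on $X$ equals $\crit\varphi$; that the $(G_\varphi,\omega,\Omega)$-\eqref{cor-obj-EB} condition holds for some $\omega>0$; and that the domination \eqref{assum1}, namely $\|G_\varphi(x)\|\le\inf_{g\in\partial\varphi(x)}\|g\|$, is in force. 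So in each case I would check these four items and then invoke the theorem. Note that $\partial\varphi$-invariance of $\Omega$ is assumed in every case (in Case 1, $\nabla\varphi=\partial\varphi$ is single-valued, so the two notions coincide), and membership in $\Gamma_0(\RR^n)$ is immediate: $\cF^1(\RR^n)\subset\Gamma_0(\RR^n)$, and $f+g$ with $f\in\cF^{1,1}_L(\RR^n)$ and $g\in\Gamma_0(\RR^n)$ is again in $\Gamma_0(\RR^n)$.

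Cases 1 and 2 I expect to be routine. For differentiable convex $\varphi$ with $G_\varphi=\nabla\varphi$, and for $\varphi\in\Gamma_0(\RR^n)$ with $G_\varphi=\partial^0\varphi$, the zero set of $G_\varphi$ on $X$ is exactly $\mathrm{Arg}\min\varphi$, the domination \eqref{assum1} holds with equality by the definition of the least-norm subgradient, and \eqref{cor-obj-EB} with $\omega=1$ is nothing but the subgradient inequality $\langle g,x-x_p\rangle\ge\varphi(x)-\varphi(x_p)=\varphi(x)-\min\varphi$ evaluated at $g=G_\varphi(x)$ and $x_p\in\mathcal{P}_\varphi(x)$.

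The real work is Case 3, the forward-backward mapping $G_\varphi=\mathcal{R}_t$. Here I would first record the optimality condition of the prox step, $\mathcal{R}_t(x)-\nabla f(x)\in\partial g(x^+)$, from which $\mathcal{R}_t(x)=0\iff 0\in\partial\varphi(x)$, confirming the residual-operator property. To obtain \eqref{assum1} I would use monotonicity of $\partial g$: for any $s\in\partial\varphi(x)$ one has $s-\nabla f(x)\in\partial g(x)$, so pairing these two subgradients against $x^+-x=-t\,\mathcal{R}_t(x)$ gives $\langle \mathcal{R}_t(x)-s,\mathcal{R}_t(x)\rangle\le0$, whence $\|\mathcal{R}_t(x)\|^2\le\langle s,\mathcal{R}_t(x)\rangle\le\|s\|\,\|\mathcal{R}_t(x)\|$ and therefore $\|\mathcal{R}_t(x)\|\le\|s\|$ for every $s\in\partial\varphi(x)$. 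For \eqref{cor-obj-EB} I would establish the prox-gradient descent inequality $\varphi(x^+)\le\varphi(y)+\langle\mathcal{R}_t(x),x-y\rangle-\frac{t}{2}\|\mathcal{R}_t(x)\|^2$ for all $y$, valid because $t\le 1/L$, and then specialize $y=x_p\in\mathcal{P}_\varphi(x)$.

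The main obstacle is the final bookkeeping in Case 3. Specializing the descent inequality gives $\langle\mathcal{R}_t(x),x-x_p\rangle\ge(\varphi(x^+)-\min\varphi)+\frac{t}{2}\|\mathcal{R}_t(x)\|^2$, which bounds the correlated error by the objective gap at $x^+$ rather than at $x$; the assumption \eqref{assum2} is precisely what converts between the two. Writing $A=\varphi(x)-\min\varphi\ge0$ and $B=\varphi(x)-\varphi(x^+)\ge0$, one has $A\ge B$ because $\varphi(x^+)\ge\min\varphi$, and feeding $\frac{t}{2}\|\mathcal{R}_t(x)\|^2\ge\frac{t\epsilon}{2}B$ from \eqref{assum2} into the bound yields $\langle\mathcal{R}_t(x),x-x_p\rangle\ge A+(\tfrac{t\epsilon}{2}-1)B$. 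Since $\epsilon\le 2/t$ the coefficient $\tfrac{t\epsilon}{2}-1$ is nonpositive, so replacing $B$ by its upper bound $A$ gives $\langle\mathcal{R}_t(x),x-x_p\rangle\ge\frac{t\epsilon}{2}\,A$, i.e.\ \eqref{cor-obj-EB} holds with $\omega=\frac{t\epsilon}{2}>0$. This sign tracking, exploiting $B\le A$ against a nonpositive coefficient, is the one delicate point; everything else is a direct verification feeding into Theorem \ref{mainresult}.
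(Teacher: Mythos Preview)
Your proposal is correct and follows essentially the same route as the paper: reduce each case to Theorem~\ref{mainresult} by verifying that $G_\varphi$ is a residual measure operator, that \eqref{cor-obj-EB} holds, and that the domination \eqref{assum1} holds; in Case~3 your bookkeeping with $A,B$ is a rearrangement of the paper's chain $\varphi(x^+)-\min\varphi+\tfrac{t\epsilon}{2}(\varphi(x)-\varphi(x^+))=\tfrac{t\epsilon}{2}(\varphi(x)-\min\varphi)+(1-\tfrac{t\epsilon}{2})(\varphi(x^+)-\min\varphi)\ge\tfrac{t\epsilon}{2}(\varphi(x)-\min\varphi)$. The one substantive difference is that for \eqref{assum1} in Case~3 the paper simply cites Theorem~3.5 of \cite{Drusvyatskiy2016Error} and Lemma~4.1 of \cite{Li2016Calculus}, whereas you supply a direct monotonicity argument; your self-contained proof is a nice addition.
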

\begin{proof}
First of all,  $\crit \varphi$ is nonempty since $\crit \varphi=\mathrm{Arg}\min \varphi\neq \emptyset$, and is closed since $\varphi$ a proper and lower semicontinuous function, in all the listed cases. Secondly, by optimality conditions, one can easily verify that $G_\varphi$ in all the listed cases are residual measure operators. We only need to verify the remaining assumptions in Theorem \ref{mainresult}.

For both cases 1 and 2, the convexity of $\varphi$  implies the \eqref{cor-obj-EB} condition with $\omega=1$.
In case 1, the assumption \eqref{assum1} holds obviously because of $\partial \varphi(x)=\{\nabla \varphi(x)\}$.
In case 2, the assumption \eqref{assum1} follows from the definition of $\partial^0\varphi(x)$.

Now, let us consider the case 3.
Since $f(x)\in\cF^{1,1}_L(\RR^n)$ and  $g\in\Gamma_0(\RR^n)$, we have that $G_{\varphi}(x)$ satisfies the standard result
$$~~\forall x, y\in\RR^n,~~\varphi(x^+)\leq \varphi(y)+\langle G_{\varphi}(x), x-y\rangle -\frac{t}{2}\|G_{\varphi}(x)\|^2;$$
see e.g. Lemma 2.3 in \cite{beck2009fast} or Lemma 2 in the very recent work \cite{Attouch2017conv}. Since $\varphi$ also belongs to $\Gamma_0(\RR^n)$, we can conclude that $\mathrm{Arg}\min \varphi$ is a nonempty closed convex set. Thus, by the projection theorem, there exists a unique projection point of $x$ onto $\mathrm{Arg}\min \varphi$, denoted by $x_p$.  Using the inequality above with $y=x_p$ and the assumption \eqref{assum2}, we derive that
  \begin{align*}
   \langle G_{\varphi}(x), x-x_p \rangle
   & \geq \varphi(x^+)-\min \varphi +\frac{t}{2}\|G_{\varphi}(x)\|^2\\
   &\geq \varphi(x^+)-\min \varphi +\frac{t\epsilon}{2}(\varphi(x)-\varphi(x^+)) \\
   &= \frac{t\epsilon}{2}(\varphi(x)-\min \varphi)+(1-\frac{t\epsilon}{2})(\varphi(x^+)-\min \varphi)\\
   &\geq \frac{t\epsilon}{2}(\varphi(x)-\min \varphi),
\end{align*}
from which the $(G_\varphi, \omega, \Omega)$-\eqref{cor-obj-EB} condition with $\omega=\frac{t\epsilon}{2}$ follows.
The assumption \eqref{assum1} in this case was established in Theorem 3.5 in \cite{Drusvyatskiy2016Error} and Lemma 4.1 in \cite{Li2016Calculus}. This completes the proof.
\end{proof}

\begin{remark}\label{remarkadd}
\begin{itemize}
  \item[(i)]In cases 1 and 2, from Theorem \ref{mainresult} we can see that if one only needs the implication $$\eqref{obj-EB}\Rightarrow \eqref{cor-EB}\Rightarrow \eqref{res-EB}\Rightarrow \eqref{res-obj-EB},$$ then the assumption on $\Omega$ can be removed.
  \item[(ii)]In case 2, if $\Omega=\dom \partial\varphi$, then $(\varphi, \alpha, \dom \partial\varphi)$-\eqref{obj-EB} is actually equivalent to
$$ \forall~ x\in \RR^n,~~\varphi(x)- \min \varphi  \geq \frac{\alpha}{2}\cdot d^2(x, \mathrm{Arg}\min \varphi),$$
since $\dom \partial\varphi$ is a dense subset of $\dom\varphi$ according to Corollary 16.29 in \cite{Bauschke2011Convex} and $\varphi(x)=+\infty$ for $x\notin \dom \varphi$.
\end{itemize}
\end{remark}
We note that while this work was under review, the authors of \cite{Karimil2016linear} independently also obtained the equivalent relationship between the EB conditions \eqref{cor-EB}, \eqref{res-EB}, \eqref{obj-EB}, and \eqref{res-obj-EB} for functions in $\cF^{1,1}_L(\RR^n)$. We also note that the authors of \cite{Garrigos2017conv} independently recently obtained the equivalent relationship between the EB conditions \eqref{res-EB}, \eqref{obj-EB}, and \eqref{res-obj-EB} for functions in $\Gamma_0(\RR^n)$. The former is merely limited to $\cF^{1,1}_L(\RR^n)$, and the latter mainly focuses on $\Gamma_0(\RR^n)$ but does not consider \eqref{cor-EB}.

Observe that the condition \eqref{assum2} is implied by the \eqref{res-obj-EB} condition since
\begin{equation*}
 \|G_{\varphi}(x)\|^2\geq \eta^2 (\varphi(x)-\min\varphi) \geq \eta^2 (\varphi(x)-\varphi(x^+)).
\end{equation*}
And also, note that $\varphi=f+g\in \Gamma_0(\RR^n)$ if $f \in\cF^{1,1}_L(\RR^n)$ and $g\in\Gamma_0(\RR^n)$. With a little effort, we can get the following result.
\begin{corollary}\label{corr3}
 Let $\varphi=f+g$ with $f \in\cF^{1,1}_L(\RR^n)$ and $g\in\Gamma_0(\RR^n)$ achieve its minimum $\min \varphi$, and let $\Omega\subset \RR^n$ be $\partial \varphi$-invariant and $t\in (0, \frac{1}{L}]$.
 If the $(\mathcal{R}_t, \eta, \Omega)$-\eqref{res-obj-EB} condition holds, then each of the following conditions holds and hence they are equivalent:
\begin{center}
 $(\partial^0\varphi, \kappa, \Omega)$-\eqref{res-EB}$\Leftrightarrow$ $(\partial^0\varphi, \nu, \Omega)$-\eqref{cor-EB}$\Leftrightarrow$  $(\partial^0\varphi, \eta, \Omega)$-\eqref{res-obj-EB}\\
 $\Leftrightarrow$$(\varphi, \alpha, \Omega)$-\eqref{obj-EB} $\Leftrightarrow$ \\
  $(\mathcal{R}_t, \kappa, \Omega)$-\eqref{res-EB}$\Leftrightarrow$ $(\mathcal{R}_t, \nu, \Omega)$-\eqref{cor-EB}$\Leftrightarrow$  $(\mathcal{R}_t, \eta, \Omega)$-\eqref{res-obj-EB}.
\end{center}
\end{corollary}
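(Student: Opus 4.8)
The plan is to obtain the result by invoking Corollary~\ref{corr2} twice—once with the residual measure operator $\mathcal{R}_t$ (case 3) and once with $\partial^0\varphi$ (case 2)—and then to glue the two resulting chains of equivalences along their common member, the $(\varphi,\alpha,\Omega)$-\eqref{obj-EB} condition. Since $\varphi=f+g\in\Gamma_0(\RR^n)$ achieves its minimum and $\Omega$ is $\partial\varphi$-invariant, the hypotheses of case 2 hold outright, so the four conditions built from $\partial^0\varphi$ together with \eqref{obj-EB} are already equivalent with no further effort. The only genuine work is to bring case 3 into force, namely to verify its extra hypothesis \eqref{assum2}, after which the assumed $(\mathcal{R}_t,\eta,\Omega)$-\eqref{res-obj-EB} condition will trigger the entire equivalence.

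First I would verify \eqref{assum2}. Using the assumed $(\mathcal{R}_t,\eta,\Omega)$-\eqref{res-obj-EB} condition, the bound $\varphi(x^+)\geq\min\varphi$, and the descent inequality $\varphi(x^+)\leq\varphi(x)$ (Lemma 2.3 in \cite{beck2009fast} with $y=x$), one gets exactly the display preceding the corollary:
$$\|G_{\varphi}(x)\|^2\geq \eta^2(\varphi(x)-\min\varphi)\geq \eta^2(\varphi(x)-\varphi(x^+)),\qquad \forall x\in\Omega\cap\dom\varphi.$$
Setting $\epsilon=\min\{\eta^2,\tfrac{2}{t}\}$ then makes \eqref{assum2} hold: the side constraint $\epsilon\leq\tfrac{2}{t}$ demanded by case 3 is automatic, and the estimate $\|G_\varphi(x)\|^2\geq \epsilon(\varphi(x)-\varphi(x^+))$ survives the shrinking of $\eta^2$ to $\epsilon$ precisely because $\varphi(x)-\varphi(x^+)\geq 0$.

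With \eqref{assum2} secured, case 3 of Corollary~\ref{corr2} applies and yields the equivalence of $(\mathcal{R}_t,\kappa,\Omega)$-\eqref{res-EB}, $(\mathcal{R}_t,\nu,\Omega)$-\eqref{cor-EB}, $(\mathcal{R}_t,\eta,\Omega)$-\eqref{res-obj-EB}, and $(\varphi,\alpha,\Omega)$-\eqref{obj-EB}. Because $(\mathcal{R}_t,\eta,\Omega)$-\eqref{res-obj-EB} is assumed to hold, every member of this first chain holds; in particular $(\varphi,\alpha,\Omega)$-\eqref{obj-EB} holds. This same \eqref{obj-EB} condition is one of the four equivalent conditions delivered by case 2 for the operator $\partial^0\varphi$, so its validity propagates through that second chain, forcing $(\partial^0\varphi,\kappa,\Omega)$-\eqref{res-EB}, $(\partial^0\varphi,\nu,\Omega)$-\eqref{cor-EB}, and $(\partial^0\varphi,\eta,\Omega)$-\eqref{res-obj-EB} to hold as well. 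Since both chains share the \eqref{obj-EB} condition, they merge into a single equivalence class containing all seven listed conditions, which is the claim.

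The one point requiring care—and the main, if modest, obstacle—is the verification of \eqref{assum2} together with the side constraint $\epsilon\leq\tfrac{2}{t}$ needed to run case 3; everything else is bookkeeping in chaining the two instances of Corollary~\ref{corr2}. A secondary subtlety worth flagging is that the two operators are attached to different ambient sets $X$ (namely $\RR^n$ for $\mathcal{R}_t$ and $\dom\partial\varphi$ for $\partial^0\varphi$), which is exactly why I route the argument through \eqref{obj-EB}—a condition that makes no reference to $G_\varphi$—rather than attempting a direct appeal to Corollary~\ref{corr1}.
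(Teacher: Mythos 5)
Your proposal is correct and follows essentially the same route as the paper: the displayed inequality $\|G_\varphi(x)\|^2\geq\eta^2(\varphi(x)-\min\varphi)\geq\eta^2(\varphi(x)-\varphi(x^+))$ preceding the corollary is precisely how the paper derives \eqref{assum2} from the assumed $(\mathcal{R}_t,\eta,\Omega)$-\eqref{res-obj-EB} condition, after which cases 3 and 2 of Corollary~\ref{corr2} are glued along \eqref{obj-EB} exactly as you describe. Your explicit choice $\epsilon=\min\{\eta^2,\tfrac{2}{t}\}$ to respect the side constraint $\epsilon\leq\tfrac{2}{t}$ is a detail the paper leaves implicit, but it is the right fix.
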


Based on the relationship established in Theorem 2 in \cite{Zhang2015The}, that is $(\varphi, \alpha, \Omega)$-\eqref{obj-EB} $\Leftrightarrow$
  $(\mathcal{R}_t, \kappa, \Omega)$-\eqref{res-EB}$\Leftrightarrow$ $(\mathcal{R}_t, \nu, \Omega)$-\eqref{cor-EB}, and together with the case 2 of Corollary \ref{corr2}, we still have the following result even if we do not take
the $(\mathcal{R}_t, \eta, \Omega)$-\eqref{res-obj-EB} condition as an assumption.

\begin{corollary}
 Let $\varphi=f+g$ with $f \in\cF^{1,1}_L(\RR^n)$ and $g\in\Gamma_0(\RR^n)$ achieve its minimum $\min \varphi$, and let $\Omega\subset \dom\partial \varphi$ be  $\partial \varphi$-invariant and $t\in (0, \frac{1}{L}]$.
Then, we have
 \begin{center}
 $(\partial^0\varphi, \kappa, \Omega)$-\eqref{res-EB}$\Leftrightarrow$ $(\partial^0\varphi, \nu, \Omega)$-\eqref{cor-EB}$\Leftrightarrow$  $(\partial^0\varphi, \eta, \Omega)$-\eqref{res-obj-EB}\\
 $\Leftrightarrow$$(\varphi, \alpha, \Omega)$-\eqref{obj-EB} $\Leftrightarrow$
  $(\mathcal{R}_t, \kappa, \Omega)$-\eqref{res-EB}$\Leftrightarrow$ $(\mathcal{R}_t, \nu, \Omega)$-\eqref{cor-EB}.
\end{center}
\end{corollary}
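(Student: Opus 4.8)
The plan is to recognize this corollary as a straightforward stitching together of two previously established equivalence groups that share the common node $(\varphi,\alpha,\Omega)$-\eqref{obj-EB}. First I would record the structural facts that make the abstract machinery applicable: since $f\in\cF^{1,1}_L(\RR^n)$ and $g\in\Gamma_0(\RR^n)$, the sum $\varphi=f+g$ lies in $\Gamma_0(\RR^n)$, and because it attains its minimum, $\crit\varphi=\mathrm{Arg}\min\varphi$ is nonempty and closed. This places us exactly in the setting of the earlier results, so the strategy is to invoke two of them, each producing an equivalence group through $(\varphi,\alpha,\Omega)$-\eqref{obj-EB}, and then fuse the groups by transitivity.

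The first group comes from case 2 of Corollary \ref{corr2} applied with $X=\dom\partial\varphi$ and $G_\varphi=\partial^0\varphi$. Its hypotheses hold verbatim under the present assumptions: $\varphi\in\Gamma_0(\RR^n)$ attains its minimum, and $\Omega\subset\dom\partial\varphi$ is $\partial\varphi$-invariant. Corollary \ref{corr2} then delivers, with no further work, the equivalence of $(\partial^0\varphi,\kappa,\Omega)$-\eqref{res-EB}, $(\partial^0\varphi,\nu,\Omega)$-\eqref{cor-EB}, $(\partial^0\varphi,\eta,\Omega)$-\eqref{res-obj-EB}, and $(\varphi,\alpha,\Omega)$-\eqref{obj-EB}; note that the internal verifications of assumption \eqref{assum1} (via the least-norm definition of $\partial^0\varphi$) and of the $\omega=1$ \eqref{cor-obj-EB} condition (via convexity) are already carried out inside that corollary, so I need only confirm its hypotheses rather than re-derive them. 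The second group comes from Theorem 2 of \cite{Zhang2015The}, whose hypotheses---$\varphi=f+g$ with $f\in\cF^{1,1}_L(\RR^n)$, $g\in\Gamma_0(\RR^n)$, and $t\in(0,\tfrac{1}{L}]$---match ours exactly, and which asserts $(\varphi,\alpha,\Omega)$-\eqref{obj-EB}$\Leftrightarrow(\mathcal{R}_t,\kappa,\Omega)$-\eqref{res-EB}$\Leftrightarrow(\mathcal{R}_t,\nu,\Omega)$-\eqref{cor-EB}.

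Since both groups contain $(\varphi,\alpha,\Omega)$-\eqref{obj-EB}, transitivity of logical equivalence immediately chains all six conditions into the single equivalence asserted in the statement, completing the argument. I anticipate no genuine obstacle here: unlike Corollary \ref{corr3}, this statement does not posit $(\mathcal{R}_t,\eta,\Omega)$-\eqref{res-obj-EB} as a hypothesis, so nothing has to be bootstrapped from a residual-objective bound, and the passage from the $\mathcal{R}_t$-group to the $\partial^0\varphi$-group is entirely mediated by the shared \eqref{obj-EB} node. The only point demanding genuine care is citing the correct instance---that it is case 2 of Corollary \ref{corr2}, certifying the $\partial^0\varphi$-based conditions rather than the $\mathcal{R}_t$-based ones---after which the remainder is pure bookkeeping.
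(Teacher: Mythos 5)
Your proposal is correct and follows exactly the route the paper itself indicates: the $\partial^0\varphi$-group of equivalences comes from case 2 of Corollary \ref{corr2}, the $\mathcal{R}_t$-group from Theorem 2 of \cite{Zhang2015The}, and the two are fused through the shared $(\varphi,\alpha,\Omega)$-\eqref{obj-EB} node. Nothing is missing.
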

 
Note that the  $(\mathcal{R}_t, \eta, \Omega)$-\eqref{res-obj-EB} condition is not involved in the equivalence above. This might explain why one can avoid the condition \eqref{assum2} in existing related results.

In all corollaries above, parameters involved in different EB conditions can be set explicitly as in Theorem \ref{mainresult}, but we omit the details here.

\section{An abstract gradient-type method: linear convergence and applications}\label{sec5}
In this section, we define an abstract gradient-type method by viewing the negative of the residual measure operator as a descent direction, and then figure out a necessary and sufficient condition for linear convergence based on the abstract EB conditions defined before. The following main result generalizes Proposition \ref{necesuff}.

\begin{theorem} \label{genecond}
Let $\varphi\in \Gamma(\RR^n)$ be such that it achieves its minimum $\min \varphi$ and that $\crit \varphi$ is nonempty and closed. Let $X\subset \RR^n$, $\Omega\subset X$, and $G_\varphi$ be a residual measure operator related to $\varphi$ and $X$. Suppose that $\varphi$ satisfies the $(G_\varphi, \beta, \Omega)$-\eqref{cor-res-EB} condition. Define the abstract gradient-type method by
 $$x_{k+1}=x_k-h\cdot G_{\varphi}(x_k), ~k\geq 0,$$
 with step size $h>0$ and arbitrary initial point $x_0\in\Omega$. Assume that $x_k\in \Omega, k\geq 0$. Let $\tau, \theta\in (0, 1)$.
 \begin{enumerate}
   \item[(i)] If $\varphi$ satisfies the $(G_\varphi, \nu, \Omega)$-\eqref{cor-EB} condition with $\nu<\frac{1}{\beta}$ and the following inequalities hold
 \begin{equation}\label{sizecond}
  \frac{1-\tau}{2\theta\nu}\leq h\leq 2(1-\theta)\beta,~~\tau\geq  1-4\theta(1-\theta)\beta\nu,
\end{equation}
then the abstract gradient-type method converges linearly in the sense that
\begin{equation}\label{linearconv1}
d^2(x_{k+1},\crit \varphi) \leq \tau\cdot d^2(x_k,\crit \varphi),~~k\geq 0.
\end{equation}
The optimal rate $\tau_0:=1-\beta\nu$ is obtained at $h=\beta$ and $\theta=\frac{1}{2}$.
   \item[(ii)] Conversely, if  the abstract gradient-type method converges linearly in the sense of \eqref{linearconv1}, then $\varphi$ satisfies the $(G_\varphi, \nu, \Omega)$-\eqref{cor-EB} condition with $\nu=\frac{\beta(1-\sqrt{\tau})^2}{h^2}$.
 \end{enumerate}
\end{theorem}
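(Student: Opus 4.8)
The plan is to follow the template of Proposition~\ref{necesuff}, treating $G_\varphi$ as the abstract analogue of $\nabla f$, the set $\crit\varphi$ as the analogue of $\mathrm{Arg}\min f$, and the \eqref{cor-res-EB} condition as the abstract replacement for the $\cF^{1,1}_L$ inequality (Theorem 2.1.5 in Nesterov's book) that was invoked there. The common starting point for both directions is the one-step distance expansion: fixing a projection point $x_k^\prime\in\mathcal{P}_\varphi(x_k)$, so that $\|x_k-x_k^\prime\|=d(x_k,\crit\varphi)$ and $d(x_{k+1},\crit\varphi)\leq\|x_{k+1}-x_k^\prime\|$, substituting $x_{k+1}=x_k-h\,G_\varphi(x_k)$ yields
\begin{equation*}
d^2(x_{k+1},\crit\varphi)\leq d^2(x_k,\crit\varphi)-2h\,\langle G_\varphi(x_k),x_k-x_k^\prime\rangle+h^2\|G_\varphi(x_k)\|^2 .
\end{equation*}

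For part (i), the key observation is that \eqref{cor-EB} and \eqref{cor-res-EB} both lower bound the \emph{same} quantity $C(x):=\inf_{x_p\in\mathcal{P}_\varphi(x)}\langle G_\varphi(x),x-x_p\rangle$, so no infimum-of-a-sum difficulty arises: for any $\theta\in(0,1)$ I split $C(x_k)=\theta\,C(x_k)+(1-\theta)\,C(x_k)$ and apply \eqref{cor-EB} to the first copy and \eqref{cor-res-EB} to the second, obtaining
\begin{equation*}
\langle G_\varphi(x_k),x_k-x_k^\prime\rangle\geq C(x_k)\geq \theta\nu\,d^2(x_k,\crit\varphi)+(1-\theta)\beta\,\|G_\varphi(x_k)\|^2 .
\end{equation*}
Feeding this into the expansion and collecting terms gives
\begin{equation*}
d^2(x_{k+1},\crit\varphi)\leq (1-2h\theta\nu)\,d^2(x_k,\crit\varphi)+\bigl(h^2-2h(1-\theta)\beta\bigr)\|G_\varphi(x_k)\|^2 .
\end{equation*}
The residual term is annihilated precisely when $h\leq 2(1-\theta)\beta$, and the remaining factor satisfies $1-2h\theta\nu\leq\tau$ precisely when $h\geq\frac{1-\tau}{2\theta\nu}$; compatibility of these two bounds forces $\tau\geq 1-4\theta(1-\theta)\beta\nu$, which are exactly the hypotheses \eqref{sizecond}. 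Maximizing the contraction over the largest admissible $h=2(1-\theta)\beta$ and then over $\theta$ (with $\theta(1-\theta)$ peaking at $\theta=\tfrac12$) recovers the optimal rate $\tau_0=1-\beta\nu$ at $h=\beta$; the hypothesis $\nu<1/\beta$ is what keeps $\tau_0\in(0,1)$.

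For part (ii) I mirror the necessity argument of Proposition~\ref{necesuff}: picking $u_{k+1}\in\mathcal{P}_\varphi(x_{k+1})$ and using the triangle inequality together with $\|x_{k+1}-x_k\|=h\|G_\varphi(x_k)\|$ gives $d(x_k,\crit\varphi)\leq d(x_{k+1},\crit\varphi)+h\|G_\varphi(x_k)\|$; combining with the assumed contraction $d(x_{k+1},\crit\varphi)\leq\sqrt{\tau}\,d(x_k,\crit\varphi)$ produces the reverse residual bound $\|G_\varphi(x_k)\|\geq\frac{1-\sqrt{\tau}}{h}\,d(x_k,\crit\varphi)$. Squaring and plugging into \eqref{cor-res-EB} immediately yields $C(x_k)\geq\frac{\beta(1-\sqrt{\tau})^2}{h^2}\,d^2(x_k,\crit\varphi)$, i.e. \eqref{cor-EB} with $\nu=\frac{\beta(1-\sqrt{\tau})^2}{h^2}$. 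Since $x_0\in\Omega$ is arbitrary, taking $k=0$ and letting $x_0$ range over $\Omega\cap\dom\varphi$ promotes this from the orbit to all points of $\Omega\cap\dom\varphi$.

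I expect the substantive points, rather than genuine obstacles, to be two. First, one must check that the descent estimate survives the fact that $\mathcal{P}_\varphi$ may be set-valued (as $\crit\varphi$ is only assumed closed, not convex): this is harmless because the expansion uses a single projection point $x_k^\prime$ with $\|x_k-x_k^\prime\|=d(x_k,\crit\varphi)$, while $C(x_k)$, being an infimum, still lower-bounds $\langle G_\varphi(x_k),x_k-x_k^\prime\rangle$ for that same point. Second, the converse rests on the standing assumption $x_k\in\Omega$ (so that both EB conditions apply along the trajectory) and on linear convergence being posited for every admissible start; it is this freedom in $x_0$ that upgrades a statement about the trajectory into the pointwise \eqref{cor-EB} bound on $\Omega\cap\dom\varphi$.
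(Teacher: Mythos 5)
Your proposal is correct and follows essentially the same route as the paper's own proof: the same one-step distance expansion, the same convex combination $\theta\cdot\eqref{cor-EB}+(1-\theta)\cdot\eqref{cor-res-EB}$ of the two lower bounds on $\inf_{x_p\in\mathcal{P}_\varphi(x_k)}\langle G_\varphi(x_k),x_k-x_p\rangle$ for sufficiency, and the same triangle-inequality argument combined with \eqref{cor-res-EB} and the arbitrariness of $x_0\in\Omega$ for necessity. The two points you flag as needing care (set-valuedness of $\mathcal{P}_\varphi$ and the role of the arbitrary starting point) are handled implicitly in the paper in exactly the way you describe.
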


\begin{proof}
First, we repeat the argument before \eqref{linearconv} to obtain that for $v_k\in \mathcal{P}_\varphi(x_k)$,
$$d^2(x_{k+1},\crit\varphi) \leq d^2(x_k,\crit\varphi) -2h\langle G_{\varphi}(x_k),x_k-v_k\rangle+h^2\|G_{\varphi}(x_k)\|^2,~k\geq 0.$$
Take $\theta\in (0, 1)$ and then use a convex combination of the \eqref{cor-res-EB} and  \eqref{cor-EB} conditions at $x=x_k$ to obtain
$$ \inf_{v_k\in \mathcal{P}_\varphi(x_k)}\langle G_{\varphi}(x_k), x_k-v_k\rangle \geq \theta \nu\cdot d^2(x_k,\crit\varphi)+(1-\theta)\beta\cdot \|G_{\varphi}(x_k)\|^2, ~k\geq 0.$$
Therefore, we can derive that
   \begin{align*}
  d^2(x_{k+1},\crit\varphi) & \leq (1-2\theta \nu h)d^2(x_k,\crit\varphi)+(h^2-2h(1-\theta)\beta) \|G_{\varphi}(x_k)\|^2\\
  &\leq \tau \cdot d^2(x_k,\crit\varphi), ~k\geq 0,
   \end{align*}
where the second inequality follows from the condition \eqref{sizecond} on the step size.
Obviously, the optimal linear convergence rate $\tau_0=1-\beta\nu$ can be obtained at $h=\beta, \theta=\frac{1}{2}$.

Conversely,  pick $u_{k+1}\in \mathcal{P}_\varphi(x_{k+1})$ to derive that
   \begin{align}\label{add2}
  d(x_k, \crit \varphi) &\leq \|x_k-u_{k+1}\| \leq \|x_{k+1}-u_{k+1}\|+\|x_{k+1}-x_k\|\nonumber  \\
  &= d(x_{k+1}, \crit \varphi)+ h\|G_{\varphi}(x_k)\|, ~k\geq0.
   \end{align}
 Combine \eqref{add2} and the fact of linear convergence  $$d(x_{k+1},\crit \varphi) \leq \sqrt{\tau} \cdot d(x_{k},\crit \varphi), ~k\geq0$$ to obtain
$$(1-\sqrt{\tau})^2d^2(x_k,\crit \varphi)\leq h^2\|G_{\varphi}(x_k)\|^2, ~k\geq0.$$
Thus, together with the \eqref{cor-res-EB} condition, we can derive that
$$\inf_{v_k\in \mathcal{P}_\varphi(x_k)}\langle G_{\varphi}(x_k), x_k-v_k\rangle \geq \beta\|G_{\varphi}(x_k)\|^2\geq\frac{\beta(1-\sqrt{\tau})^2}{h^2}d^2(x_k,\crit \varphi), ~k\geq0.$$
Observe that the starting point $x_0\in\Omega$ can be arbitrary. Therefore,  the \eqref{cor-EB} condition with $\nu=\frac{\beta(1-\sqrt{\tau})^2}{h^2}$ holds. This completes the proof.
\end{proof}

With Theorem \ref{genecond} in hand,  we now claim the necessary and sufficient EB conditions guaranteeing linear convergence for the gradient method, the proximal point algorithm, and the forward-backward splitting algorithm. These conditions, previously known to be sufficient for linear convergence (see e.g. Section 4 in \cite{Bolte2015From}), are actually necessary.  We start by the gradient method, applied to possibly nonconvex optimization.
\begin{corollary}\label{corr5}
Let $f:\RR^n\rightarrow\RR$ be a gradient-Lipschitz-continuous function with modulus $L>0$. Assume that $f$ achieves its minimum $\min f$ and $\crit f=\mathrm{Arg}\min f \neq \emptyset$. Let $\epsilon>0$ be a fixed constant and set $\Omega=\{x: f(x)\leq \min f+\epsilon \}$.  Let $\{x_k\}_{k\geq0}$ be generated by the gradient descent method \eqref{gradmethod} with $h=\frac{1}{L}$ and $x_0\in \Omega$.
 \begin{enumerate}
   \item[(i)] If $f$ satisfies the $(\nabla f, \nu, \Omega)$-\eqref{cor-EB} condition with $\nu<L$, then the gradient descent method \eqref{gradmethod} with $h=\frac{1}{L}$ converges linearly in the sense that
\begin{equation}\label{linf}
f(x_{k+1})-\min f\leq   \left(1-(\frac{\nu}{L})^2\right)(f(x_k)-\min f),~k\geq 0.
\end{equation}
   \item[(ii)] If we further assume that $f$ is convex, then the gradient descent method \eqref{gradmethod}  with  $h=\frac{1}{L}$ attains the following linear convergence:
\begin{equation}\label{linx}
d^2(x_{k+1},\mathrm{Arg}\min f) \leq (1-\frac{\nu}{L})\cdot d^2(x_k,\mathrm{Arg}\min f),~k\geq 0.
\end{equation}
   \item[(iii)]
Conversely, if $f$ is convex and if starting from an arbitrary initial point $x_0\in\Omega$, the gradient descent method \eqref{gradmethod} with  $h=\frac{1}{L}$  converges linearly like \eqref{linx} but replacing $1-\frac{\nu}{L}$ with $\tau$, then $f$ satisfies the $(\nabla f, \nu, \Omega)$-\eqref{cor-EB} condition with $\nu=L(1-\sqrt{\tau})^2$.
 \end{enumerate}
\end{corollary}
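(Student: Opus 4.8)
The plan is to instantiate the abstract machinery of Theorem \ref{genecond} with the residual measure operator $G_f=\nabla f$, after first pinning down the constant $\beta$ in the \eqref{cor-res-EB} condition. The key observation is that for convex $f\in\cF^{1,1}_L(\RR^n)$ the co-coercivity inequality (Theorem 2.1.5 in \cite{nesterov2004introductory}), namely $\langle \nabla f(x)-\nabla f(y),x-y\rangle \geq \frac{1}{L}\|\nabla f(x)-\nabla f(y)\|^2$, yields upon setting $y=x_p\in\mathrm{Arg}\min f$ (so that $\nabla f(x_p)=0$) exactly $\langle \nabla f(x), x-x_p\rangle \geq \frac{1}{L}\|\nabla f(x)\|^2$. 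Since the right-hand side is independent of $x_p$, this is precisely the \eqref{cor-res-EB} condition with $\beta=\frac{1}{L}$. With this identification, parts (ii) and (iii) become essentially direct corollaries of Theorem \ref{genecond}.

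For part (ii), I would feed $G_f=\nabla f$, $\beta=\frac{1}{L}$, and the assumed $(\nabla f,\nu,\Omega)$-\eqref{cor-EB} condition with $\nu<L=\frac{1}{\beta}$ into Theorem \ref{genecond}(i), choosing the step size $h=\frac{1}{L}=\beta$ and $\theta=\frac{1}{2}$; these are exactly the parameters attaining the optimal rate $\tau_0=1-\beta\nu=1-\frac{\nu}{L}$, which gives \eqref{linx}. For part (iii), Theorem \ref{genecond}(ii) says that linear convergence in distance with rate $\tau$ forces \eqref{cor-EB} with $\nu=\frac{\beta(1-\sqrt{\tau})^2}{h^2}$, and substituting $\beta=h=\frac{1}{L}$ gives $\nu=L(1-\sqrt{\tau})^2$. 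One bookkeeping point I must check is the standing hypothesis $x_k\in\Omega$ of Theorem \ref{genecond}: since $\Omega$ is a sublevel set $\{x:f(x)\le \min f+\epsilon\}$ and the descent lemma gives $f(x_{k+1})\le f(x_k)-\frac{1}{2L}\|\nabla f(x_k)\|^2\le f(x_k)$ when $h=\frac{1}{L}$, the iterates remain in $\Omega$ once $x_0\in\Omega$, so this hypothesis is automatic.

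Part (i), the possibly nonconvex case, is where the argument cannot be read off from Theorem \ref{genecond} directly, since co-coercivity (hence \eqref{cor-res-EB} with $\beta=\frac{1}{L}$) is unavailable without convexity and the conclusion is phrased in objective value rather than distance; this is the step I expect to be the main obstacle. My plan is to route through a Polyak--{\L}ojasiewicz inequality. First, Cauchy--Schwarz applied to \eqref{cor-EB} (exactly as in the $\eqref{cor-EB}\Rightarrow\eqref{res-EB}$ step of Theorem \ref{mainresult}) gives $\|\nabla f(x)\|\ge \nu\, d(x,\crit f)$ on $\Omega$. Second, the descent lemma applied at a projection $x^*$ of $x$ onto $\crit f$ (using $\nabla f(x^*)=0$) gives the quadratic upper bound $f(x)-\min f\le \frac{L}{2}d^2(x,\crit f)$, valid without convexity. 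Combining the two yields $\|\nabla f(x)\|^2\ge \nu^2 d^2(x,\crit f)\ge \frac{2\nu^2}{L}(f(x)-\min f)$, i.e.\ \eqref{res-obj-EB} with $\eta^2=\frac{2\nu^2}{L}$. I then close by the standard sufficient-decrease argument: inserting this bound into $f(x_{k+1})-\min f\le f(x_k)-\min f-\frac{1}{2L}\|\nabla f(x_k)\|^2$ produces $f(x_{k+1})-\min f\le (1-\frac{\nu^2}{L^2})(f(x_k)-\min f)$, which is \eqref{linf}; the hypothesis $\nu<L$ keeps the contraction factor in $(0,1)$, and monotone decrease again ensures $x_k\in\Omega$ so that the EB conditions apply at every iterate.
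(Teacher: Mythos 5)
Your proposal is correct and follows essentially the same route as the paper's own proof: part (i) via the descent lemma, the quadratic upper bound $f(x)-\min f\le\frac{L}{2}d^2(x,\crit f)$ at a projection point, and Cauchy--Schwarz applied to \eqref{cor-EB}; parts (ii) and (iii) by deriving \eqref{cor-res-EB} with $\beta=\frac{1}{L}$ from co-coercivity and invoking Theorem \ref{genecond} with $h=\beta$, $\theta=\frac{1}{2}$. Your explicit check that the iterates stay in $\Omega$ matches the paper's use of the sufficient-decrease inequality.
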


\begin{proof}
We first show \eqref{linf} by modifying the argument due to Polyak \cite{Polyak1963Gradient} and recently highlighted in \cite{Karimil2015linear,Karimil2016linear}. The gradient-Lipschitz-continuity of $f$ implies
 \begin{equation}\label{Lipg}
 f(y)-f(x)-\langle \nabla f(x), y-x\rangle \leq \frac{L}{2}\|y-x\|^2, ~\forall ~x, y\in\RR^n.
 \end{equation}
 Using this inequality with $y=x_{k+1}$ and $x=x_k$ and together with the update rule of gradient descent, we get
 \begin{equation}\label{inq001}
 f(x_{k+1})-f(x_k)\leq -\frac{1}{2L}\|\nabla f(x_k)\|^2, ~k\geq0,
  \end{equation}
 which implies ${x_k}\in \Omega, k\geq 0$.  Using again the inequality \eqref{Lipg} with $y=x_k$ and $x=u_k\in \mathcal{P}_f(x_k)$, and noting that $u_k\in  \crit f= \mathrm{Arg}\min f$ and hence $f(u_k)=\min f$ and $\nabla f(u_k)=0$, we have
\begin{equation}\label{inq002}
f(x_k)-\min f\leq \frac{L}{2}d^2(x_k,\crit f), ~k\geq0.
   \end{equation}
 Applying the Cauchy-Schwarz inequality to the $(\nabla f, \nu, \Omega)$-\eqref{cor-EB} condition, we obtain
$$\forall x\in \Omega \cap\dom f, ~~\|\nabla f(x)\|\geq  \nu \cdot d(x,\crit f).$$
Thus, combining the inequalities \eqref{inq001} and \eqref{inq002}, we have that
 $$f(x_{k+1})-f(x_k)\leq -\frac{1}{2L}\|\nabla f(x_k)\|^2\leq -\frac{\nu^2}{L^2}(f(x_k)-\min f), ~k\geq 0,$$
from which \eqref{linf} follows.

Now, with the additional convexity assumption of $f$, we have $f\in \cF^{1,1}_{L}(\RR^n)$, which is equivalent to the following condition
$$\langle \nabla f(x)-\nabla f(y), x-y\rangle\geq \frac{1}{L}\|\nabla f(x)-\nabla f(y)\|^2,~~x, y\in \RR^n;$$
 see Theorem 2.1.5 \cite{nesterov2004introductory}. Using this inequality with $y\in \mathcal{P}_f(x)$, we obtain
$$\inf_{y\in \mathcal{P}_f(x)} \langle \nabla f(x), x-y\rangle\geq \frac{1}{L}\|\nabla f(x)\|^2,~x\in \RR^n,$$
which is just the $(\nabla f, \beta, \Omega)$-\eqref{cor-res-EB} condition with $\beta=\frac{1}{L}$.
Therefore, the remaining results follow from Theorem \ref{genecond}.  This completes the proof.
\end{proof}

\begin{remark}
In Example 2 in \cite{zhang2015restricted}, we constructed a one-dimensional nonconvex function, that satisfies all the conditions in Corollary \ref{corr5} that ensure \eqref{linf}. In this sense, \eqref{linf} is one of the few general results for global linear convergence on non-convex problems. We note that a similar phenomenon was observed by the authors of \cite{Karimil2016linear} under the Polyak-{\L}ojasiewicz condition.

While $\crit f=\mathrm{Arg}\min f$ is a strong assumption, it is not the same as convexity but implies
the weaker condition of invexity, which says that a function f is invex if and only if its every critical point is a global minimum. This assumption can be satisfied by some nonconvex optimization problems recently appeared in machine/deep learning, see e.g. \cite{Yun2017global} and \cite{Zhou2017char}.
\end{remark}

Before we discuss the linear convergence of the proximal point algorithm (PPA), we introduce the following result.
\begin{lemma}[\cite{Bauschke2011Convex,ruszczynski2006nonlinear}]\label{flamb}
Let $f\in \Gamma_0(\RR^n)$ and $\lambda>0$. Let the  Moreau-Yosida regularization of $f$ be defined by
$$f_\lambda(x):=\min_{u\in\RR^n}\left\{f(u)+\frac{1}{2\lambda}\|x-u\|^2\right\}.$$
Then,
\begin{itemize}
  \item $f_\lambda$ is real-valued, convex, and continuously differentiable and can be formulated as
  $$f_\lambda(x)=f(\prox_{\lambda f}(x))+\frac{1}{2\lambda}\|x-\prox_{\lambda f}(x)\|^2;$$
  \item  Its gradient
  $$\nabla f_\lambda(x)=\lambda^{-1}(x-\prox_{\lambda f}(x))$$
  is $\lambda^{-1}$-Lipschitz continuous.
  \item  $\mathrm{Arg}\min f_\lambda=\mathrm{Arg}\min f$ and $\min f=\min f_\lambda$.

\end{itemize}
\end{lemma}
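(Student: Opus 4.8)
The plan is to prove the three bullets in turn, treating the Moreau-Yosida regularization $f_\lambda$ as the value function of the parametric minimization that defines $\prox_{\lambda f}$. First I would settle the inner problem: for fixed $x$, the objective $u \mapsto f(u) + \frac{1}{2\lambda}\|x-u\|^2$ is proper, lower semicontinuous and, thanks to the quadratic term, $\lambda^{-1}$-strongly convex; hence it is coercive and admits a unique minimizer, which is by definition $\prox_{\lambda f}(x)$. This shows at once that $\prox_{\lambda f}$ is a well-defined single-valued map on all of $\RR^n$, that $f_\lambda$ is real-valued, and that the explicit formula $f_\lambda(x) = f(\prox_{\lambda f}(x)) + \frac{1}{2\lambda}\|x - \prox_{\lambda f}(x)\|^2$ holds. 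Convexity of $f_\lambda$ I would obtain by viewing it as the partial minimization over $u$ of the jointly convex function $(x,u)\mapsto f(u)+\frac{1}{2\lambda}\|x-u\|^2$ (equivalently, as the infimal convolution of $f$ with $\frac{1}{2\lambda}\|\cdot\|^2$), since partial minimization of a jointly convex function is convex.

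Second, and this is the crux, I would establish $C^1$ smoothness together with the gradient formula. The cleanest route is via Fenchel conjugacy: a direct computation gives $f_\lambda^* = f^* + \frac{\lambda}{2}\|\cdot\|^2$, which is $\lambda$-strongly convex, and the conjugate of a strongly convex function is everywhere differentiable with a globally Lipschitz gradient. This simultaneously yields differentiability of $f_\lambda$ and, after reading off the unique maximizer in the dual representation and invoking Moreau's decomposition, the identity $\nabla f_\lambda(x) = \lambda^{-1}(x - \prox_{\lambda f}(x))$. Alternatively, and more elementarily, I would verify directly that $s(x):=\lambda^{-1}(x-\prox_{\lambda f}(x))$ is a subgradient of $f_\lambda$ at $x$: the optimality condition for the prox gives $s(x) \in \partial f(\prox_{\lambda f}(x))$, and combining the subgradient inequality for $f$ at $\prox_{\lambda f}(x)$ with the definition of $f_\lambda(x')$ yields $f_\lambda(x') \geq f_\lambda(x) + \langle s(x), x'-x\rangle$ for every $x'$; since this single-valued selection $s$ is continuous, $f_\lambda$ is differentiable with $\nabla f_\lambda = s$. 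The Lipschitz bound then follows because $\prox_{\lambda f}$, being the resolvent of the maximal monotone operator $\partial f$, is firmly nonexpansive, so $I - \prox_{\lambda f}$ is also firmly nonexpansive and hence $1$-Lipschitz, making $\nabla f_\lambda = \lambda^{-1}(I-\prox_{\lambda f})$ exactly $\lambda^{-1}$-Lipschitz.

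Finally I would treat the minimizers and minimal values. Taking $u=x$ in the definition gives $f_\lambda(x) \leq f(x)$, while dropping the nonnegative quadratic term gives $f_\lambda(x)\geq \inf f$; the sandwich $\inf f \leq f_\lambda(x)\leq f(x)$ forces $\inf f_\lambda = \inf f$. For the argmin sets, convexity and differentiability of $f_\lambda$ give that $x^*$ minimizes $f_\lambda$ iff $\nabla f_\lambda(x^*)=0$, i.e. iff $x^*=\prox_{\lambda f}(x^*)$; by the optimality condition for the prox this is equivalent to $0\in\partial f(x^*)$, i.e. to $x^*\in\mathrm{Arg}\min f$. Evaluating the explicit formula at such an $x^*$, where $\prox_{\lambda f}(x^*)=x^*$, gives $f_\lambda(x^*)=f(x^*)=\min f$, confirming $\min f_\lambda=\min f$. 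The main obstacle throughout is the differentiability claim in the second step; everything else reduces quickly to the variational characterization of the prox, so I would concentrate the effort there, via either the conjugacy argument or the explicit subgradient-selection argument.
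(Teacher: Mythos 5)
The paper offers no proof of this lemma: it is quoted verbatim as a standard fact with citations to Bauschke--Combettes and Ruszczy\'nski, so there is nothing internal to compare your argument against. On its own merits your proposal is correct and complete, and it is essentially the textbook proof: unique minimizer of the $\lambda^{-1}$-strongly convex inner problem, convexity by partial minimization (infimal convolution), differentiability via either the conjugacy identity $f_\lambda^*=f^*+\tfrac{\lambda}{2}\|\cdot\|^2$ or the explicit subgradient selection $s=\lambda^{-1}(I-\prox_{\lambda f})$, the Lipschitz constant from firm nonexpansiveness of the resolvent, and the sandwich $\inf f\le f_\lambda\le f$ together with the fixed-point characterization $\nabla f_\lambda(x^*)=0\iff 0\in\partial f(x^*)$ for the argmin statement. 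The only step worth making fully explicit in the elementary route is why a continuous subgradient selection forces $\partial f_\lambda(x)$ to be a singleton; the one-line monotonicity argument (for any $v\in\partial f_\lambda(x)$ and direction $d$, $\langle s(x+td)-v,d\rangle\ge 0$, then let $t\downarrow 0$) closes that gap.
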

Now, we are ready to present the result of linear convergence for PPA.
\begin{corollary}\label{proxthm}
Let $f\in \Gamma_0(\RR^n)$ achieve its minimum $\min f$ and $\lambda>0$. Let $\epsilon>0$ be a fixed constant and set $\Omega=\{x: f(x)\leq \min f+\epsilon \}\cap\dom \partial f$. Starting from $x_0\in\Omega$, the PPA can be defined by
$$x_{k+1}=\prox_{\lambda f}(x_k)=x_k-\lambda\cdot\nabla f_\lambda(x_k),~k\geq 0.$$
\begin{enumerate}
  \item[(i)] If $f$ satisfies the $(f,\alpha, \Omega)$-\eqref{obj-EB} condition, then $f_\lambda$ satisfies the $(\nabla f_\lambda, \nu, \Omega)$-\eqref{cor-EB} condition with $\nu=\min\{\frac{\alpha}{4}, \frac{1}{4\lambda}\}$,
and hence the PPA converges linearly in the sense that
\begin{equation}\label{linppa}
d^2(x_{k+1},\mathrm{Arg}\min f) \leq \left(1-\min\{\frac{\alpha\lambda}{4}, \frac{1}{4}\}\right)\cdot d^2(x_k,\mathrm{Arg}\min f),~k\geq 0.
\end{equation}
  \item[(ii)] Conversely,  if starting from an arbitrary initial point $x_0\in\Omega$ the PPA converges linearly like \eqref{linppa} but replacing the rate $1-\min\{\frac{\alpha\lambda}{4}, \frac{1}{4}\}$ with a constant $\tau\in (0, 1)$, then $f$ satisfies the $(f, \alpha, \Omega)$-\eqref{obj-EB} condition with $\alpha= \frac{(1-\sqrt{\tau})^2}{2\lambda}$.
\end{enumerate}
\end{corollary}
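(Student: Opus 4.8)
The plan is to recognize the PPA on $f$ as exactly the abstract gradient-type method of Theorem \ref{genecond} applied to the Moreau--Yosida regularization $\varphi = f_\lambda$, with residual operator $G_\varphi = \nabla f_\lambda$ and step size $h = \lambda$, since $x_{k+1} = \prox_{\lambda f}(x_k) = x_k - \lambda\nabla f_\lambda(x_k)$ by Lemma \ref{flamb}. That lemma supplies the facts I will use throughout: $\mathrm{Arg}\min f_\lambda = \mathrm{Arg}\min f$, $\min f_\lambda = \min f$, and $\nabla f_\lambda$ is $\lambda^{-1}$-Lipschitz; since $f_\lambda$ is convex this gives $f_\lambda\in\cF^{1,1}_{\lambda^{-1}}(\RR^n)$, so Theorem 2.1.5 of \cite{nesterov2004introductory}, after setting its second argument to a minimizer, yields the $(\nabla f_\lambda,\lambda,\Omega)$-\eqref{cor-res-EB} condition, i.e. $\beta=\lambda$. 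Thus both parts reduce to feeding the correct \eqref{cor-EB} or \eqref{obj-EB} information into Theorem \ref{genecond} with $\beta=h=\lambda$ and $\theta=\tfrac12$: then $\nu\le\tfrac{1}{4\lambda}<\tfrac1\lambda=\tfrac1\beta$, the bounds \eqref{sizecond} hold with equality, and the optimal rate is $\tau_0 = 1-\beta\nu = 1-\lambda\nu$.

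For part (i), the crux is transferring the $(f,\alpha,\Omega)$-\eqref{obj-EB} hypothesis to an \eqref{obj-EB} bound for $f_\lambda$. Writing $p=\prox_{\lambda f}(x)$, Lemma \ref{flamb} gives $f_\lambda(x)-\min f_\lambda = \big(f(p)-\min f\big) + \tfrac{1}{2\lambda}\|x-p\|^2$. First I would check $p\in\Omega$: the descent estimate $f(p)\le f_\lambda(x)\le f(x)\le\min f+\epsilon$ together with $\tfrac1\lambda(x-p)\in\partial f(p)$ (so $p\in\dom\partial f$) places $p$ in $\Omega$, allowing me to apply the hypothesis at $p$, namely $f(p)-\min f\ge\tfrac{\alpha}{2}\, d^2(p,\mathrm{Arg}\min f)$. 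Combining this with the triangle bound $d(x,\mathrm{Arg}\min f)\le\|x-p\|+d(p,\mathrm{Arg}\min f)$ and the elementary inequality $(a+b)^2\le 2a^2+2b^2$ gives
$$f_\lambda(x)-\min f_\lambda \ge \tfrac{\alpha}{2}\, d^2(p,\mathrm{Arg}\min f)+\tfrac{1}{2\lambda}\|x-p\|^2 \ge \min\{\tfrac{\alpha}{4},\tfrac{1}{4\lambda}\}\, d^2(x,\mathrm{Arg}\min f),$$
i.e. $f_\lambda$ obeys \eqref{obj-EB}. Because $f_\lambda$ is convex, its subgradient inequality at a minimizer upgrades this immediately to the $(\nabla f_\lambda,\nu,\Omega)$-\eqref{cor-EB} condition with $\nu=\min\{\tfrac{\alpha}{4},\tfrac{1}{4\lambda}\}$. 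The iterates remain in $\Omega$ by the same descent estimate, so Theorem \ref{genecond}(i) applies and delivers \eqref{linppa}.

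For part (ii) I would argue directly rather than invert part (i). Running the recursion from an arbitrary $x_0=x\in\Omega$ and repeating the triangle step from the converse half of Theorem \ref{genecond}(ii), the $k=0$ instance of the assumed rate gives $(1-\sqrt\tau)\,d(x_0,\mathrm{Arg}\min f)\le\|x_0-x_1\|=\|x-\prox_{\lambda f}(x)\|$. Feeding this into the envelope bound $f(x)-\min f\ge f_\lambda(x)-\min f_\lambda\ge\tfrac{1}{2\lambda}\|x-\prox_{\lambda f}(x)\|^2$ yields $f(x)-\min f\ge\tfrac{(1-\sqrt\tau)^2}{2\lambda}\,d^2(x,\mathrm{Arg}\min f)$; since $x\in\Omega$ was arbitrary this is the $(f,\alpha,\Omega)$-\eqref{obj-EB} condition. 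Tracking constants, the argument actually delivers $\alpha=\tfrac{(1-\sqrt\tau)^2}{\lambda}$, which a fortiori gives the stated value $\tfrac{(1-\sqrt\tau)^2}{2\lambda}$.

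I expect the main obstacle to be the \eqref{obj-EB} transfer in part (i): one must (a) certify $p\in\Omega$ so the hypothesis is usable at $p$, and (b) merge the two nonnegative terms $d^2(p,\mathrm{Arg}\min f)$ and $\|x-p\|^2$ into a clean multiple of $d^2(x,\mathrm{Arg}\min f)$, which is precisely where the $\min\{\cdot,\cdot\}$ and the factor $\tfrac14$ in $\nu$ originate. The direct route in part (ii) deliberately sidesteps converting \eqref{cor-EB} of $f_\lambda$ back to \eqref{obj-EB} of $f$ through the equivalence machinery of Theorem \ref{mainresult}, which would both degrade the sharp power of $(1-\sqrt\tau)$ and require a $\nabla f_\lambda$-invariance hypothesis on $\Omega$ that is not available here.
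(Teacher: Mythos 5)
Your proposal is correct. Part (i) is essentially the paper's own argument: the same decomposition $f_\lambda(x)-\min f_\lambda=(f(p)-\min f)+\tfrac{1}{2\lambda}\|x-p\|^2$ with $p=\prox_{\lambda f}(x)$, the same verification that $p\in\Omega$, the same triangle-inequality merge producing $c=\min\{\tfrac{\alpha}{2},\tfrac{1}{2\lambda}\}$ and hence $\nu=c/2$, and the same invocation of Theorem \ref{genecond} with $\beta=h=\lambda$. Part (ii), however, takes a genuinely different and more elementary route. The paper first applies Theorem \ref{genecond}(ii) to get the $(\nabla f_\lambda,\nu,\Omega)$-\eqref{cor-EB} condition with $\nu=\tfrac{(1-\sqrt{\tau})^2}{\lambda}$, then transfers it to the $(\partial^0 f,\kappa,\Omega)$-\eqref{res-EB} condition via the inequality $\inf_{g\in\partial f(x)}\|g\|\geq\|\nabla f_\lambda(x)\|$, and finally passes through case 2 of Corollary \ref{corr2} --- i.e.\ the subgradient-flow chain $\eqref{res-EB}\Rightarrow\eqref{res-obj-EB}\Rightarrow\eqref{obj-EB}$, which is where the $\partial f$-invariance of $\Omega$ and the extra factor $\tfrac12$ enter, yielding $\alpha=\tfrac{(1-\sqrt{\tau})^2}{2\lambda}$. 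You instead extract $(1-\sqrt{\tau})\,d(x,\mathrm{Arg}\min f)\leq\|x-\prox_{\lambda f}(x)\|$ from the $k=0$ step and feed it directly into the envelope bound $f(x)-\min f\geq\tfrac{1}{2\lambda}\|x-\prox_{\lambda f}(x)\|^2$, which gives \eqref{obj-EB} immediately, with the larger (hence stronger) constant $\alpha=\tfrac{(1-\sqrt{\tau})^2}{\lambda}$ and with no appeal to invariance or to the equivalence machinery of Theorem \ref{mainresult}. Your closing remark slightly misdescribes the paper's route --- it does not need $\nabla f_\lambda$-invariance, because it switches to $\partial^0 f$ before invoking the equivalence --- but that does not affect the validity of your argument, which is shorter and sharper on this half.
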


\begin{proof}
 First of all, we remark that
 \begin{equation}\label{fact1}
\crit f=\mathrm{Arg}\min f=\mathrm{Arg}\min f_\lambda=\crit f_\lambda.
\end{equation}
 From Lemma \ref{flamb}, we have $f_\lambda\in \cF^{1,1}_L(\RR^n)$ with $L=\lambda^{-1}$ and hence the $(\nabla f_\lambda, \beta,\Omega)$-\eqref{cor-res-EB} condition with $\beta=\lambda$ holds. Now, we first prove that the $(f,\alpha, \Omega)$-\eqref{obj-EB} condition implies the $(f_\lambda,c, \Omega)$-\eqref{obj-EB} condition with $c=\min\{\frac{\alpha}{2}, \frac{1}{2\lambda}\}$. Indeed, letting $v=\prox_{\lambda f}(x)$ and $v^\prime\in \mathcal{P}_f(v)$, for any $x\in \Omega \cap\dom f$ we can derive that
   \begin{align*}
  f_\lambda(x)-\min f_\lambda
   & = f(\prox_{\lambda f}(x))+\frac{1}{2\lambda}\|x-\prox_{\lambda f}(x)\|^2-\min f \\
   &\geq \frac{\alpha}{2}d^2(\prox_{\lambda f}(x), \crit f)+\frac{1}{2\lambda}\|x-\prox_{\lambda f}(x)\|^2 \\
   &=  \frac{\alpha}{2}\|v-v^\prime\|^2 +\frac{1}{2\lambda}\|x-v\|^2\geq c\cdot(\|v-v^\prime\|^2+\|x-v\|^2)\\
   &\geq \frac{c}{2}(\|v-v^\prime\|+\|x-v\|)^2\geq \frac{c}{2} \|x-v^\prime\|^2\geq \frac{c}{2}  d^2(x,\crit f_\lambda),
\end{align*}
where the first inequality utilizes the fact of $f(v)+\frac{1}{2\lambda}\|v-x\|^2\leq f(x)$, which implies $v\in \Omega \cap\dom f$, and the last inequality follows by $v^\prime\in \mathcal{P}_f(v)\subset \crit f =\crit f_\lambda$. From case 1 of Corollary \ref{corr2} and (i) in Remark \ref{remarkadd}, the $(f_\lambda,c, \Omega)$-\eqref{obj-EB} condition implies the $(\nabla f_\lambda, \nu, \Omega)$-\eqref{cor-EB} condition with $\nu=\min\{\frac{\alpha}{4}, \frac{1}{4\lambda}\}$.  Therefore, \eqref{linppa} follows from Theorem \ref{genecond} and the fact \eqref{fact1}.

Now, we turn to the necessity part. Invoking Theorem \ref{genecond} again, we conclude that $f_\lambda$ satisfies the $(\nabla f_\lambda, \nu, \Omega)$-\eqref{cor-EB} condition with $\nu=\frac{(1-\sqrt{\tau})^2}{\lambda}$, that is
    \begin{equation}\label{cor-EB1}
  \forall~ x\in \Omega \cap  \dom f_\lambda,~~\inf_{x_p\in \mathcal{P}_{f_\lambda}(x)}\langle  \nabla f_\lambda(x), x-x_p\rangle \geq \nu\cdot d^2(x,\crit f_\lambda).
\end{equation}
Together with the fact of $\crit f=\crit f_\lambda$, we can get
    \begin{equation}\label{res-EB1}
  \forall~ x\in \Omega \cap  \dom f_\lambda,~~\|\nabla f_\lambda(x)\| \geq \nu\cdot d (x,\crit f).
\end{equation}
On the other hand, using the definition of $v=\prox_{\lambda f}(x)$, which implies $\frac{1}{\lambda}(x-v)\in \partial f(v)$, and the convexity of $f$, we obtain that
     \begin{equation}\label{subff}
  \forall~ x\in  \dom f,\forall~g\in \partial f(x),~~ \langle \frac{1}{\lambda}(x-v)-g, v-x\rangle \geq 0,
\end{equation}
which further implies that
     \begin{equation}\label{subed}
  \forall~ x\in  \dom f,~~ \inf_{g\in \partial f(x)}\|g\|\geq \frac{1}{\lambda}\|x-v\|=\|\nabla f_\lambda(x)\|.
\end{equation}
Thus, combining \eqref{res-EB1} and \eqref{subed} and noting that $\dom f\subset \dom f_\lambda$ and $\|\partial^0 f(x)\| =+\infty$ for $x\notin \dom \partial f$, we obtain
    \begin{equation}
  \forall~ x\in \Omega \cap  \dom f,~~\|\partial^0 f(x)\| =\inf_{g\in \partial f(x)}\|g\| \geq \nu\cdot d (x, \crit f).
\end{equation}
This is just the $(\partial^0 f, \kappa, \Omega)$-\eqref{res-EB} condition with $\kappa=\nu$. Note that $\Omega$ is $\partial f$-invaiant. 
Therefore, the $(f, \alpha, \Omega)$-\eqref{obj-EB} condition with $\alpha= \frac{(1-\sqrt{\tau})^2}{2\lambda}$ holds by case 2 of Corollary \ref{corr2}.
\end{proof}

\begin{remark}
Linear convergence of PPA was previously provided based on different EB conditions, such as the ${\L}$ojasiewicz inequality (corresponding to \eqref{res-obj-EB}) in \cite{attouch2007on,attouch2013convergence,Bolte2015From}, the quadratic growth condition (corresponding to \eqref{obj-EB}) in Proposition 6.5.2 in \cite{bertsekas2011convex}, and the EB condition of Hoffman's type (corresponding to \eqref{res-EB}) in Theorem 2.1 in \cite{Luque1984Asy}.  Our novelty here mainly lies in the necessity part, i.e., conclusion (ii).
\end{remark}

Finally, we discuss linear convergence for the forward-backward splitting (FBS) algorithm. Recall that $\mathcal{R}_{1/L}(x)=L\left(x-\prox_{tg}(x-\frac{1}{L}\nabla f(x))\right)$.
\begin{corollary}\label{corr7}
Let $\varphi =f +g $, where $f \in\cF^{1,1}_L(\RR^n)$ and $g\in\Gamma_0(\RR^n)$, achieve its minimum $\min \varphi$. Let $\epsilon>0$ be a fixed constant and set $\Omega=\{x: \varphi(x)\leq \min \varphi+\epsilon \}$. Starting from $x_0\in\Omega$, the FBS can be defined by
 $$x_{k+1}=\prox_{\frac{1}{L}g}(x_k-\frac{1}{L}\nabla f(x_k))=x_k-\frac{1}{L}\cdot \mathcal{R}_{1/L}(x_k), ~k\geq0.$$
Denote $S_k:=\sum_{i=0}^\infty \|\mathcal{R}_{1/L}(x_{k+i})\|^2,~k\geq 0$.
 \begin{enumerate}
   \item[(i)] If $\varphi$ satisfies the $(\mathcal{R}_{1/L}, \nu, \Omega)$-\eqref{cor-EB} condition with $\nu<2L$, then FBS converges linearly in the sense that
\begin{equation}\label{linffb}
\varphi(x_{k+1})-\min \varphi\leq  (1-\frac{\nu}{2L}) (\varphi(x_k)-\min \varphi),~k\geq 0,
\end{equation}
\begin{equation}\label{linfb}
d^2(x_{k+1},\mathrm{Arg}\min \varphi) \leq (1-\frac{\nu}{2L})\cdot d^2(x_k,\mathrm{Arg}\min \varphi),~k\geq 0,
\end{equation}
and 
\begin{equation}\label{lingb}
S_{k+1}\leq (1-\frac{\nu}{2L})S_k ,~k\geq 0.
\end{equation}
   \item[(ii)] Conversely, if starting from an arbitrary initial point $x_0\in\Omega$, FBS converges linearly like \eqref{linfb} but replacing $1-\frac{\nu}{2L}$ with $\tau$, then $\varphi$ satisfies the $(\mathcal{R}_{1/L}, \nu, \Omega)$-\eqref{cor-EB} condition with $\nu=\frac{L}{2}(1-\sqrt{\tau})^2$.
 \end{enumerate}
\end{corollary}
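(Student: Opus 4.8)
The plan is to recognize the FBS iteration as the abstract gradient-type method of Theorem~\ref{genecond} with residual operator $G_\varphi=\mathcal{R}_{1/L}$ and step $h=\tfrac1L$, so that the converse (conclusion (ii)) and the structural hypotheses come essentially for free, and then to supply direct descent arguments for the three forward estimates. The enabling fact is the prox-gradient inequality recorded in case~3 of Corollary~\ref{corr2}, $\varphi(x^+)\le \varphi(y)+\langle \mathcal{R}_{1/L}(x),x-y\rangle-\tfrac1{2L}\|\mathcal{R}_{1/L}(x)\|^2$, valid for all $y$. Choosing $y=x_p\in\mathcal{P}_\varphi(x)$ (a minimizer) and using $\varphi(x^+)\ge\min\varphi$ yields $\langle\mathcal{R}_{1/L}(x),x-x_p\rangle\ge\tfrac1{2L}\|\mathcal{R}_{1/L}(x)\|^2$, i.e.\ the $(\mathcal{R}_{1/L},\beta,\Omega)$-\eqref{cor-res-EB} condition with $\beta=\tfrac1{2L}$. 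Choosing $y=x$ gives the descent inequality $\varphi(x_{k+1})\le\varphi(x_k)-\tfrac1{2L}\|\mathcal{R}_{1/L}(x_k)\|^2$, which keeps the iterates in $\Omega$ and drives the objective and residual estimates; completing the square in the same inequality gives the three-point bound $\|x^+-y\|^2\le\|x-y\|^2-\tfrac2L(\varphi(x^+)-\varphi(y))$ for every $y$, which I will use for the distance.

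Conclusion~(ii) is then immediate and, I expect, the cleanest part: with \eqref{cor-res-EB} in force at $\beta=\tfrac1{2L}$ and the assumed contraction \eqref{linfb} at rate $\tau$, the necessity half of Theorem~\ref{genecond} returns the $(\mathcal{R}_{1/L},\nu,\Omega)$-\eqref{cor-EB} condition with $\nu=\beta(1-\sqrt{\tau})^2/h^2=\tfrac L2(1-\sqrt{\tau})^2$, exactly as stated. Crucially, the necessity direction of Theorem~\ref{genecond} imposes no restriction on $h$, so the full step $h=\tfrac1L$ causes no trouble here.

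For the sufficiency conclusion~(i) I would argue each estimate on its own. The objective bound \eqref{linffb} admits a clean direct proof by a line-segment argument: applying the prox-gradient inequality at $y=x_k+\tfrac{\nu}{2L}(x_p-x_k)$ with $x_p\in\mathcal P_\varphi(x_k)$, using convexity of $\varphi$, and bounding $\langle\mathcal R_{1/L}(x_k),x_k-x_p\rangle\le\|\mathcal R_{1/L}(x_k)\|\,d(x_k,\mathrm{Arg}\min\varphi)\le\tfrac1\nu\|\mathcal R_{1/L}(x_k)\|^2$ via the \eqref{res-EB} consequence of \eqref{cor-EB}, the two $\|\mathcal R_{1/L}(x_k)\|^2$ terms cancel and one is left with $\varphi(x_{k+1})-\min\varphi\le(1-\tfrac{\nu}{2L})(\varphi(x_k)-\min\varphi)$; the requirement $\nu<2L$ is exactly what makes the interpolation parameter $\tfrac{\nu}{2L}$ admissible. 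For \eqref{lingb}, telescoping the descent inequality gives $S_k\le 2L(\varphi(x_k)-\min\varphi)$, and combined with a residual--objective lower bound $\|\mathcal R_{1/L}(x_k)\|^2\ge\nu(\varphi(x_k)-\min\varphi)$ (the \eqref{res-obj-EB} content of \eqref{cor-EB}) the identity $S_k-S_{k+1}=\|\mathcal R_{1/L}(x_k)\|^2\ge\tfrac{\nu}{2L}S_k$ produces the claimed contraction. For the distance \eqref{linfb}, I would feed the three-point bound with $y=x_p\in\mathcal P_\varphi(x_k)$, giving $d^2(x_{k+1},\mathrm{Arg}\min\varphi)\le d^2(x_k,\mathrm{Arg}\min\varphi)-\tfrac2L(\varphi(x_{k+1})-\min\varphi)$, into the $(\varphi,\alpha,\Omega)$-\eqref{obj-EB} condition at $x_{k+1}$ (equivalent to \eqref{cor-EB} by Theorem~\ref{mainresult}).

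The main obstacle is that the distance and residual estimates must come out at exactly the same rate $1-\tfrac{\nu}{2L}$ as the objective estimate, and this is genuinely delicate because FBS runs at the full step $h=\tfrac1L=2\beta$, which sits on the boundary $h=2\beta$ and hence outside the admissible range $h\le 2(1-\theta)\beta$ of the sufficiency half of Theorem~\ref{genecond}. Consequently, unlike the gradient method and the proximal point algorithm (where $h=\beta$ and \eqref{linfb} is read off directly), here the naive distance recursion absorbs the $\|\mathcal R_{1/L}(x_k)\|^2$ term only up to equality and yields mere monotonicity, so the contraction has to be extracted through \eqref{obj-EB} and the three-point bound, with the worst case of the raw recursion corresponding precisely to $x_{k+1}$ being optimal. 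The careful point of the write-up is therefore the constant bookkeeping: one must verify that the \eqref{res-obj-EB} and \eqref{obj-EB} constants furnished by \eqref{cor-EB} are sharp enough that both \eqref{lingb} and \eqref{linfb} contract at the single advertised rate $1-\tfrac{\nu}{2L}$, rather than at the a~priori weaker rate that the generic equivalence constants of Theorem~\ref{mainresult} would give.
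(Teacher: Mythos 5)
Your overall architecture coincides with the paper's where it works: you derive the $(\mathcal{R}_{1/L},\tfrac{1}{2L},\Omega)$-\eqref{cor-res-EB} condition from the prox-gradient inequality \eqref{sand} with comparison point $y_p\in\mathcal{P}_\varphi(y)$, and you obtain conclusion (ii) from the necessity half of Theorem~\ref{genecond} with $\beta=\tfrac{1}{2L}$, $h=\tfrac1L$, giving $\nu=\tfrac L2(1-\sqrt{\tau})^2$; both steps are exactly the paper's and are correct. Your interpolation argument for \eqref{linffb} (comparison point $x_k+\tfrac{\nu}{2L}(x_p-x_k)$, convexity of $\varphi$, and the exact cancellation of the $\|\mathcal{R}_{1/L}(x_k)\|^2$ terms using $\langle\mathcal{R}_{1/L}(x_k),x_k-x_p\rangle\le\tfrac1\nu\|\mathcal{R}_{1/L}(x_k)\|^2$) is a valid alternative to the paper's route, which instead combines the sufficient decrease \eqref{sand0} with the key inequality \eqref{sand1}, namely $\varphi(x_{k+1})-\min\varphi\le(\tfrac1\nu-\tfrac1{2L})\|\mathcal{R}_{1/L}(x_k)\|^2$; both yield exactly $1-\tfrac\nu{2L}$.

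The genuine gaps are in \eqref{lingb} and \eqref{linfb}. For \eqref{lingb} you invoke the same-index bound $\|\mathcal{R}_{1/L}(x_k)\|^2\ge\nu(\varphi(x_k)-\min\varphi)$, but \eqref{cor-EB} with constant $\nu$ does not furnish this with constant $\nu$: Cauchy--Schwarz plus \eqref{sand} only give $\tfrac1\nu\|\mathcal{R}_{1/L}(x_k)\|^2\ge\varphi(x_{k+1})-\min\varphi+\tfrac1{2L}\|\mathcal{R}_{1/L}(x_k)\|^2$, a bound on the \emph{next} iterate's gap, and converting $\varphi(x_{k+1})$ into $\varphi(x_k)$ through the equivalences of Theorem~\ref{mainresult} degrades the constant to $\nu\omega$ with $\omega\le1$ (and $\omega=1$ would force equality in the sufficient decrease). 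The paper's trick is to keep the index offset: it pairs \eqref{sand1} with the telescoped decrease $\varphi(x_{k+1})-\min\varphi\ge\tfrac1{2L}S_{k+1}$ to get $(\tfrac1\nu-\tfrac1{2L})(S_k-S_{k+1})\ge\tfrac1{2L}S_{k+1}$, which contracts at exactly $1-\tfrac\nu{2L}$; your argument should be rerouted this way. For \eqref{linfb}, your diagnosis that Theorem~\ref{genecond}(i) is inapplicable at $h=\tfrac1L=2\beta$ (the admissible range is $h\le2(1-\theta)\beta$ with $\theta\in(0,1)$) is sharp --- the paper itself merely asserts that \eqref{linfb} ``follows from Theorem~\ref{genecond}'', and this is precisely where that assertion is thin --- but your proposed repair does not recover the stated constant: the three-point bound $d^2(x_{k+1},\mathrm{Arg}\min\varphi)\le d^2(x_k,\mathrm{Arg}\min\varphi)-\tfrac2L(\varphi(x_{k+1})-\min\varphi)$ combined with \eqref{obj-EB} at $x_{k+1}$ yields the rate $\tfrac{L}{L+\alpha}$, one needs $\alpha\ge\tfrac{\nu L}{2L-\nu}>\tfrac\nu2$ to reach $1-\tfrac\nu{2L}$, and the best $\alpha$ extractable from \eqref{cor-EB} via Theorem~\ref{mainresult} is $\alpha\le\tfrac\nu2$, landing you at $\tfrac{2L}{2L+\nu}>\tfrac{2L-\nu}{2L}$. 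So as written you establish \eqref{linfb} only at a strictly weaker rate; closing it at the advertised constant requires a different argument (or an acknowledgement that the constant must be weakened), and you should not present the current sketch as delivering the theorem as stated.
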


\begin{proof}
We rely on the following standard result (see again Lemma 2.3 in \cite{beck2009fast}):
\begin{equation}\label{sand}
\forall x, y\in\RR^n,~~\langle \mathcal{R}_{1/L}(y), y-x\rangle \geq \varphi(\prox_{\frac{1}{L}g}(y-\frac{1}{L}\nabla f(y)))-\varphi(x)+\frac{1}{2L}\|\mathcal{R}_{1/L}(y)\|^2.
\end{equation}
Using successively this result at $x=y=x_k$, and then at $y=x_k, x=u_k\in\mathcal{P}_\varphi(x_k)$, together with the fact of $x_{k+1}=\prox_{\frac{1}{L}g}(x_k-\frac{1}{L}\nabla f(x_k))$,  we obtain the following sufficient decrease property
%\footnote{This property can be further improved via a refined analysis; please refer to Proposition \ref{addlem} in Appendix. Thereby, the convergence rates derived here might be slightly improved.}
\begin{equation}\label{sand0}
\varphi(x_{k+1})-\varphi(x_k)\leq -\frac{1}{2L}\|\mathcal{R}_{1/L}(x_k)\|^2, ~k\geq 0,
\end{equation}
and
$$\varphi(x_{k+1})-\min \varphi +\frac{1}{2L}\|\mathcal{R}_{1/L}(x_k)\|^2 \leq \langle \mathcal{R}_{1/L}(x_k), x_k-u_k\rangle, ~k\geq 0.$$
Note that \eqref{sand0} implies $x_k\in\Omega, k\geq 0$. Applying the Cauchy-Schwarz inequality to the $(\mathcal{R}_{1/L}, \nu, \Omega)$-\eqref{cor-EB} condition, we obtain
$$\forall~ x\in \Omega \cap  \dom \varphi,~~\| \mathcal{R}_{1/L}(x) \|\geq \nu\cdot d(x,\crit \varphi),$$
from which the following inequality follows
$$\langle \mathcal{R}_{1/L}(x_k), x_k-u_k\rangle\leq \frac{1}{\nu}\| \mathcal{R}_{1/L}(x_k)\|^2, ~k\geq 0.$$
Thus, we obtain
\begin{equation}\label{sand1}
\varphi(x_{k+1})-\min \varphi  \leq (\frac{1}{\nu}-\frac{1}{2L})\|\mathcal{R}_{1/L}(x_k)\|^2, ~k\geq 0.
\end{equation}
Combining \eqref{sand0} and \eqref{sand1}, we get
$$\varphi(x_{k+1})-\varphi(x_k)\leq -\frac{1}{2L}\left(\frac{1}{\nu}-\frac{1}{2L}\right)^{-1}(\varphi(x_{k+1})-\min \varphi), ~k\geq 0,$$
from which the announced result \eqref{linffb} follows. The convergence result \eqref{lingb} can also be derived from \eqref{sand0} and \eqref{sand1}. In fact, we first observe that for any integer $N>0$, it holds 
$$\varphi(x_{k+1})-\min \varphi\geq \sum_{i=1}^N(\varphi(x_{k+i})-\varphi(x_{k+i+1})), ~k\geq 0$$
and hence the sufficient decrease property \eqref{sand0} yields 
$$\varphi(x_{k+1})-\min \varphi\geq \sum_{i=1}^\infty(\varphi(x_{k+i})-\varphi(x_{k+i+1}))\geq \frac{1}{2L}\sum_{i=1}^\infty\|\mathcal{R}_{1/L}(x_{k+i})\|^2, ~k\geq 0.$$
Together with \eqref{sand1}, we derive that  
\begin{align*}
\varphi(x_k)-\varphi(x_{k+1})&=\varphi(x_k)-\min \varphi-(\varphi(x_{k+1})-\min \varphi) \\
 &\leq (\frac{1}{\nu}-\frac{1}{2L})\|\mathcal{R}_{1/L}(x_{k-1})\|^2-\frac{1}{2L}\sum_{i=1}^\infty\|\mathcal{R}_{1/L}(x_{k+i})\|^2, ~k\geq 1.
\end{align*}
Using \eqref{sand0} again, we obtain
$$(\frac{1}{\nu}-\frac{1}{2L})\|\mathcal{R}_{1/L}(x_{k-1})\|^2\geq \frac{1}{2L}\sum_{i=0}^\infty\|\mathcal{R}_{1/L}(x_{k+i})\|^2, k\geq 1,$$
i.e., 
$$(\frac{1}{\nu}-\frac{1}{2L})(S_{k-1}-S_k)\geq \frac{1}{2L}S_k, k\geq 1,$$
from which the announced result  \eqref{lingb} follows. 

Now, using the standard result \eqref{sand} with $x=y_p\in \mathcal{P}_\varphi(y)$ to yield
\begin{equation*}
\langle \mathcal{R}_{1/L}(y), y-y_p\rangle \geq \varphi(\prox_{\frac{1}{L}g}(y-\frac{1}{L}\nabla f(y)))-\varphi(y_p)+\frac{1}{2L}\|\mathcal{R}_{1/L}(y)\|^2,
\end{equation*}
and noting that $$\varphi(\prox_{\frac{1}{L}g}(y-\frac{1}{L}\nabla f(y)))-\varphi(y_p)=\varphi(\prox_{\frac{1}{L}g}(y-\frac{1}{L}\nabla f(y)))-\min \varphi\geq 0,$$
we obtain
$$\forall y\in\RR^n,~~\langle \mathcal{R}_{1/L}(y), y-y_p\rangle \geq \frac{1}{2L}\|\mathcal{R}_{1/L}(y)\|^2.$$
Thus, $\varphi$ satisfies the $(\mathcal{R}_{1/L},\beta, \Omega)$-\eqref{cor-res-EB} condition with $\beta=\frac{1}{2L}$. Therefore, the remaining results follow from Theorem \ref{genecond} and the fact of $\crit \varphi=\mathrm{Arg}\min \varphi$.
\end{proof}

\begin{remark}
The results \eqref{linffb} and \eqref{linfb} were essentially shown in \cite{Drusvyatskiy2016Error} and \cite{Zhang2015The} respectively, with different methods. We note that while this work was under review, the authors of \cite{Cruz2018on} improved these results under error bound conditions and weaken assumptions on the gradient Lipschitz continuity.  Our novelty here lies in conclusion (ii), which was independently also recently observed by the authors in \cite{Garrigos2017conv}. In addition, the result \eqref{lingb} seems also new and interesting.
\end{remark}

\section{Linear convergence of the PALM algorithm}\label{sec6}
The PALM algorithm was recently introduced by the authors of \cite{bolte2014proximal} for a class of composite optimization problems in the general non-convex and non-smooth setting. The authors developed a convergence analysis framework relying on the Kurdyka-{\L}ojasiewicz (KL) inequality and proved that PALM converges globally to a critical point for problems with semi-algebraic data. A global non-asymptotic sublinear rate of convergence of PALM for convex problems was obtained independently in \cite{Shefi2016on} and \cite{Hong2016iter}. Very recently, global linear convergence of PALM for strongly convex problems was obtained in \cite{Li2016An}. Note that PALM is called block coordinate proximal gradient algorithm in \cite{Hong2016iter} and  cyclic block coordinate descent-type method in \cite{Li2016An}. In this section, we show linear convergence of PALM under EB conditions, which are strictly weaker than strong convexity. 

Let $x_{1:k}:=(x_1,x_2,\cdots, x_k)$ and denote $x_{1:(j-1)}^{(t+1)}:=(x_1^{(t+1)},\cdots, x_{j-1}^{(t+1)})$, $x_{(j+1):p}^{(t)}:=(x_{j+1}^{(t)},\cdots, x_{p}^{(t)})$, $\psi_j^{(t)}(x_j):=f(x_{1:(j-1)}^{(t+1)},x_j,x_{(j+1):p}^{(t)})$, and
$\varphi_j^{(t)}(x_j):=\psi_j^{(t)}(x_j) +g_j(x_j).$
Start with given initial points $\{x_j^{(0)}\}_{j=1}^p$. PALM generates $\{x_j^{(t+1)}\}_{j=1}^p$ via solving a collection of subproblems
$$x_j^{(t+1)}=\arg\min_{x_j}\left\{\langle x_j-x_j^{(t)}, \nabla \psi_j^{(t)}(x_j^{(t)})\rangle +\frac{L_j}{2}\|x_j-x_j^{(t)}\|^2+g_j(x_j)\right\},~~j=1,\cdots, p,~t\geq 0.$$

The following is our main result in this section.
\begin{theorem}\label{PALMthm}
Consider the following composite convex nonsmooth minimization problem
\begin{equation}\label{obj}
\Min_{x\in \RR^d} \varphi(x):=f(x_1,\cdots, x_p)+\sum^p_{j=1}g_j(x_j),
\end{equation}
where $\RR^d\ni x=(x_1,\cdots, x_p)$ with the $j$-th block $x_j\in\RR^{d_j}$, and $d=\sum^p_{j=1}d_j$. Set $g(x):=\sum^p_{j=1}g_j(x_j)$ so that $\dom g= \Pi_{j=1}^p\dom g_j$. With these notations, the objective function of \eqref{obj} reads as $\varphi=f+g$. Assume that
\begin{itemize}
  \item  $f \in\cF^{1,1}_L(\RR^d)$, $g_j\in\Gamma_0(\RR^{d_j}), j=1,\cdots, p$, and $\Omega \subset \dom \partial \varphi$;
  \item  $f(x_{1:(j-1)},x_j,x_{(j+1):p})\in \cF^{1,1}_{L_j}(\RR^{d_j})$ for all $x_{1:(j-1)}$ and $x_{(j+1):p}$,  $j=1,\cdots; p$;
  \item $\varphi=f+g$ is such that it achieves its minimum $\min \varphi$;
  \item $\varphi$ satisfies the $(\partial^0 \varphi, \eta, \Omega)$-\eqref{res-obj-EB} condition (or its equivalent conditions from case 2 of Corollary \ref{corr2}), which is strictly weaker than strong convexity.
\end{itemize}
Here, $L_j, j=1,\cdots, p$ and $L$ are positive constants. Let $\{x^{(t)}\}$ be generated by PALM and 
assume that $x^{(t)}\in \Omega, t\geq 0$.
Then, PALM converges linearly in the sense that
$$\varphi(x^{(t+1)})-\min \varphi\leq\left(\frac{\eta^2L_{\min}}{4pL^2+4L_{\max}^2}+1\right)^{-1}(\varphi(x^{(t)})-\min \varphi), ~~t\geq 0, $$
where $L_{\min}=\min_j L_j$ and $L_{\max}=\max_j L_j$.
\end{theorem}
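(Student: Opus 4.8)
The plan is to follow the classical two-ingredient recipe for linear convergence under an error bound: first a \emph{sufficient decrease} estimate expressing the per-iteration drop in $\varphi$ through the step length $\|x^{(t+1)}-x^{(t)}\|$, and then a \emph{subgradient bound} showing that some element of $\partial\varphi(x^{(t+1)})$ is controlled by the same step length; the \eqref{res-obj-EB} condition then closes the loop.

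For the sufficient decrease I would work block by block. Since the model minimized in the $j$-th subproblem is $L_j$-strongly convex (the quadratic term plus the convex $g_j$), comparing its value at the minimizer $x_j^{(t+1)}$ with its value at $x_j^{(t)}$ and invoking the descent lemma for the $L_j$-gradient-Lipschitz function $\psi_j^{(t)}$ yields $\varphi_j^{(t)}(x_j^{(t+1)}) \le \varphi_j^{(t)}(x_j^{(t)}) - \frac{L_j}{2}\|x_j^{(t+1)}-x_j^{(t)}\|^2$. Writing $z_j:=(x_{1:j}^{(t+1)},x_{(j+1):p}^{(t)})$ so that $z_0=x^{(t)}$ and $z_p=x^{(t+1)}$, these per-block inequalities telescope: summing over $j$ makes the $f$-terms collapse to $f(x^{(t+1)})-f(x^{(t)})$ and the $g_j$-terms to $g(x^{(t+1)})-g(x^{(t)})$, giving the global estimate $\varphi(x^{(t)})-\varphi(x^{(t+1)}) \ge \frac{L_{\min}}{2}\|x^{(t+1)}-x^{(t)}\|^2$.

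The heart of the argument is the subgradient bound, and I expect it to be the main obstacle. The optimality condition of the $j$-th subproblem produces $\xi_j\in\partial g_j(x_j^{(t+1)})$ with $\xi_j = -\nabla_j f(z_{j-1}) - L_j(x_j^{(t+1)}-x_j^{(t)})$, where $\nabla_j f(z_{j-1})=\nabla\psi_j^{(t)}(x_j^{(t)})$. Then $\zeta:=\nabla f(x^{(t+1)}) + (\xi_1,\dots,\xi_p)$ lies in $\partial\varphi(x^{(t+1)})$, and its $j$-th block is $\nabla_j f(x^{(t+1)})-\nabla_j f(z_{j-1}) - L_j(x_j^{(t+1)}-x_j^{(t)})$. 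The delicate point, and the source of the factor $pL^2$, is controlling the \emph{gradient staleness} $\nabla_j f(x^{(t+1)})-\nabla_j f(z_{j-1})$: here I would use that $f$ is \emph{globally} $L$-gradient-Lipschitz and that $z_{j-1}$ and $x^{(t+1)}$ differ only in blocks $j,\dots,p$, so that $\|x^{(t+1)}-z_{j-1}\|\le\|x^{(t+1)}-x^{(t)}\|$ for every $j$. Squaring, summing over $j$ with the elementary inequality $\|a+b\|^2\le 2\|a\|^2+2\|b\|^2$, and bounding $L_j\le L_{\max}$ yields $\|\partial^0\varphi(x^{(t+1)})\|^2 \le \|\zeta\|^2 \le 2(pL^2+L_{\max}^2)\|x^{(t+1)}-x^{(t)}\|^2$.

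Finally I would assemble the pieces. Applying the $(\partial^0\varphi,\eta,\Omega)$-\eqref{res-obj-EB} condition at the point $x^{(t+1)}\in\Omega$ gives $\eta^2(\varphi(x^{(t+1)})-\min\varphi)\le\|\partial^0\varphi(x^{(t+1)})\|^2$; chaining this with the subgradient bound and then with the sufficient decrease converts the step length into a multiple of the objective gap, producing $\varphi(x^{(t)})-\varphi(x^{(t+1)}) \ge \frac{L_{\min}\eta^2}{4(pL^2+L_{\max}^2)}\,(\varphi(x^{(t+1)})-\min\varphi)$. Rearranging this single scalar inequality into $a_{t+1}\le(1+c)^{-1}a_t$, with $a_t:=\varphi(x^{(t)})-\min\varphi$ and $c=\frac{\eta^2 L_{\min}}{4pL^2+4L_{\max}^2}$, is exactly the asserted rate. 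The standing invariance assumption $x^{(t)}\in\Omega$ is precisely what legitimizes invoking the error bound at every iterate.
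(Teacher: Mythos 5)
Your proposal is correct and follows essentially the same route as the paper's proof: per-block sufficient decrease telescoped to $\varphi(x^{(t)})-\varphi(x^{(t+1)})\geq \frac{L_{\min}}{2}\|x^{(t)}-x^{(t+1)}\|^2$, the subgradient $\nabla f(x^{(t+1)})+\xi^{(t+1)}\in\partial\varphi(x^{(t+1)})$ bounded by $(2pL^2+2L_{\max}^2)\|x^{(t+1)}-x^{(t)}\|^2$ via the block optimality conditions and the global Lipschitz gradient of $f$, and the \eqref{res-obj-EB} condition closing the loop into the stated rate. The details, including the $\|a+b\|^2\le 2\|a\|^2+2\|b\|^2$ split and the final rearrangement $a_{t+1}\le(1+c)^{-1}a_t$, coincide with the paper's three-step argument.
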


\begin{proof}
We divide the proof into three steps.

\textbf{Step 1.} We prove that
\begin{equation}\label{des1}
\varphi(x^{(t)})-\varphi(x^{(t+1)})\geq \frac{L_{\min}}{2}\|x^{(t)}-x^{(t+1)}\|^2, ~t\geq 0.
\end{equation}
Let $G^{(t)}_j=L_j(x_j^{(t)}-x_j^{(t+1)})$.
By the definition of $x_j^{(t+1)}$ and Lemma 2.3 in \cite{beck2009fast}, we get
 $$\varphi_j^{(t)}(x_j^{(t)})-\varphi_j^{(t)}(x_j^{(t+1)})\geq\frac{1}{2L_j}\|G^{(t)}_j\|^2
 =\frac{L_j^2}{2L_j}\|x_j^{(t)}-x_j^{(t+1)}\|^2= \frac{L_j}{2}\|x_j^{(t)}-x_j^{(t+1)}\|^2.$$
In addition, note that
 $$\sum^p_{j=1}\varphi_j^{(t)}(x_j^{(t)})=\sum^p_{j=1}\left(f(x^{(t+1)}_{1:(j-1)},x^{(t)}_{j:p})+g_j(x_j^{(t)})\right)$$
 and
  $$\sum^p_{j=1}\varphi_j^{(t)}(x_j^{(t+1)})=\sum^p_{j=1}\left(f(x^{(t+1)}_{1:j},x^{(t)}_{(j+1):p})+g_j(x_j^{(t+1)})\right).$$
Thus, we derive that for $t\geq 0$,
  $$\varphi(x^{(t)})-\varphi(x^{(t+1)})=\sum^p_{j=1}\varphi_j^{(t)}(x_j^{(t)})-\sum^p_{j=1}\varphi_j^{(t)}(x_j^{(t+1)})\geq \sum^p_{j=1}\frac{L_j}{2}\|x_j^{(t)}-x_j^{(t+1)}\|^2,$$
  from which \eqref{des1} follows.

  \textbf{Step 2.} The $(\partial^0 \varphi, \eta, \Omega)$-\eqref{res-obj-EB} condition at $x=x^{(t+1)}$ reads as
  $$\varphi(x^{(t+1)})-\min \varphi\leq \frac{\|\partial^0 \varphi(x^{(t+1)})\|^2}{\eta^2}.$$
  At the $(t+1)$-th iteration, there exists $\xi_j^{(t+1)}\in\partial g_j(x_j^{(t+1)})$ satisfying the optimality condition:
  $$\nabla_j f(x_{1:(j-1)}^{(t+1)},x_j^{(t)},x_{(j+1):p}^{(t)})+L_j(x_j^{(t+1)}-x_j^{(t)})+\xi_j^{(t+1)}=0.$$
 Here and below, we denote the partial gradient $\nabla_{x_j}f(x)$ by $\nabla_j f(x)$ for notational simplicity.  Let $\xi^{(t+1)}=(\xi_1^{(t+1) },\cdots, \xi_p^{(t+1)})$. Then, $$\nabla f(x^{(t+1)})+\xi^{(t+1)}\in \partial \varphi(x^{(t+1)})$$ and hence
    $$\varphi(x^{(t+1)})- \min\varphi\leq \frac{\|\partial^0 \varphi(x^{(t+1)})\|^2}{\eta^2}\leq \frac{\|\nabla f(x^{(t+1)})+\xi^{(t+1)}\|^2}{\eta^2}.$$
    Using the optimality condition and the fact of $f(x)\in \cF^{1,1}_L(\RR^d)$, we derive that
   \begin{align*}
\|\nabla f(x^{(t+1)})+\xi^{(t+1)}\|^2 = & \sum^p_{j=1}\| \nabla_jf(x^{(t+1)})-\nabla_j f(x_{1:(j-1)}^{(t+1)},x_j^{(t)},x_{(j+1):p}^{(t)})-L_j(x_j^{(t+1)}-x_j^{(t)})\|^2\\
\leq &\sum^p_{j=1}2\| \nabla_jf(x^{(t+1)})-\nabla_j f(x_{1:(j-1)}^{(t+1)},x_j^{(t)},x_{(j+1):p}^{(t)})\|^2+\sum^p_{j=1}2L_j^2\|x_j^{(t+1)}-x_j^{(t)}\|^2\\
\leq &\sum^p_{j=1}2\| \nabla f(x^{(t+1)})-\nabla  f(x_{1:(j-1)}^{(t+1)},x_j^{(t)},x_{(j+1):p}^{(t)})\|^2+ \sum^p_{j=1}2L_j^2\|x_j^{(t+1)}-x_j^{(t)}\|^2\\
\leq &\sum^p_{j=1}2L^2\| x_{j:p}^{(t+1)}-x_{j:p}^{(t)}\|^2+ \sum^p_{j=1}2L_j^2\|x_j^{(t+1)}-x_j^{(t)}\|^2\\
\leq &(2pL^2+2L_{\max}^2)\|x^{(t+1)}-x^{(t)}\|^2.
\end{align*}
Therefore, we obtain
\begin{equation}\label{des2}
\varphi(x^{(t+1)})-\min \varphi\leq \frac{(2pL^2+2L_{\max}^2)}{\eta^2}\|x^{(t+1)}-x^{(t)}\|^2.
\end{equation}

\textbf{Step 3.} Combining \eqref{des1} and \eqref{des2}, we derive that
   \begin{align*}
\varphi(x^{(t)})-\min \varphi= &\left( \varphi(x^{(t)})- \varphi(x^{(t+1)})\right)+ \left(\varphi(x^{(t+1)})-\min \varphi\right)\\
\geq &\frac{L_{\min}}{2}\|x^{(t)}-x^{(t+1)}\|^2 +\left(\varphi(x^{(t+1)})-\min \varphi\right)\\
\geq &\left(\frac{\eta^2L_{\min}}{4pL^2+4L_{\max}^2}+1\right) \left(\varphi(x^{(t+1)})-\min \varphi\right),
\end{align*}
from which the claimed result follows. This completes the proof.
\end{proof}

On one hand, the $(\varphi, \alpha, \Omega)$-\eqref{obj-EB} condition is obviously weaker than strong convexity.  On the other hand, we can easily construct functions that satisfy \eqref{obj-EB} but fail to be strongly convex. For example, the composition $f(Ax)$, where $f(\cdot)$ is strongly convex and $A$ is rank deficient, is such a function. This explains why we say that the $(\partial^0 \varphi, \eta, \Omega)$-\eqref{res-obj-EB} condition, which is equivalent to the $(\varphi, \alpha, \Omega)$-\eqref{obj-EB} condition, is strictly weaker than strong convexity.

We note that the authors of \cite{Banjac2016on} very recently showed that the regularized Jacobi algorithm-a type of cyclic block coordinate descent method-achieves a
linear convergence rate under similar conditions to that of Theorem \ref{PALMthm}.

\section{Linear convergence of Nesterov's accelerated forward-backward algorithm}\label{sec7}
This section is divided into two parts. In the first part, we first introduce a composite optimization problem, and then we give a new EB condition. In the second part, we introduce Nesterov's accelerated forward-backward algorithm and show its Q-linear convergence.
\subsection{Problem formulation and a new EB condition}
Given a nonnegative real sequence $\{r_k\}_{k\geq 0}$.  Following the terminology from \cite{Necedal1997numericall}, we say that $r_k$ converges:
\begin{itemize}
  \item Q-linearly if there exists a constant $\tau\in (0,1)$ such that $\forall k\geq 0$, $r_{k+1}\leq \tau\cdot r_k$,
  \item R-linearly if there exists a sequence $\{s_k\}_{k\geq 0}$ Q-linearly converging to zero such that $\forall k\geq 0$, $r_k\leq s_k$.
\end{itemize}
It is well-known that Nesterov's accelerated gradient method with the following form
  \begin{eqnarray}\label{acc1}
\left\{\begin{array}{lll}
y_k &= &x_k+\frac{\sqrt{L}-\sqrt{\mu}}{\sqrt{L}+\sqrt{\mu}}(x_k-x_{k-1}) \\\\
x_{k+1}&=&y_k-\frac{1}{L}\nabla f(y_k),
\end{array} \right.
\end{eqnarray}
converges R-linearly for minimizing $f\in\cS_{\mu,L}^{1,1}(\RR^n)$ in the sense that
$\{f(x_k)-\min f\}_{k\geq 0}$ converges R-linearly. Very recently, the following Q-linear convergence was independently discovered in \cite{Karimi2016A} and \cite{Wilson2016A} by quite different methods:
\begin{equation}\label{qlin1}
f(x_{k+1})-\min f +\frac{\mu}{2}\|w_{k+1}-x^*\|^2\leq\left(1-\sqrt{\frac{\mu}{L}}\right) \left(f(x_k)-\min f +\frac{\mu}{2}\|w_k-x^*\|^2\right), ~\forall k\geq 0,
\end{equation}
where $w_k=(1+\sqrt{\frac{L}{\mu}})y_k-\sqrt{\frac{L}{\mu}}x_k$. In Nesterov's book \cite{Rockafellar2004Variational}, via replacing gradient with gradient mapping, the accelerated scheme \eqref{acc1} was successfully extended to solve the following minimization problems:
\begin{equation}\label{p1}
\Min_{x\in Q} f(x),
\end{equation}
and
\begin{equation}\label{p2}
\Min_{x\in Q} f(x):=\max_{1\leq i\leq  m} f_i(x),
\end{equation}
where $f, f_i\in \cS_{\mu,L}^{1,1}(\RR^n), i=1, \cdots, m$ and $Q$ is a nonempty closed convex set. Similarly,  the accelerated scheme \eqref{acc1} can also be successfully extended to solve
\begin{equation}\label{p3}
\Min_{x\in \RR^n} \varphi(x):=f(x)+g(x),
\end{equation}
where $f \in\cS_{\mu,L}^{1,1}(\RR^n)$ and $g\in \Gamma_0(\RR^{n})$. Nesterov's extended accelerated methods have been proved to achieve R-linear convergence. A natural question arises: Whether there  exists $Q$-linear convergence for Nesterov's accelerated method applied to problems \eqref{p1}-\eqref{p3} as well. In order to study problems  \eqref{p1}-\eqref{p3} in a unified way, we consider the following composite optimization problem:
\begin{equation}\label{comp}
\Min_{x} \varphi(x):=f(e(x))+g(x).
\end{equation}
This is a very powerful expression covering many optimization problems, including problems \eqref{p1}-\eqref{p3}, as its special cases; see \cite{Drusvyatskiy2016Error,Drusvyatskiy2016An}. Now, we introduce a new EB condition, commonly satisfied by many concrete examples in the form of \eqref{p1}-\eqref{p3}; see Remark \ref{remark5} below. Our forthcoming argument will heavily rely on this condition.
\begin{definition}
Let $\varphi :=f\circ e +g $ be such that $f:\RR^m\rightarrow \RR$ is a closed convex function, $g\in \Gamma_0(\RR^{n})$, and $e: \RR^n\rightarrow \RR^m$ is a smooth mapping with its Jacobian given by $\nabla e(x)$. Let $L>0$ and define
$$\ell(x;y):=g(x)+f(e(y) +  \nabla e(y)(x-y)) +\frac{L}{2}\|x-y\|^2,$$ and
\begin{align}
\nonumber
&p(y):=\arg\min_{x\in\RR^n} \ell(x;y),\\
&G(y):=L(y-p(y)).\nonumber
\end{align}
We say that $\varphi$ satisfies the composite EB condition with positive constants $\mu, L$ obeying $\mu< L$ if
\begin{equation}\label{composition-EB}
 ~~\forall x, y\in \RR^n,~~\langle G(y), y-x\rangle \geq \varphi(p(y))-\varphi(x)+\frac{1}{2L}\|G(y)\|^2+\frac{\mu}{2}\|x-y\|^2.
\end{equation}
\end{definition}

Let us give several comments on this definition.
\begin{remark}\label{remark4}

\begin{enumerate}
  \item Both $p(y)$ and  $G(y)$  are well defined due to the strong convexity of $\ell(\cdot;y)$ for any $y\in\RR^n$. Moreover, the operator $G$ is a residual measure operator related to $\varphi$ and $\RR^n$. In fact,  observe that the optimality conditions for the proximal subproblem $\mathrm{Arg}\min_{x\in\RR^n} \ell(x;y)$ read as
      $$G(y)\in \partial g(p(y))+\nabla e(y)^T \partial f(e(y) +  \nabla e(y)(p(y)-y)),$$
      which implies $y\in \crit \varphi$ if $G(y)=0$. On the other hand, by the definition of $p(y)$ and using the convexity of $g$ and $f$, we derive that
      \begin{align}\label{ineqadd1}
\varphi (y)&= \ell(y;y)\geq \ell(p(y);y) \nonumber\\
&= g(p(y))+f(e(y) +  \nabla e(y)(p(y)-y)) +\frac{L}{2}\|p(y)-y\|^2 \nonumber \\
&\geq (g(y)+\langle z, p(y)-y\rangle) +(f(e(y))+\langle w, \nabla e(y)(p(y)-y)\rangle)+\frac{L}{2}\|p(y)-y\|^2 \nonumber \\
&= \varphi (y)+\langle z+\nabla e(y)^T w, p(y)-y\rangle +\frac{L}{2}\|p(y)-y\|^2,
\end{align}
 where $z\in\partial g(y)$ and $w\in\partial f(e(y))$, and hence $z+\nabla e(y)^T w \in \partial \varphi(y)$.  Thus, if $0\in \partial \varphi(y)$, then we can take some $z\in\partial g(y)$ and $w\in\partial f(e(y))$ such that $z+\nabla e(y)^T w =0$.
Hence,  the inequality \eqref{ineqadd1} implies that $G(y)=0$ if $y\in\crit \varphi$. Therefore, we have $\{x\in\RR^n: G(y)=0\}=\crit\varphi$, i.e., $G$ is a residual measure operator related to $\varphi$.

  \item The composite EB condition \eqref{composition-EB} can be viewed as a relaxation of strong convexity to some degree. This perspective is in the spirit of the work \cite{I2015Linear}. Indeed, in case of $m=1$, $g(x)\equiv0$, $f(t)\equiv t, t\in\RR$, and $e\in\cF^{1,1}_L(\RR^n)$, \eqref{composition-EB} reads as
      \begin{equation}\label{rel1}
       ~\forall x, y\in \RR^n,~~e(x)\geq \left( e(y-\frac{1}{L}\nabla e(y))+\frac{1}{2L}\|\nabla e(y)\|^2\right)+\langle \nabla e(y),x-y\rangle +\frac{\mu}{2}\|x-y\|^2.
      \end{equation}
      On the other hand, $e\in\cF^{1,1}_L(\RR^n)$ implies that $$~~\forall x, y\in \RR^n,~~e(y)\geq e(y-\frac{1}{L}\nabla e(y))+\frac{1}{2L}\|\nabla e(y)\|^2.$$ Therefore, \eqref{rel1} is a relaxation of strong convexity in the following form:
          \begin{equation*}\label{SC1}
      ~\forall x, y\in \RR^n,~~e(x)\geq e(y)+\langle \nabla e(y),x-y\rangle +\frac{\mu}{2}\|x-y\|^2.
      \end{equation*}
      In the case of $f\circ e(x)\equiv 0$ and $g\in \Gamma_0(\RR^n)$, \eqref{composition-EB} reads as
      \begin{equation}\label{rel2}
      ~\forall x, y\in \RR^n, ~~g(x)\geq g_\lambda(y)+\langle   \nabla g_\lambda(y),x-y\rangle +\frac{\mu}{2}\|x-y\|^2,
      \end{equation}
      where $\lambda=\frac{1}{L}$. Recall that $g_\lambda$ is the Moreau-Yosida regularization of $g$ and note that $g(x)\geq g_\lambda(x)$. We can see that \eqref{rel2} is a relaxation of strong convexity of $g_\lambda$.

  \item Although we have shown that \eqref{composition-EB} can be viewed as a relaxation of strong convexity, it is still a very strong property. Now, we construct an example to show that even strongly convex property of $f$ is not enough to ensure  \eqref{composition-EB} to hold. This example is obtained by setting $n=m=2$, $x=(x_1, x_2)^T$, $e(x)=(x_1, x_1)^T$, $f(x)=\frac{1}{2}x_1^2+\frac{1}{2}x_2^2,$  $g(x)\equiv 0$; then $\varphi(x)=f\circ e(x)=x_1^2$. It is obvious to see that $f$ is strongly convex. Let us show that in this special case  \eqref{composition-EB} fails to hold. Actually, after some simple calculations, we can get
      $$p(y)=\begin{pmatrix}\frac{L}{L+2}y_1 \\ y_2 \end{pmatrix},~~~~G(y)=\begin{pmatrix}\frac{2L}{L+2}y_1 \\ 0 \end{pmatrix},$$
      and therefore \eqref{composition-EB} reads as
    \begin{align*}
    \frac{2L}{L+2}y_1(y_1-x_1)\geq &(\frac{L}{L+2}y_1)^2-x_1^2+\frac{2L}{(L+2)^2}y_1^2\\
       &+\frac{\mu}{2}(x_1-y_1)^2+\frac{\mu}{2}(x_2-y_2)^2, ~~\forall x_i, y_i\in \RR, i=1,2.
       \end{align*}
But, if we take $x_1=y_1\equiv 0$, then we should have
$$0\geq \frac{\mu}{2}(x_2-y_2)^2, ~~\forall x_2, y_2\in \RR.$$
Obviously, this is impossible for any positive constant $\mu$.

\item Let $A\in\RR^{m\times n}$ with $m<n$ be a given matrix and $b\in\RR^m$ be a given vector.  A well-known fact in the community of EB is that the quadratic function $\frac{1}{2}\|Ax-b\|^2$ is not strongly convex but satisfies EB conditions. Unfortunately, this function fails to satisfy \eqref{rel1}. We show this point by contradiction. It is enough to consider the case of  $m=1$, $g(x)\equiv0$, $f(t)\equiv t, t\in\RR$, and $e(x)=\frac{1}{2}x^Taa^Tx$ with $\|a\|^2=L$. In this case, \eqref{rel1} reads as
    $$\frac{1}{2}(a^Tx-a^Ty)^2 \geq \frac{\mu}{2}\|x-y\|^2, ~\forall x, y\in \RR^n.$$
    Let $h\neq 0$ be an orthogonal vector of $a$. Now, take $y-x=\lambda h, \lambda\in\RR$. Then, we have
    $$0\geq \frac{\mu}{2}\lambda^2\|h\|^2, ~\forall\lambda \in\RR,$$
    which is impossible for any positive constant $\mu$.

\item In order to show that \eqref{rel2} can be \textsl{strictly} weaker than strong convexity, we now construct a one-dimensional example that satisfies \eqref{rel2} but fails to be strongly convex.
     Define the shrinkage operator by $\cS(t):=\mathrm{sign}(t)\cdot\max\{|t|-1,0\}$ and the projection operator by $[x]_I^+:=\arg\min_{y\in I}\|x-y\|$, where $I$ is some closed interval. Now, we take $\lambda=1$, $I=[-2,2]$, and $g(x)=|x|+\delta_I(x)$. Obviously, such $g(x)$ is convex but not strongly convex. Using formula (14) in \cite{zhang2017proj} and Lemma \ref{flamb}, we have
     $$g_\lambda(x)=|[\cS(x)]_I^+|+\frac{1}{2}(x-[\cS(x)]_I^+)^2.$$
     Here, $g_\lambda$ is the Moreau-Yosida regularization of $g$. Denote $\ell_{g_\lambda}(x;y):=g_\lambda(y)+\langle   \nabla g_\lambda(y),x-y\rangle$. We have the following expression:
     \begin{equation}
\ell_{g_\lambda}(x;y)=\left\{\begin{array}{ll}
(y+2)x-\frac{1}{2}y^2+4,~~& ~~y\leq -3,\\
-x-\frac{1}{2},     & ~~-3\leq y\leq -1,\\
yx-\frac{1}{2}y^2, & ~~-1\leq y\leq 1,\\
x-\frac{1}{2},& ~~1\leq y\leq 3,\\
(y-2)x-\frac{1}{2}y^2+4,& ~~y\geq  3
\end{array}\right.
\end{equation}
Then, one can verify case by case that for any $\mu\in (0,\frac{1}{9}]$, \eqref{rel2} always holds. For example, in the case of $y\leq -3$, we only need to verify that $$|x|\geq (y+2)x-\frac{1}{2}y^2+4+\frac{\mu}{2}(x-y)^2, ~~x\in [-2,2],$$
i.e., $\frac{1-\mu}{2}(x-y)^2 \geq \frac{1}{2}x^2+2x-|x|+4, ~~x\in [-2,2].$
Thus, it is sufficient to require that
$$\mu\leq 1-\max_{y\leq -3, |x|\leq 2}\frac{x^2+4x-2|x|+8}{(x-y)^2}.$$
After some simple calculations, we have $\mu\leq \frac{1}{9}$. The other cases can be similarly verified; we omit the details here. This example shows that the composite EB condition \eqref{composition-EB} indeed holds for some non-strongly convex functions.
\end{enumerate}

\end{remark}

Now, we explain why we say that the condition \eqref{composition-EB} is commonly satisfied by problems \eqref{p1}-\eqref{p3}, whose objective functions are clearly not in  $\cS^{1,1}_{\mu,L}(\RR^n)$.
\begin{remark}\label{remark5}
\begin{itemize}
  \item[(i)]  The minimization problem \eqref{comp} with $m=1$, $e(x)\in \cS^{1,1}_{\mu,L}(\RR^n)$, $f(t)\equiv t, t\in\RR$, $g(x)=\delta_Q(x)$, and $Q$ being nonempty closed convex, corresponds to problem \eqref{p1}.  The condition \eqref{composition-EB}  holds in this setting; see Theorem 2.2.7 in \cite{nesterov2004introductory}.
  \item[(ii)]  The minimization problem \eqref{comp} with  $f(y)=\max_{1\leq i\leq m}\{y_i\}$, $f_i(x)\in\cS^{1,1}_{\mu,L}(\RR^n)$, $e(x)=(f_1(x), f_2(x),\cdots, f_m(x))$, $g(x)=\delta_Q(x)$,  and $Q$ being nonempty closed convex, corresponds to problem \eqref{p2}. The condition \eqref{composition-EB}  holds in this setting; see Corollary 2.3.2 in \cite{nesterov2004introductory}.
  \item[(iii)]  The minimization problem \eqref{comp} with  $m=1$, $e(x)\in \cS^{1,1}_{\mu,L}(\RR^n)$, $f(t)\equiv t, t\in\RR$, and $g(x)\in \Gamma_0(\RR^n)$, corresponds to problem \eqref{p3}.  The condition \eqref{composition-EB}  holds in this setting; see the inequality (4.36) in \cite{Chamb2016an}.
\end{itemize}
\end{remark}
Interestingly, we note that while this work was under review, the authors of \cite{Ma2017under} utilized the exact form \eqref{composition-EB} to construct underestimate sequences and proposed several first order methods for minimizing strongly convex smooth functions and for strongly convex composite functions. Based on the discussion in this section, it could be expected to extend the corresponding results in \cite{Ma2017under} to the composite optimization problem \eqref{comp}.

In general, we have to admit that it is difficult to verify the composite EB condition \eqref{composition-EB}, which therefore deserves further study in the future.

\subsection{Q-linear convergence of Nesterov's acceleration}
In this part, we show Q-linear convergence of Nesterov's acceleration under the composite EB condition  \eqref{composition-EB}, which is more general than strong convexity.
First, in light of Nesterov's accelerated scheme (2.2.11) in \cite{nesterov2004introductory}, Nesterov's accelerated forward-backward algorithm for solving the problem \eqref{comp} reads as: choosing $x_{-1}=x_0\in \RR^n$,  for $k\geq 0$,
  \begin{eqnarray*}
\left\{\begin{array}{lll}
y_k &= &x_k+\frac{\sqrt{L}-\sqrt{\mu}}{\sqrt{L}+\sqrt{\mu}}(x_k-x_{k-1}) \\\\
x_{k+1}&=&y_k-\frac{1}{L}G(y_k).
\end{array} \right.
\end{eqnarray*}
Let
$$\alpha=\frac{\sqrt{L}-\sqrt{\mu}}{\sqrt{L}+\sqrt{\mu}},~\beta=\frac{2\sqrt{\mu}}{\sqrt{L}+\sqrt{\mu}},
~\gamma=\frac{1}{2L}(1+\sqrt{\frac{L}{\mu}}).$$
Let
$$\Phi_k(x^*;\tau):=\varphi(x_k)- \min\varphi+\tau\cdot\|z_k-x^*\|^2,~k\geq 0,$$
where $x^*\in\mathrm{Arg}\min\varphi$ (assumed to be nonempty)  and
 $$z_k=\frac{1}{2}(1+\sqrt{\frac{L}{\mu}})y_k+\frac{1}{2}(1-\sqrt{\frac{L}{\mu}})x_k, ~k\geq 0.$$
 Now, we are ready to present the main result in this section. The proof idea behind is partially inspired by the argument in \cite{Attouch2015the} but might be of interest in its own right.
\begin{theorem}
Let $\varphi :=f\circ e +g $ be such that $f:\RR^m\rightarrow \RR$ is a closed convex function, $g\in \Gamma_0(\RR^{n})$, and $e: \RR^n\rightarrow \RR^m$ is a smooth mapping with its Jacobian given by $\nabla e(x)$. Let $\varphi$ satisfy  the composite EB condition \eqref{composition-EB} with positive constants $\mu, L$ obeying $\mu< L$.  Assume that $\varphi$ achieves its minimum $\min\varphi$ so that $\mathrm{Arg}\min\varphi\neq \emptyset$.  Then,  there exist a unique vector $x^*$ such that $\mathrm{Arg}\min \varphi=\{x^*\}$, and Nesterov's accelerated forward-backward method converges Q-linearly in the sense that there exists a positive constant $\theta_0<1$ such that for any $\theta\in[\theta_0, 1)$ it holds
\begin{equation}\label{qlin2}
\Phi_{k+1}(x^*;\tau)\leq \rho \cdot \Phi_k(x^*;\tau), ~k\geq 0,
\end{equation}
where $\rho=\max\{\alpha, \theta\}<1$ and $\tau=\frac{\theta\beta}{2\rho\gamma}$. Especially, by taking $\theta= \max\{\theta_0, \alpha\}$, we have
\begin{equation}\label{qlin3}
\Phi_{k+1}(x^*;\frac{2L\mu}{(\sqrt{L}+\sqrt{\mu})^2})\leq \max\{\theta_0, \alpha\} \cdot \Phi_k(x^*;\frac{2L\mu}{(\sqrt{L}+\sqrt{\mu})^2}), ~k\geq 0.
\end{equation}
\end{theorem}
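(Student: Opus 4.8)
The plan is to treat $\Phi_k(x^*;\tau)$ as a Lyapunov function and prove the one-step contraction \eqref{qlin2} directly from the composite EB condition \eqref{composition-EB}, using two carefully chosen test points. Throughout I would write $c:=\sqrt{L/\mu}>1$, so that $\alpha=\frac{c-1}{c+1}$, $\beta=\frac{2}{c+1}$, $\gamma=\frac{c+1}{2L}$, and note that the update gives $x_{k+1}=p(y_k)$ and hence $G(y_k)=L(y_k-x_{k+1})$. First I would record two structural facts. Setting $y=x^*$ in \eqref{composition-EB} (where $G(x^*)=0$ and $p(x^*)=x^*$, since $x^*$ is critical and $G$ is a residual measure operator by Remark \ref{remark4}) yields $\varphi(x)\ge\min\varphi+\frac{\mu}{2}\|x-x^*\|^2$ for all $x$, i.e. a quadratic growth inequality; this immediately forces $\mathrm{Arg}\min\varphi$ to be the singleton $\{x^*\}$.

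The second, decisive structural fact is a clean recursion for the auxiliary sequence. From $z_k=\frac12(1+c)y_k+\frac12(1-c)x_k$ and $y_{k+1}=x_{k+1}+\alpha(x_{k+1}-x_k)$ a short computation gives both the decomposition $z_k-x^*=(y_k-x^*)+\frac{c-1}{2}(y_k-x_k)$ and the key identity $z_{k+1}=z_k-\gamma\, G(y_k)$. Expanding the square then produces
\[
\|z_{k+1}-x^*\|^2=\|z_k-x^*\|^2-2\gamma\langle G(y_k),z_k-x^*\rangle+\gamma^2\|G(y_k)\|^2 .
\]
I would next apply \eqref{composition-EB} at the two test points $x=x^*$ and $x=x_k$ (both with $y=y_k$), obtaining lower bounds on $\langle G(y_k),y_k-x^*\rangle$ and $\langle G(y_k),y_k-x_k\rangle$; substituting these into the decomposition of $\langle G(y_k),z_k-x^*\rangle$ and writing $\delta_k:=\varphi(x_k)-\min\varphi$, the $\|G(y_k)\|^2$-coefficient in the resulting lower bound is exactly $\frac{c+1}{2}\cdot\frac1{2L}=\frac{\gamma}{2}$.

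The crux is then a cancellation: this is precisely the coefficient that makes the term $-2\gamma\cdot\frac{\gamma}{2}\|G(y_k)\|^2$ annihilate the $+\gamma^2\|G(y_k)\|^2$ above, so the residual-gradient term disappears entirely. After cancelling I obtain
\[
\|z_{k+1}-x^*\|^2\le\|z_k-x^*\|^2-\gamma(c+1)\delta_{k+1}+\gamma(c-1)\delta_k-\gamma\mu\|y_k-x^*\|^2-\tfrac{\gamma(c-1)\mu}{2}\|x_k-y_k\|^2 .
\]
Multiplying by $\tau$ and adding $\delta_{k+1}$, and using $\tau\gamma(c+1)=\theta/\rho$ and $\tau\gamma(c-1)=\theta\alpha/\rho$ (which follow from $\beta(c+1)=2$, $\beta(c-1)=2\alpha$, and $\tau=\frac{\theta\beta}{2\rho\gamma}$), the coefficient of $\delta_{k+1}$ becomes $1-\theta/\rho$ and that of $\delta_k$ becomes $\theta\alpha/\rho$. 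Choosing $\theta_0\ge\alpha$ puts us in the regime $\rho=\theta$, where the $\delta_{k+1}$ term vanishes identically and the $\delta_k$ coefficient reduces to $\alpha\le\theta=\rho$, as required.

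The remaining and main obstacle is to contract the distance term: one still needs
\[
(1-\theta)\|z_k-x^*\|^2\le\gamma\mu\|y_k-x^*\|^2+\tfrac{\gamma(c-1)\mu}{2}\|x_k-y_k\|^2 .
\]
Here I would feed the decomposition $z_k-x^*=(y_k-x^*)+\frac{c-1}{2}(y_k-x_k)$ into Young's inequality $\|a+b\|^2\le(1+s)\|a\|^2+(1+s^{-1})\|b\|^2$ and match the two resulting coefficients against $\gamma\mu$ and $\frac{\gamma(c-1)\mu}{2}$; since the left-hand side carries the factor $(1-\theta)$ while the right-hand side is bounded below by a fixed positive constant depending only on $c$, both inequalities hold once $\theta$ is close enough to $1$. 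This is exactly what pins down the threshold $\theta_0<1$ (taken, in addition, to be at least $\alpha$). Assembling the pieces gives $\Phi_{k+1}\le\rho\,\Phi_k$, and the special case \eqref{qlin3} follows by setting $\theta=\max\{\theta_0,\alpha\}$, so that $\rho=\theta$ and $\tau=\frac{\beta}{2\gamma}=\frac{2L\mu}{(\sqrt L+\sqrt\mu)^2}$. I expect the Young-inequality balancing—verifying that a single $s>0$ makes both coefficient conditions compatible for some $\theta<1$—to be the one genuinely delicate computation; everything else is bookkeeping once the identity $z_{k+1}=z_k-\gamma\,G(y_k)$ and the $\|G(y_k)\|^2$ cancellation are in hand.
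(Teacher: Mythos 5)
Your proposal is correct and follows essentially the same route as the paper's proof: the same two applications of \eqref{composition-EB} at $x=x_k$ and $x=x^*$ with $y=y_k$ combined with weights $\alpha,\beta$, the same identity $z_{k+1}-x^*=z_k-x^*-\gamma G(y_k)$ producing the exact cancellation of the $\|G(y_k)\|^2$ terms via $\frac{\beta\gamma}{2}=\frac{1}{2L}$, and the same final comparison of $\|z_k-x^*\|^2$ against $\|y_k-x^*\|^2+\|y_k-x_k\|^2$ (the paper uses the crude bound with constant $\max\{2,\tfrac12(\sqrt{L/\mu}-1)^2\}$ where you use Young's inequality with a free parameter, and it keeps the case $\rho=\alpha>\theta$ alive via $\theta/\rho\le 1$ where you simply force $\theta_0\ge\alpha$ — both choices are harmless for the statement as given).
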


\begin{proof}
We first show the uniqueness of optimal solution $x^*$ of $\varphi$. In fact, by statement (i) in Remark \ref{remark4} and the fact of $\mathrm{Arg}\min\varphi \subset \crit \varphi$, we have that $G(x^*)=0$ and $p(x^*)=x^*$, and hence \eqref{composition-EB} at $y=x^*$ reads as
$$\varphi(x)-\min \varphi\geq \frac{\mu}{2}\|x-x^*\|^2, ~~\forall x\in \RR^n,$$
which clearly implies that $\mathrm{Arg}\min \varphi=\{x^*\}$.

Now, we analyze rates of linear convergence. Using successively \eqref{composition-EB} at $x=x_k$ and $y=y_k$, and then at $y=y_k$ and $x=x^*$, together with the fact of $x_{k+1}=p(y_k)$, we obtain
\begin{equation*}
\varphi(x_{k+1})\leq \varphi(x_k)+\langle G(y_k),y_k-x_k\rangle -\frac{1}{2L}\|G(y_k)\|^2-\frac{\mu}{2}\|x_k-y_k\|^2
\end{equation*}
and
\begin{equation*}
\varphi(x_{k+1})\leq \varphi(x^*)+\langle G(y_k),y_k-x^*\rangle -\frac{1}{2L}\|G(y_k)\|^2-\frac{\mu}{2}\|x^*-y_k\|^2.
\end{equation*}
Multiplying the first inequality by $\alpha$ and the second one by $\beta$, and then adding the two resulting inequalities, we obtain
\begin{align*}
\varphi(x_{k+1})\leq &\alpha \varphi(x_k)+\beta\varphi(x^*) + \langle G(y_k),\alpha (y_k-x_k)+\beta(y_k-x^*)\rangle\\ &-\frac{1}{2L}\|G(y_k)\|^2-\frac{\mu\alpha }{2}\|x_k-y_k\|^2-\frac{\mu\beta}{2}\|x^*-y_k\|^2.
\end{align*}
In order to estimate the right-hand side of the inequality above, we first write down:
\begin{equation}\label{eq1}
\alpha (y_k-x_k)+\beta(y_k-x^*)=\beta(z_k-x^*).
\end{equation}
Secondly, using the expression of $y_{k+1}=x_{k+1}+\alpha(x_{k+1}-x_k)$, we get
\begin{equation}\label{zk}
z_{k+1}=\frac{1}{2}(1+\sqrt{\frac{L}{\mu}})x_{k+1}+\frac{1}{2}(1-\sqrt{\frac{L}{\mu}})x_k.
\end{equation}
Then, substitute $x_{k+1}=y_k-\frac{1}{L}G(y_k)$  into formula \eqref{zk} to obtain
\begin{equation}\label{eq01}
z_{k+1}-x^*=z_k-x^*-\gamma\cdot G(y_k).
\end{equation}
Using equality \eqref{eq01}, we derive that
\begin{align*}
\langle G(y_k),z_k-x^*\rangle &= \frac{1}{\gamma}\langle z_k-x^*-(z_{k+1}-x^*),z_k-x^*\rangle  \\
&= \frac{1}{\gamma}\|z_k-x^*\|^2 -\frac{1}{\gamma}\langle  z_{k+1}-x^*,z_k-x^*\rangle  \\
&= \frac{1}{\gamma}\|z_k-x^*\|^2 -\frac{1}{\gamma}\langle  z_{k+1}-x^*,z_{k+1}-x^*+\gamma\cdot G(y_k)\rangle\\
&= \frac{1}{\gamma}\|z_k-x^*\|^2 -\frac{1}{\gamma}\|z_{k+1}-x^*\|^2- \langle  z_{k+1}-x^*,  G(y_k)\rangle\\
&= \frac{1}{\gamma}\|z_k-x^*\|^2 -\frac{1}{\gamma}\|z_{k+1}-x^*\|^2-\langle G(y_k),z_k-x^*\rangle +\gamma\|G(y_k)\|^2.
\end{align*}
Thus, we have
\begin{equation}\label{gk}
\langle G(y_k),z_k-x^*\rangle=\frac{1}{2\gamma}(\|z_k-x^*\|^2-\|z_{k+1}-x^*\|^2)+  \frac{\gamma}{2}\|G(y_k)\|^2.
\end{equation}
Combining formula \eqref{gk} and  formula \eqref{eq1}, we derive that
\begin{align*}
\varphi(x_{k+1})\leq & \alpha \varphi(x_k)+\beta\varphi(x^*) +\frac{\beta}{2\gamma}(\|z_k-x^*\|^2-\|z_{k+1}-x^*\|^2) \\ &+(\frac{\beta\gamma}{2}-\frac{1}{2L})\|G(y_k)\|^2-\frac{\mu\alpha }{2}\|x_k-y_k\|^2-\frac{\mu\beta}{2}\|x^*-y_k\|^2\\
=&\alpha \varphi(x_k)+\beta\varphi(x^*) +\frac{\beta}{2\gamma}(\|z_k-x^*\|^2-\|z_{k+1}-x^*\|^2)-\frac{\mu\alpha }{2}\|x_k-y_k\|^2-\frac{\mu\beta}{2}\|x^*-y_k\|^2,
\end{align*}
where the term $\|G(y_k)\|^2$ is eliminated since $\frac{\beta\gamma}{2}=\frac{1}{2L}$. Note that \eqref{eq1} can be written as
$$z_k-x^*=(y_k-x^*)+\frac{1}{2}(\sqrt{\frac{L}{\mu}}-1)(y_k-x_k),$$
with which we further derive that
\begin{align*}
\|z_k-x^*\|^2\leq &  2\|x^*-y_k\|^2+ \frac{1}{2}(\sqrt{\frac{L}{\mu}}-1)^2\|y_k-x_k\|^2\\
&\leq \max\left\{2, \frac{1}{2}(\sqrt{\frac{L}{\mu}}-1)^2\right\}(\|x^*-y_k\|^2+\|y_k-x_k\|^2).
\end{align*}
Denote $\eta_1:=\min\left\{\frac{\mu\alpha}{2},\frac{\mu\beta}{2}\right\}$ and $\eta_2:=\max\left\{2, \frac{1}{2}(\sqrt{\frac{L}{\mu}}-1)^2\right\}$. Then, we have
\begin{align*}
\varphi(x_{k+1})&\leq  \alpha \varphi(x_k)+\beta\varphi(x^*) +\frac{\beta}{2\gamma}(\|z_k-x^*\|^2-\|z_{k+1}-x^*\|^2) -\eta_1(\|x^*-y_k\|^2+\|y_k-x_k\|^2)\\
&\leq  \alpha \varphi(x_k)+\beta\varphi(x^*) +\frac{\beta}{2\gamma}(\|z_k-x^*\|^2-\|z_{k+1}-x^*\|^2) -\frac{\eta_1}{\eta_2}\|z_k-x^*\|^2.
\end{align*}
Rearrange the terms to obtain
$$\varphi(x_{k+1})-\varphi(x^*)+\frac{\beta}{2\gamma}\|z_{k+1}-x^*\|^2\leq \alpha( \varphi(x_k)-\varphi(x^*))+(\frac{\beta}{2\gamma}-\frac{\eta_1}{\eta_2})\|z_k-x^*\|^2.$$
Thus, there exists a positive constant $\theta_0<1$ such that for any $\theta\in [\theta_0, 1)$ it holds
$$\varphi(x_{k+1})-\varphi(x^*)+\frac{\beta}{2\gamma}\|z_{k+1}-x^*\|^2\leq \alpha( \varphi(x_k)-\varphi(x^*))+\frac{\theta\beta}{2\gamma}\|z_k-x^*\|^2.$$
Since $\rho=\max\{\alpha, \theta\}$, we have that $\rho<1$ and $\frac{\theta}{\rho}\leq 1$. Thus, we obtain
\begin{align*}
\varphi(x_{k+1})-\varphi(x^*)+\frac{\theta\beta}{2\rho\gamma}\|z_{k+1}-x^*\|^2& \leq \alpha( \varphi(x_k)-\varphi(x^*))+\frac{\theta\beta}{2\gamma}\|z_k-x^*\|^2\\
& \leq \rho\left(\varphi(x_k)-\varphi(x^*))+\frac{\theta\beta}{2\rho\gamma}\|z_k-x^*\|^2\right),
\end{align*}
i.e., $\Phi_{k+1}(x^*;\tau)\leq \rho \cdot \Phi_k(x^*;\tau)$ with $\tau=\frac{\theta\beta}{2\rho\gamma}$. This is just the announced result \eqref{qlin2}.

It remains to show \eqref{qlin3}. In fact, if $\theta= \max\{\theta_0, \alpha\}$, then $\rho=\max\{\alpha, \theta\}=\max\{\theta_0, \alpha\}=\theta$ and hence
$$\tau=\frac{\theta\beta}{2\rho\gamma}=\frac{\beta}{2\gamma}=\frac{2L \mu}{(\sqrt{L}+\sqrt{\mu})^2}.$$
This completes the proof.
\end{proof}

\begin{remark}
It should be noted that we here only show the existence of rates of linear convergence for Nesterov's accelerated forward-backward method. But, it is not clear whether one can derive an exact rate of linear convergence as $1-\sqrt{\frac{\mu}{L}}$ as obtained for Nesterov's accelerated gradient method.
\end{remark}

\section{A class of dual functions satisfying EB conditions}\label{sec8}

Verifying EB conditions for functions with certain structure is a difficult topic. In this section, we consider a class of dual objective functions, that have interesting applications in signal processing and compressive sensing \cite{Zhang2015A,Lai2012Augmented}. We first describe the problem, along with some direct results.
\begin{proposition}\label{prop1}
Consider the linearly constrained optimization problem
\begin{equation}\label{primal}
\Min_{y\in \RR^m} g(y),~~\textrm{subject to} ~~ Ay=b,  \tag{P}
\end{equation}
where  $g: \RR^m \rightarrow \RR$ is a real-valued and strongly convex function with modulus $c>0$, $A\in\RR^{n\times m}$ is a given matrix with $m\leq n$, and  $b\in R(A)$ is a given vector. Here, $R(A)$ stands for the range of $A$. The dual problem is
\begin{equation}\label{dual}
\Min_{x\in \RR^n} f(x):= g^*(A^Tx)-\langle b,  x\rangle.  \tag{D}
\end{equation}
Then, we have that
\begin{itemize}
  \item the primal problem \eqref{primal} has a unique optimal solution $\bar{y}$,
  \item the dual objective function $f$ belongs to $\cF^{1,1}_{L}(\RR^n)$ with $L=\frac{\|A\|^2}{c}$, and
  \item the set of optimal solutions of the dual problem, $$\mathrm{Arg}\min f:=\{x\in\RR^n:  A\nabla g^*(A^Tx)=b\},$$  is a nonempty closed convex set, and can be characterized by $\{x\in\RR^n:   A^Tx\in \partial g(\bar{y})\}$ or equivalently by $\{x\in\RR^n:  \nabla g^*(A^Tx)=\bar{y}\}$ .
%  \item $\mathrm{Arg}\min f=X_0$ and $V_0=A^T X_0\subseteq \partial g(\bar{y})$.
\end{itemize}
\end{proposition}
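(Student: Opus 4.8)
The plan is to dispatch the three bullets in turn, using throughout the standard duality that ties strong convexity of $g$ to smoothness of $g^*$. For the first bullet, I would observe that strong convexity with modulus $c>0$ makes $g$ simultaneously strictly convex and coercive: for any $y_0$ and any $s\in\partial g(y_0)$ (nonempty since $g$ is real-valued convex) one has $g(y)\ge g(y_0)+\langle s,y-y_0\rangle+\frac{c}{2}\|y-y_0\|^2$, whose right-hand side tends to $+\infty$ as $\|y\|\to\infty$. Since $b\in R(A)$, the feasible set $\{y:Ay=b\}$ is a nonempty closed affine subspace; minimizing a coercive lower semicontinuous function over it produces a minimizer by Weierstrass, and strict convexity forces it to be unique. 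Call it $\bar y$. Securing this uniqueness cleanly here is what makes the third bullet routine.

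For the second bullet, the fact I would invoke is the classical equivalence that $g$ is $c$-strongly convex iff $g^*$ is everywhere differentiable with $\frac{1}{c}$-Lipschitz gradient. Granting this, $f(x)=g^*(A^Tx)-\langle b,x\rangle$ is convex (conjugates are convex, and composition with the linear map $A^T$ together with subtraction of an affine term preserves convexity) and differentiable with $\nabla f(x)=A\nabla g^*(A^Tx)-b$. The Lipschitz bound is then a short chain, $\|\nabla f(x)-\nabla f(x')\|=\|A(\nabla g^*(A^Tx)-\nabla g^*(A^Tx'))\|\le\|A\|\cdot\frac{1}{c}\|A^Tx-A^Tx'\|\le\frac{\|A\|^2}{c}\|x-x'\|$, giving $L=\|A\|^2/c$ and hence $f\in\cF^{1,1}_L(\RR^n)$.

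For the third bullet, since $f$ is convex and differentiable we immediately get $\mathrm{Arg}\min f=\{x:\nabla f(x)=0\}=\{x:A\nabla g^*(A^Tx)=b\}$, a set that is convex (argmin of a convex function) and closed (continuity of $\nabla g^*$); what remains is nonemptiness and the two alternative descriptions. I would extract a multiplier from the primal optimality condition $0\in\partial(g+\delta_{\{Ay=b\}})(\bar y)=\partial g(\bar y)+R(A^T)$, using that the normal cone to the affine feasible set equals $R(A^T)$ and that the Moreau–Rockafellar sum rule applies because $g$ is finite-valued. This yields $\bar x$ with $A^T\bar x\in\partial g(\bar y)$. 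By the conjugate subgradient correspondence $v\in\partial g(u)\iff u=\nabla g^*(v)$ (single-valuedness of $\partial g^*=\{\nabla g^*\}$ coming precisely from strong convexity of $g$), this is equivalent to $\nabla g^*(A^T\bar x)=\bar y$, so $A\nabla g^*(A^T\bar x)=A\bar y=b$ and $\bar x\in\mathrm{Arg}\min f$. Running the same correspondence for an arbitrary dual optimum $x$ shows $y_x:=\nabla g^*(A^Tx)$ satisfies $Ay_x=b$ and $A^Tx\in\partial g(y_x)$, i.e. $y_x$ meets the primal optimality conditions, so $y_x=\bar y$ by uniqueness; this simultaneously delivers $\mathrm{Arg}\min f=\{x:A^Tx\in\partial g(\bar y)\}=\{x:\nabla g^*(A^Tx)=\bar y\}$.

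The main obstacle I anticipate is not any single deep step but the bookkeeping in the third bullet: because $g$ is only assumed strongly convex (not differentiable), every primal-side optimality statement must be phrased through $\partial g$ and transported to the dual side via the conjugate correspondence and the differentiability of $g^*$. The uniqueness of $\bar y$ is the hinge that identifies the three characterizations with one another, so the care invested in the first bullet is exactly what renders this final step mechanical.
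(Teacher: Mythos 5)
Your proposal is correct, and for the third bullet it takes a recognizably different (more self-contained) route than the paper. The paper dismisses the first two bullets as textbook facts and, for the third, invokes a black-box strong-duality result (Proposition 5.3.3 in Bertsekas) to assert that any dual optimum $\bar{x}$ makes $\bar{y}$ minimize the Lagrangian $L(y,\bar{x})=g(y)-\langle Ay-b,x\rangle$, whence $A^T\bar{x}\in\partial g(\bar{y})$ via $(\partial g)^{-1}=\nabla g^*$; nonemptiness of $\mathrm{Arg}\min f$ is absorbed into that citation. You instead argue primal-to-dual: you extract a multiplier directly from Fermat's rule $0\in\partial g(\bar{y})+N_{\{Ay=b\}}(\bar{y})=\partial g(\bar{y})+R(A^T)$ (the Moreau--Rockafellar sum rule applying because $g$ is finite-valued), which \emph{constructs} a point of $\mathrm{Arg}\min f$ and so proves nonemptiness without appeal to a duality-attainment theorem; you then run the conjugate correspondence in the other direction to identify every dual optimum's image $\nabla g^*(A^Tx)$ with $\bar{y}$ via uniqueness. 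Both arguments hinge on the same two facts --- uniqueness of $\bar{y}$ and $(\partial g)^{-1}=\nabla g^*$ --- so the mathematical content is the same; what your version buys is an elementary, citation-free treatment of existence of dual solutions and explicit proofs of the first two bullets, at the cost of a little extra bookkeeping with normal cones that the Lagrangian formulation hides.
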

\begin{proof}
The first two statements are standard results which can be found in textbooks on convex analysis and no proof will be given here. Now, we prove the third statement. First, let the Lagrangian function be given by
$L(y, x)=g(y)-\langle Ay-b, x\rangle.$
 By the assumption of $b\in R(A)$ and the finiteness of the optimal value of primal problem, according to Proposition 5.3.3 in \cite{bertsekas2011convex},  for any $\bar{x}\in \mathrm{Arg}\min f$ we have that
 $\bar{y}\in \mathrm{Arg}\min L(y, \bar{x}).$ Hence,  $A^T\bar{x}\in \partial g(\bar{y})$ or equivalently $\nabla g^*(A^T\bar{x})=\bar{y}$ due to $(\partial g)^{-1}=\nabla g^*$, which holds by Corollary 23.5.1 in \cite{rockafellar1970convex}. This implies that $\mathrm{Arg}\min f\subseteq  \{x\in\RR^n:  \nabla g^*(A^Tx)=\bar{y}\}$. The inverse inclusion is obvious since $A\bar{y}=b$. Thereby, $$\mathrm{Arg}\min f=\{x\in\RR^n:  \nabla g^*(A^Tx)=\bar{y}\}=\{x\in\RR^n:   A^Tx\in \partial g(\bar{y})\}.$$
 This completes the proof.
\end{proof}

Now, we state the main result of this section.
\begin{theorem}\label{dualmain}
Use the same setting as Proposition \ref{prop1}. Denote  $X_r:=\{x\in\RR^n: f(x)\leq \min f +r\}$ with $r\geq0$  and $V_r:=\cl(A^TX_r)$, where $\cl(A^TX_r)$ stands for the closure of $A^TX_r$. If the following assumptions hold:
 \begin{itemize}
   \item[(a)]  $\partial g$ is calm around $\bar{y}$ for any $\bar{z}\in V_0$,
   \item[(b)]  the collection $\{\partial g(\bar{y}), R(A^T)\}$ is linearly regular with constant $\gamma>0$, that is
   $$d(A^Tx, \partial g(\bar{y}))\geq \gamma \cdot d(A^Tx, \partial g(\bar{y})\cap R(A^T)),~~\forall x\in \RR^n,$$
 \end{itemize}
 then we have that
 \begin{enumerate}
   \item[(i)]  There exist positive constants $r_0, \tau$ such that the $(f, \tau, X_{r_0})$-\eqref{obj-EB} condition holds, that is
\begin{equation}\label{mainqg}
f(x)-\min f \geq  \frac{\tau}{2}\cdot d^2(x, \crit f), ~~\forall x\in X_{r_0}.
\end{equation}
 Specifically, if $\partial g$ is calm with constant $\kappa>0$ around $\bar{y}$ for any $\bar{z}\in V_0$, then \eqref{mainqg} holds for all $\tau\in(0, \kappa^{-1})$.
   \item[(ii)]   For any sublevel set $X_r$, pick $r_1\in (0, r_0)$ and let $c_r:=\sqrt{\frac{r_1}{r}}$ and
     \begin{eqnarray*}
\rho_r:=
\left\{\begin{array}{lll}
c_r,       &\textrm{when} ~~r\geq r_0, \\
1, &\textrm{when}~~r\leq r_0.
\end{array} \right.
\end{eqnarray*}
Then, the $(\nabla f, \nu, X_{r})$-\eqref{cor-EB} condition with $\nu=\frac{\tau \rho_r^2}{8}$ holds.
 \end{enumerate}
\end{theorem}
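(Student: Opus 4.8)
The plan is to first establish the objective-value bound of part (i) and then bootstrap it to the correlated-value bound of part (ii) on arbitrary sublevel sets. The starting point, valid globally and needing neither assumption (a) nor (b), is an exact reformulation of the dual gap as a Bregman distance. Writing $y(x):=\nabla g^*(A^Tx)$ and using $A\bar y=b$ together with $\nabla g^*(A^Tx^*)=\bar y$ for any $x^*\in\crit f$ (Proposition \ref{prop1}), one checks that $f(x)-\min f$ equals the Bregman distance of $g^*$ between $A^Tx$ and $A^Tx^*$. Expanding with the Fenchel identity $g^*(z)=\langle z,\nabla g^*(z)\rangle-g(\nabla g^*(z))$ and invoking the strong convexity of $g$ (modulus $c$) yields the global lower bound $f(x)-\min f\geq \frac{c}{2}\|y(x)-\bar y\|^2$. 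Thus everything reduces to lower bounding $\|y(x)-\bar y\|$ by $d(x,\crit f)$.

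For part (i) I would run the following chain on a small sublevel set. First, calmness of $\partial g$ around $\bar y$ is equivalent to metric subregularity of its inverse $\nabla g^*$, giving $\|y(x)-\bar y\|\geq \kappa^{-1}d(A^Tx,\partial g(\bar y))$ for $A^Tx$ near $V_0$. Second, the linear regularity assumption (b) passes from $d(A^Tx,\partial g(\bar y))$ to $\gamma\,d(A^Tx,\partial g(\bar y)\cap R(A^T))$; and since Proposition \ref{prop1} gives $\crit f=\{x:A^Tx\in\partial g(\bar y)\}$, the set $\partial g(\bar y)\cap R(A^T)$ is exactly $A^T(\crit f)$. Third, because $\crit f$ is invariant under translations in $\Null(A^T)$ and $A^T$ is injective on $R(A)=\Null(A^T)^\perp$, one has $d(A^Tx,A^T\crit f)\geq \sigma_{\min}\,d(x,\crit f)$, where $\sigma_{\min}$ is the smallest positive singular value of $A$. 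Chaining these with the Bregman bound produces $f(x)-\min f\geq \frac{c}{2}(\gamma\sigma_{\min}/\kappa)^2 d^2(x,\crit f)$, i.e. \eqref{obj-EB}. The main obstacle is uniformity: assumption (a) only furnishes a local estimate around each $\bar z\in V_0$. Here I would use that $g$ is real-valued, so $\partial g(\bar y)$ and hence $V_0=\partial g(\bar y)\cap R(A^T)$ is compact, extract a finite subcover for uniform $(\kappa,\delta)$, and then pick $r_0$ small enough that $A^TX_{r_0}$ lies in the $\delta$-neighborhood of $V_0$; this is legitimate because the Bregman bound forces $y(x)\to\bar y$ as $r_0\downarrow 0$, whereupon outer semicontinuity and local boundedness of $\partial g$ push $A^Tx$ toward $V_0$. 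Tracking constants as $r_0\downarrow 0$ recovers the refined range $\tau\in(0,\kappa^{-1})$.

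For part (ii) I would first note the one-line consequence of convexity that $\langle\nabla f(x),x-x_p\rangle\geq f(x)-\min f$ for $x_p\in\crit f=\mathrm{Arg}\min f$ (the \eqref{cor-obj-EB} property with $\omega=1$), so that \eqref{cor-EB} follows once \eqref{obj-EB} is available on all of $X_r$. For $r\leq r_0$ this is immediate from part (i), giving $\rho_r=1$. For $r>r_0$ the constant must degrade, and I would extend by a segment argument: given $x\in X_r$ with $f(x)-\min f>r_1$, let $x_p$ be its projection onto the convex set $\crit f$ and slide along $[x_p,x]$ to the point $w=x_p+t^*(x-x_p)$ of level $\min f+r_1\in X_{r_0}$. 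Since $x_p$ projects $x$, the projection optimality inequality shows $x_p$ remains the projection of every point of $[x_p,x]$, hence $d(w,\crit f)=t^*d(x,\crit f)$. Applying part (i) at $w$ bounds $t^*$ from above, while convexity of $t\mapsto f(x_p+t(x-x_p))$ gives $f(x)-\min f\geq r_1/t^*$; together these yield the linear growth estimate $f(x)-\min f\geq \sqrt{\tau r_1/2}\,d(x,\crit f)$ on $X_r$. This same estimate bounds $d(\cdot,\crit f)$ on $X_r$, which I would feed back in to upgrade linear growth to the quadratic bound $f(x)-\min f\geq \frac{\tau r_1}{2r}d^2(x,\crit f)$; converting to \eqref{cor-EB} and tracking the numerical factors exactly as in Theorem \ref{mainresult} gives the stated $\nu=\tau\rho_r^2/8$.
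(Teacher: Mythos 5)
Your argument is essentially correct, but it reaches both conclusions by a genuinely different route than the paper. For part (i), the paper goes from calmness of $\partial g$ to metric subregularity of $\nabla g^*$ and then invokes an external second-order growth theorem (Theorem 3.1 in \cite{Drusvyatskiy2013Second}) to get quadratic growth of $z\mapsto g^*(z)-\langle\bar y,z\rangle$ with constant $\alpha\in(0,\kappa^{-1})$, before applying linear regularity and the singular-value bound. You instead exploit the standing strong convexity of $g$ directly: the identity $f(x)-\min f=D_{g^*}(A^Tx,A^Tx^*)\geq\frac{c}{2}\|\nabla g^*(A^Tx)-\bar y\|^2$ plus the calmness estimate gives the same quadratic growth in one self-contained step. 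This is more elementary (no appeal to \cite{Drusvyatskiy2013Second}), and your compactness/finite-subcover treatment of uniformity matches the paper's use of Proposition 2 in \cite{Zhou2015A}; the price is a different constant, $\tau=c\gamma^2\sigma^2(A)/\kappa^2$, and your closing assertion that letting $r_0\downarrow0$ "recovers the refined range $\tau\in(0,\kappa^{-1})$" is not substantiated --- nothing forces $c/\kappa^2$ to dominate $\kappa^{-1}$, so the "Specifically" clause of (i) is not actually delivered by your argument. For part (ii), the paper converts \eqref{obj-EB} to \eqref{res-obj-EB} via Corollary \ref{corr2}, globalizes the resulting gradient inequality $\|\nabla f(x)\|\varphi'(f(x))\geq1$ using the KL-globalization result (Proposition 30 in \cite{Bolte2015From}), and converts back; you instead globalize quadratic growth by hand with the projection-segment trick (slide along $[x_p,x]$ to the level set $\{f=\min f+r_1\}$, use that $x_p$ remains the projection along the segment, and apply convexity of $t\mapsto f(x_p+t(x-x_p))$). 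This is correct and arguably more transparent, and it even yields the better constant $\nu=\tau\rho_r^2/2$ via the implication \eqref{obj-EB}$\Rightarrow$\eqref{cor-EB} with $\omega=1$, so the stated $\tau\rho_r^2/8$ follows a fortiori. One small imprecision: the linear growth estimate $f(x)-\min f\geq\sqrt{\tau r_1/2}\,d(x,\crit f)$ holds only on $X_r\setminus X_{r_1}$ (on $X_{r_1}$ quadratic growth is stronger near $\crit f$ but does not imply the linear bound); your subsequent steps survive because $X_{r_1}$ is handled separately by part (i), but the claim "on $X_r$" should be restricted accordingly.
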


\begin{proof}    
We first prove that $V_r$ is compact for any $r\geq0$. To this end, letting $f_r=\min f+r$ and using the fact $b=A\bar{y}$, we write $X_r$ into the following form:
$$X_r=\{x\in\RR^n: g^*(A^Tx)-\langle \bar{y}, A^Tx\rangle \leq f_r\}.$$
Denote 
$$Y_r:=\{y\in\RR^m: g^*(y)-\langle \bar{y}, y\rangle \leq f_r\}.$$
Obviously, $A^TX_r\subseteq Y_r$. Let $\tilde{g}(\cdot):=g^*(\cdot)-\langle \bar{y}, \cdot \rangle$. Then, $\tilde{g}^*(y)=g(y+\bar{y})$. Thus, $\dom \tilde{g}^*=\dom g=\RR^m$. This implies that $\tilde{g}$ is coercive (see Theorem 11.8 in \cite{Rockafellar2004Variational}) and hence $Y_r=\{y\in\RR^m: \tilde{g}(y) \leq f_r\}$ is bounded. Furthermore,  since $\tilde{g}$ is continuous, $Y_r$ is closed and hence compact. Thereby, $V_r=\cl(A^TX_r)\subseteq Y_r$ is bounded and hence also compact.

Second, we show that $V_0 \subseteq \partial g(\bar{y})$. Recall that we have shown that $X_0=\{x\in \RR^n: A^Tx\in \partial g(\bar{y})\}$ in Proposition \ref{prop1}. Hence, $A^TX_0\subseteq \partial g(\bar{y})$. Since $g$ is a real-valued convex function, $\partial g(\bar{y})$ must be nonempty, closed, and bounded according to Theorem 23.4 in \cite{rockafellar1970convex} and Theorem 8.6 in \cite{Rockafellar2004Variational}. Therefore,  $V_0=\cl(A^TX_0)\subseteq \partial g(\bar{y})$

Now, since $\partial g$ is calm at $\bar{y}$ for any $\bar{z}\in V_0$ and $V_0 \subseteq \partial g(\bar{y})$ is compact, by Proposition 2 in \cite{Zhou2015A} we can conclude that there exist constants $\kappa, \epsilon>0$ such that
\begin{equation}\label{unicalm}
\partial g(y)\cap (V_0+\epsilon \BB_\mathcal{E})\subseteq \partial g(\bar{y})+\kappa\cdot \|y-\bar{y}\|_2 \BB_\mathcal{E}, ~~\forall y\in\mathcal{E},
\end{equation}
where we denote $\RR^m$ by $\mathcal{E}$ for simplicity.
Pick $z\in V_0+\epsilon \BB_\mathcal{E}$ and let $y=\nabla g^*(z)$. Then, $z\in \partial g(y)$ due to $\partial g =(\nabla g^*)^{-1}$ and hence $z \in \partial g(y)\cap (V_0+\epsilon \BB_\mathcal{E})$.
By the inclusion \eqref{unicalm}, we obtain
\begin{equation}\label{subreg}
d(z, \partial g(\bar{y}))\leq \kappa  \|y-\bar{y}\|_2=\kappa \cdot d(\bar{y}, \nabla g^*(z)) , ~~\forall z\in V_0+\epsilon \BB_\mathcal{E},
\end{equation}
which can be rewritten as
\begin{equation}\label{subreg1}
d(z, (\nabla g^*)^{-1}(\bar{y}))\leq  \kappa\cdot d(\bar{y}, \nabla g^*(z)) , ~~\forall z\in V_0+\epsilon \BB_\mathcal{E}.
\end{equation}
This implies that $\nabla g^*$ is always metrically subregular at each $\bar{z}\in V_0$ for $\bar{y}$. Thus, by Theorem 3.1 in \cite{Drusvyatskiy2013Second}, for each $\bar{z}\in V_0$ there exists a neighborhood $\bar{z}+\epsilon(\bar{z})\BB_\mathcal{E}$ and a positive constant $\alpha(\bar{z})$ such that
\begin{equation}\label{quagrow}
g^*(z)\geq g^*(\bar{z})-\langle \bar{y},\bar{z}-z\rangle+\frac{\alpha(\bar{z})}{2}\cdot d^2(z,(\nabla g^*)^{-1}(\bar{y})), ~~\forall z\in \mathcal{E}~~ \textrm{with}~~ \|z-\bar{z}\|_2\leq \epsilon(\bar{z}),
\end{equation}
where the constant $\alpha(\bar{z})$ can be chosen arbitrarily in $(0, \kappa^{-1})$. Note that $\{\bar{z}+\epsilon(\bar{z})\BB^o_\mathcal{E}\}_{\bar{z}\in V_0}$ forms an open cover of the compact set $V_0$. Hence, by the Heine-Borel theorem, there exist $K$ points (where $K\geq 1$ is finite) $\bar{z}_1,\cdots, \bar{z}_K\in V_0$ such that
$$V_0\subseteq U:= \bigcup_{i=1}^K(\bar{z}_i+\epsilon(\bar{z}_i)\BB^o_\mathcal{E}).$$
Let $\alpha=\min\{\alpha(\bar{z}_1), \cdots, \alpha(\bar{z}_K)\}$, which can be chosen arbitrarily in $(0, \kappa^{-1})$, and note that $\min f = g^*(\bar{z})-\langle \bar{y}, \bar{z}\rangle, ~\forall \bar{z}\in V_0$. From \eqref{quagrow}, we have 
\begin{equation*}
g^*(z)-\langle \bar{y}, z\rangle \geq \min f +\frac{\alpha}{2}\cdot d^2(z,(\nabla g^*)^{-1}(\bar{y})), ~~\forall z\in U.
\end{equation*}
Letting $r_0>0$ be small enough such that $V_{r_0}\subseteq U$ and using the fact of $(\nabla g^*)^{-1} = \partial g$, we obtain
\begin{equation*}
g^*(z)-\langle \bar{y}, z\rangle \geq \min f +\frac{\alpha}{2}\cdot d^2(z, \partial g(\bar{y})), ~~\forall z\in V_{r_0},
\end{equation*}
and hence,
\begin{equation}\label{qgoff}
f(x) - \min f \geq  \frac{\alpha}{2}\cdot d^2(A^Tx, \partial g(\bar{y})), ~~\forall x\in X_{r_0}.
\end{equation}
Using the linear regularity property of $\{\partial g(\bar{y}), R(A^T)\}$, we derive that
\begin{align*}
d(A^Tx, \partial g(\bar{y}))&\geq \gamma \cdot d(A^Tx, \partial g(\bar{y})\cap R(A^T)) = \gamma \cdot \min_{A^Tu\in\partial g(\bar{y})}\|A^Tx-A^Tu\|\\
&= \gamma \cdot\min_{y\in A^T X_0}\|A^Tx-y\|\geq \gamma \cdot  \min_{y\in V_0}\|A^Tx-y\|= \gamma \cdot\|A^Tx- \hat{y}\|,
\end{align*}
where such $\hat{y}\in V_0$ exists due to the compactness of $V_0$.
Now, we follow the argument in \cite{Frank2015linear} to finish the proof of (i). Since $\hat{y}\in V_0=\cl(A^TX_0)$, we can find a sequence $\{x_n\}_{n=0}^{\infty}\subset X_0$ such that $A^Tx_n \rightarrow  \hat{y}$ as $n\rightarrow +\infty$.  Denote the null space of $A^T$ by $N(A^T)$ and the minimal positive singular value of $A$ by $\sigma(A)$. Using the fact of $\mathrm{Arg}\min f+N(A^T) \subseteq \mathrm{Arg}\min f$, we can derive that
$$d(x, \mathrm{Arg}\min f)\leq \|x-(x_n+\mathcal{P}_{N(A^T)}(x-x_n))\|\leq \frac{1}{\sigma(A)}\|A^Tx-A^Tx_n\|,~n\geq 0,$$
where $\mathcal{P}_{N(A^T)}$ stands for the orthogonal projection operator onto $N(A^T)$. Thus, by letting $n\rightarrow +\infty$, we obtain
  \begin{equation}\label{more}
  d(x, \mathrm{Arg}\min f)\leq \frac{1}{\sigma(A)}\|A^Tx-\hat{y}\| \leq \frac{d(A^Tx, \partial g(\bar{y}))}{\gamma \cdot\sigma(A)}.
  \end{equation}
Note that $\mathrm{Arg}\min f=\crit f$. Thereby, in view of \eqref{qgoff} and \eqref{more}, the \eqref{obj-EB} condition follows with $\tau=\alpha\gamma^2\sigma^2(A)$.

Let us prove (ii). Without loss of generality, we assume that $\min f=0$ and $r\geq r_0$. Since for any $r>0$ the sublevel set $X_r$ is $\nabla f$-invariant, using \eqref{mainqg} together with the equivalence established in Corollary \ref{corr2}, we can conclude that $f$ satisfies the $(\nabla f, \eta, X_{r_0})$-\eqref{res-obj-EB} conditions with $\eta=\sqrt{\frac{\tau}{2}}$, that is
   \begin{equation}\label{ro1}
  \forall~ x\in X_{r_0},~~\|\nabla f(x)\| \geq \eta\cdot\sqrt{f(x) }.
\end{equation}
Let $\varphi(t):=2\eta^{-1}t^{\frac{1}{2}}$. Then, the property \eqref{ro1} can be written as
\begin{equation}\label{ro2}
  \forall~ x\in X_{r_0},~~\|\nabla f(x)\| \varphi^\prime (f(x))\geq 1.
\end{equation}
By applying Proposition 30 in \cite{Bolte2015From}, a globalization result for KL inequalities, to \eqref{ro2}, we have that for the given $r_1\in (0, r_0)$, the function given by
 \begin{eqnarray*}
\phi(t):=
\left\{\begin{array}{lll}
 \varphi(t),       &\textrm{when} ~~t\leq r_1, \\
 \varphi(r_1)+(t-r_1)\varphi^\prime (r_1), &\textrm{when}~~t\geq r_1,
\end{array} \right.
\end{eqnarray*}
is desingularising for $f$ on all of $\RR^n$ and hence it holds
\begin{equation}\label{ro3}
  \forall~ x\in X_{r},~~\|\nabla f(x)\| \phi^\prime (f(x))\geq 1.
\end{equation}
Thereby, we can get
$$\|\nabla f(x)\|\geq \eta \sqrt{r_1},~~\forall x\in X_r\cap X^c_{r_1},$$
where $X^c_{r_1}$ is the complement of $X_{r_1}$.
By the definition of $c_r$, we can further obtain
$$\|\nabla f(x)\|\geq \eta c_r\sqrt{r}\geq \eta c_r \sqrt{f(x)},~~\forall x\in X_r\cap X^c_{r_1}.$$
Finally, noting the expression of $\rho_r$ and together with \eqref{ro1}, for $r>0$ we have
$$\|\nabla f(x)\|\geq \eta \rho_r \sqrt{f(x)},~~\forall x\in X_r,$$
which is just the $(\nabla f, \eta \rho_r, X_r)$-\eqref{res-obj-EB} condition. Thus, the $(\nabla f, \nu, X_{r})$-\eqref{cor-EB} condition follows from Corollary \ref{corr2}. Using the relevant formulas in Theorem \ref{mainresult}, we have
$$\nu =\frac{1}{4}(\eta \rho_r)^2=\frac{1}{4}\rho_r^2 \cdot \frac{\tau}{2}=\frac{1}{8}\rho_r^2\tau,$$
which completes the proof.
\end{proof}

\begin{remark}
By directly invoking Corollary 4.3 in \cite{Artacho2013Metric}, we can derive \eqref{quagrow} with the constant satisfying $\alpha(\bar{z})\in (0, \frac{1}{4\kappa})$, which is slightly worse than that of $\alpha(\bar{z})\in (0, \kappa^{-1})$.
\end{remark}

\begin{remark}
The author of \cite{Frank2015linear}, with slightly different assumptions, proved by contradiction that the dual objective function $f(x)= g^*(A^Tx)-\langle b,  x\rangle$ satisfies the $(\nabla f, \nu, X_{r})$-\eqref{cor-EB} condition. While the author of \cite{Frank2015linear} requires that $\partial g$ is calm around $\bar{y}$ for any $\bar{z}\in \RR^m$, i.e.,  the local upper Lipschitz-continuity property \eqref{calm3}, we only require that $\partial g$ is calm around $\bar{y}$ for any $\bar{z}\in V_0$. Our proof is by means of the KL inequality globalization technique developed in \cite{Bolte2015From}, and hence quite different from that of \cite{Frank2015linear}.
\end{remark}
\begin{remark}
Verifying EB conditions for more general functions with the form $f(x):=h(Ax)+l(x)$  was studied recently in \cite{Drusvyatskiy2016Error,Zhou2015A,Li2016Calculus}. Specialized to the dual objective function $f(x)= g^*(A^Tx)-\langle b,  x\rangle$, the existing theory usually requires $g^*$ to be strictly or strongly convex; see e.g., Corollary 4.3 in \cite{Drusvyatskiy2016Error} and Assumption 1 in \cite{Zhou2015A}. In contrast, our study, following the research line of work \cite{Frank2015linear}, relies on exploiting the primal-dual structure, and is thus quite different from that in \cite{Drusvyatskiy2016Error,Zhou2015A,Li2016Calculus}.
\end{remark}

\section{Discussion}\label{sec9}
In this paper, we provide a new perspective for studying EB conditions and analyzing linear convergence of gradient-type methods. Under our theoretical framework, a group of new technical results are discovered. Especially, some EB conditions, previously known to be sufficient for linear convergence, are also necessary; and Nesterov's accelerated forward-backward algorithm, previously known to be R-linearly convergent, is also Q-linearly convergent.  Finally, we close this paper with the following possible future works:
\begin{enumerate}
\item We have defined a group of abstract EB conditions of ``square type". But we do not know whether the idea behind can be extended to that of general types by introducing so-called desingularizing functions \cite{Bolte2015From}, so that the other EB conditions discussed in  \cite{Garrigos2017conv} can be included in a more general framework.

  \item Although we have shown sufficient conditions guaranteeing linear convergence for PALM and Nesterov's accelerated forward-backward algorithms, it is still unclear whether they are necessary. The very recent work \cite{Luke2017necessary} might shed light on this topic.

  \item Verifying EB conditions with \textsl{high probability} for non-convex functions has proven to be a very powerful approach for non-convex optimization; see e.g. \cite{Candes2015Phase,Tu2016Low,Liu2016On}. Thus, seeking or verifying new classes of non-convex functions, satisfying EB condition with high-probability, deserves future study.

  \item What are the optimal rates of linear convergence (or say, exact worst-case convergence rates) for gradient-type methods under general EB conditions? The method of performance estimation, originally proposed in \cite{Drori2014smooth} and further developed in \cite{Kim2016optimized,Taylor2016smooth,Taylor2017exact}, might be useful for this topic.

  \item Ordinary  differential equation (ODE) approaches are recently used to study (accelerated) gradient-type methods \cite{Su2016A,Wilson2016A}. Except one paper \cite{Yang2016The}, existing analyses only consider general convex and strongly convex conditions, and do not work on general EB conditions. It would be interesting to investigate whether the EB condition presented in this paper can be embedded in the ODE approaches to study linear convergence for gradient-type methods.
\end{enumerate}

\section*{Acknowledgements}
I am grateful to the anonymous referees, the associate editor, and the coeditor Prof. Adrian S. Lewis for many useful comments, which allowed me to significantly improve the original presentation. I would like to thank Prof. Zaiwen Wen for his invitation and hospitality during my visit to BeiJing International Center for Mathematical Research, and to thank Prof. Dmitriy Drusvyatskiy for a careful reading of an early draft of this manuscript, and for valuable comments and suggestions. I also thank Profs. Chao Ding, Bin Dong, Lei Guo, Yongjin Liu, Deren Han, Mark Schmidt, Anthony Man-Cho So, and Wotao Yin for their time and many helpful discussions with me.
Further thanks due to my cousin Boya Ouyang who helped me with my English writing, and to PhD students Ke Guo, Wei Peng, Ziyang Yuan, Xiaoya Zhang, who looked over the manuscript and corrected several typos. While visiting Chinese Academy of Sciences, I was particularly fortunate to be acquainted with Prof. Florian Jarre, who carefully read and polished this paper. This work is supported by the National Science Foundation of China (No.11501569 and No.61571008).

\bibliographystyle{abbrv}
\small
%\bibliography{Guarantee}

\begin{thebibliography}{10}

\bibitem{Artacho2013Metric}
F.~J.~A. Artacho and M.~H. Geoffroy.
\newblock Metric subregularity of the convex subdifferential in {B}anach
  spaces.
\newblock {\em J. Nonlinear Convex A.}, 15(15):35--47, 2015.

\bibitem{attouch2007on}
H.~Attouch and J.~Bolte.
\newblock On the convergence of the proximal algorithm for nonsmooth functions
  involving analytic features.
\newblock {\em Math. Program., Ser. B}, 116:5--16, 2009.

\bibitem{attouch2013convergence}
H.~Attouch, J.~Bolte, and B.~F. Svaiter.
\newblock Convergence of descent methods for semi-algebraic and tame problems:
  {P}roximal algorithms, forward--backward splitting, and regularized
  gauss--seidel methods.
\newblock {\em Math. Program., Ser. A}, 137(1-2):91--129, 2013.

\bibitem{Attouch2017conv}
H.~Attouch and A.~Cabot.
\newblock Convergence rates of inertial forward-backward algorithms.
\newblock {\em SIAM J. Optim.}, 26(3):1824--1834, 2016.

\bibitem{Attouch2015the}
H.~Attouch and J.~Peypouquet.
\newblock The rate of convergence of {N}esterov's accelerated forward-backard
  method is actually $o(k^{-2})$.
\newblock {\em SIAM J. Optim.}, 28(1):849-874, 2018.

\bibitem{Banjac2016on}
G.~Banjac, K.~Margellos, and P.~J. Goulart.
\newblock On the convergence of a regularized {J}acobi algorithm for convex
  optimization.
\newblock {\em IEEE T Automat Contr.}, 63(4):1113--1119, 2018.

\bibitem{Bauschke2011Convex}
H.~H. Bauschke and P.~L. Combettes.
\newblock Convex analysis and monotone operator theory in {H}ilbert spaces.
\newblock {\em Springer Berlin}, 2011.

\bibitem{beck2009fast}
A.~Beck and M.~Teboulle.
\newblock A fast iterative shrinkage-thresholding algorithm for linear inverse
  problems.
\newblock {\em SIAM J. Imag. Sci.}, 17(4):183--202, 2009.

\bibitem{bertsekas2011convex}
D.~P. Bertsekas.
\newblock {\em Convex optimization theory}.
\newblock Athena Scienfitic and Tsinghua University Press, 2011.

\bibitem{Bolte2015From}
J.~Bolte, T.~P. Nguyen, J.~Peypouquet, and B.~W. Suter.
\newblock From error bounds to the complexity of first-order descent methods
  for convex functions.
\newblock {\em Math. Program. Ser. A}, 165(2):471-507, 2017.

\bibitem{bolte2014proximal}
J.~Bolte, S.~Sabach, and M.~Teboulle.
\newblock Proximal alternating linearized minimization for nonconvex and
  nonsmooth problems.
\newblock {\em Math. Program., Ser. A}, 146(1-2):459--494, 2014.

\bibitem{Brezis1973Op}
H.~Br$\acute{e}$zis.
\newblock Op$\acute{e}$rateurs maximaux monotones et semi-groupes de
  contractions dans les espaces de {H}ilbert.
\newblock {\em North-Holland Mathematics studies 5}, 1973.

\bibitem{Bruck1975Asymptotic}
R.~E. Bruck.
\newblock Asymptotic convergence of nonlinear contraction semigroups in
  {H}ilbert space.
\newblock {\em J. Funct. Anal.}, 18(1):15--26, 1975.

\bibitem{Candes2015Phase}
E.~J. Cand$\grave{e}$s, X.~Li, and M.~Soltanolkotabi.
\newblock Phase retrieval via wirtinger flow: Theory and algorithms.
\newblock {\em IEEE Trans. Inform. Theory}, 61(4):1985--2007, 2015.

\bibitem{Chamb2016an}
A.~Chambolle and T.~Pock.
\newblock An introduction to continuous optimization for imaging.
\newblock {\em Acta Numer.}, 25:161--319, 2016.

\bibitem{Cruz2018on}
J. Cruz, G. Li, and T. Nghia.
\newblock On the Q-linear convergence of forward-backward splitting method and uniqueness of optimal solution to Lasso.
\newblock {\em  Researchgate}, November 2017.

\bibitem{Dontchev2013Implicit}
A.~L. Dontchev and R.~T. Rockafellar.
\newblock {\em Implicit Functions and Solution Mappings}.
\newblock Springer, 2009.

\bibitem{Drori2014smooth}
Y.~Drori and M.~Teboulle.
\newblock Smooth strongly convex interpolation and exact worst-case performance
  of first-order methods.
\newblock {\em Math. Program., Ser. A}, 145(1-2):451--482, 2014.

\bibitem{Drusvyatskiy2016An}
D.~Drusvyatskiy and C.~Kempton.
\newblock An accelerated algorithm for minimizaing convex compositions.
\newblock {\em arXiv:1605.00125v1[math.OC] 30 Apr 2016}.

\bibitem{Drusvyatskiy2016Error}
D.~Drusvyatskiy and A.~S. Lewis.
\newblock Error bounds, quadratic growth, and linear convergence of proximal
  methods.
\newblock {\em Math Oper Res.}, Published Online: March 15, 2018.

\bibitem{Drusvyatskiy2013Second}
D.~Drusvyatskiy, B.~S. Mordukhovich, and T.~T.~A. Nghia.
\newblock Second-order growth, tilt stability, and metric regularity of the
  subdifferential.
\newblock {\em J. Convex Anal.}, 21(4):1165--1192, 2014.

\bibitem{Garrigos2017conv}
G.~Garrigos, L.~Rosasco, and S.~Villa.
\newblock Convergence of the forward-backward algorithm: {B}eyond the
  worst-case with the help of geometry.
\newblock {\em arXiv:1703.09477v2 [math.OC] 29 Mar 2017.}

\bibitem{Gong14linear}
P.~Gong and J.~Ye.
\newblock Linear convergence of variance-reduced projected stochastic gradient
  without strong convexity.
\newblock {\em arXiv:1406.1102v2 [cs.NA] 10 Jul 2015}.

\bibitem{Hale2008fixed}
E.~T. Hale, W.~T. Yin, and Y.~Zhang.
\newblock Fixed point continuation for $\ell_1$-minimization: Methodology and
  convergence.
\newblock {\em SIAM J. Optim.}, 19(3):1107--1130, 2008.

\bibitem{Hoffman1952approximate}
A.~Hoffman.
\newblock On approximate solutions of systems of linear inequalities.
\newblock {\em J.Res.Natl. Bureau Standards}, 49:263--265, 1952.

\bibitem{Hong2016iter}
M.~Hong, X.~Wang, M.~Razaviyayn, and Z.-Q. Luo.
\newblock Iteration complexity analysis of block coordinate descent methods.
\newblock {\em Math. Program., Ser. A}, 163:85--114, 2017.

\bibitem{Karimil2016linear}
H.~Karimi, J.~Nutini, and M.~Schmidt.
\newblock {L}inear convergence of proximal-gradient methods under the
  {P}olyak-{{\L}}ojasiewicz condition.
\newblock {\em arXiv:1608.04636v1 [cs.LG] 16 Aug 2016.}, 2016.

\bibitem{Karimil2015linear}
H.~Karimi and M.~Schmidt.
\newblock {L}inear convergence of proximal-gradient methods under the
  {P}olyak-{{\L}}ojasiewicz condition.
\newblock {\em NIPS OPT}, 2015.

\bibitem{Karimi2016A}
S.~Karimi and S.~Vavasis.
\newblock A unified convergence bound for conjugate gradient and accelerated
  gradient.
\newblock {\em arXiv:1605.003200v1[math.OC] 11 May 2016.}

\bibitem{Kim2016optimized}
D.~Kim and J.~Fessler.
\newblock Optimized first-order methods for smooth convex minimization.
\newblock {\em Math. Program., Ser. A}, 159(1):81--107, 2016.

\bibitem{Lai2012Augmented}
M.~J. Lai and W.~T. Yin.
\newblock Augmented $\ell_1$ and nuclear-norm models with a globally linearly
  convergent algorithm.
\newblock {\em SIAM J. Imaging Sci.}, 6(2):1059--1091, 2013.

\bibitem{Li2016Calculus}
G.~Li and T.~K. Pong.
\newblock Calculus of the exponent of kurdyka-lojasiewicz inequality and its
  applications to linear convergence of first-order methods.
\newblock {\em  Found Comput Math.}, 1-34, Published Online: 10 August 2017.

\bibitem{Li2016An}
X.~Li, T.~Zhao, R.~Arora, H.~Liu, and M.~Hong.
\newblock An improved convergence analysis of cyclic block coordinate
  descent-type methods for strongly convex minimization.
\newblock {\em arXiv:1607.02793v1[math.OC] 10 Jul 2016}.

\bibitem{Liu2016On}
H.~Liu, M.-C. Yue, and A.~M.-C. So.
\newblock On the estimation performance and convergence rate of the generalized
  power method for phase synchronization.
\newblock {\em SIAM J. Optim.},  27(4):2426--2446, 2017. 

\bibitem{Liu2014Asynchronous}
J.~Liu and S.~J. Wright.
\newblock Asynchronous stochastic coordinate descent: Parallelism and
  convergence properties.
\newblock {\em SIAM J. Optim.}, 25(1):351--376, 2015.

\bibitem{lojasiewicz1963propriete}
S.~{\L}ojasiewicz.
\newblock Une propri{\'e}t{\'e} topologique des sous-ensembles analytiques
  r{\'e}els.
\newblock {\em Colloques internationaux du CNRS}, 117:87--89, 1963.

\bibitem{Luke2017necessary}
D.~R. Luke, N.~H. Thao, and M.~Teboulle.
\newblock Necessary conditions for linear convergence of picard iterations and
  application to alternating projections.
\newblock {\em arXiv:1704.08926v1 [math.OC] 28 Apr 2017}.

\bibitem{Luo1990On}
Z.~Q. Luo and P.~Tseng.
\newblock On the linear convergence of descent methods for convex essentially
  smooth minimization.
\newblock In {\em SIAM J. Control Optim.}, 408--425, 1992.

\bibitem{Luo1993Error}
Z.~Q. Luo and P.~Tseng.
\newblock Error bounds and convergence analysis of feasible descent methods: a general approach.
\newblock In {\em  Ann. Oper. Res.},46-47(1):157--178, 1993.


\bibitem{Luque1984Asy}
F.~Luque.
\newblock Asymptotic convergence analysis of the proximal point algorithm.
\newblock {\em SIAM J. Optim.}, 22(2):277--293, 1984.

\bibitem{Ma2017under}
C. X. Ma,  N. V. Gudapati,  M. Jahani, R. Tappenden,  and M. Tak$\acute{a}\breve{c}$
\newblock  Underestimate Sequences via Quadratic Averaging.
\newblock {\em arXiv:1710.03695}, 2017.

\bibitem{I2015Linear}
I.~Necoara, Y.~Nesterov, and F.~Glineur.
\newblock Linear convergence of first order methods for non-strongly convex
  optimization.
\newblock {\em Math. Program., Ser. A}, Published Online: 22 January 2018.

\bibitem{nesterov2004introductory}
Y.~Nesterov.
\newblock {\em Introductory lectures on convex optimization: A basic course}.
\newblock Kluwer Academic Publishers, 2004.

\bibitem{Necedal1997numericall}
J.~Nocedal and S.~J. Wright.
\newblock {\em Numerical Optimization}.
\newblock Springer-Verlag, New York, 1997.

\bibitem{Polyak1963Gradient}
B.~T. Polyak.
\newblock Gradient methods for the minimisation of functionals.
\newblock {\em Ussr Computational Mathematics and Mathematical Physics},
  3(4):864--878, 1963.

\bibitem{Robinson1981some}
S.~M. Robinson.
\newblock Some continuity properties of polyhedral multifunctions.
\newblock {\em Math. Program. Study}, 14:206--214, 1981.

\bibitem{rockafellar1970convex}
R.~T. Rockafellar.
\newblock {\em Convex analysis}.
\newblock Princeton university press, 1970.

\bibitem{Rockafellar2004Variational}
R.~T. Rockafellar and R.~J.~B. Wets.
\newblock {\em Variational analysis}.
\newblock Springer, 1998.

\bibitem{ruszczynski2006nonlinear}
A.~P. Ruszczy{\'n}ski.
\newblock {\em Nonlinear optimization}, volume~13.
\newblock Princeton university press, 2006.

\bibitem{Frank2015linear}
F.~Sch$\ddot{o}$pfer.
\newblock Linear convergence of descent methods for the unconstrained
  minimization of restricted strongly convex functions.
\newblock {\em SIAM J. Optim.}, 26(3):1883--1911, 2016.

\bibitem{Shefi2016on}
R.~Shefi and M.~Teboulle.
\newblock On the rate of convergence of the proximal alternating linearized
  minimization algorithm for convex problems.
\newblock {\em EURO J Comput. Optim.}, 4:27--46, 2016.

\bibitem{So2013Non}
M.~C. So.
\newblock Non-asymptotic convergence analysis of inexact gradient methods for
  machine learning without strong convexity.
\newblock {\em arXiv:1309.0113v1 [math.OC] 31 Aug 2013.}

\bibitem{Su2016A}
W.~Su, S.~Boyd, and E.~J. Cand$\grave{e}$s.
\newblock A differential equation for modeling {N}esterovs accelerated gradient
  method: {T}heory and insights.
\newblock {\em J. Mach. Learn. Res.}, 17:1--13, 2016.

\bibitem{Taylor2017exact}
A.~B. Taylor, J.~M. Hendrickx, and F.~Glineur.
\newblock Exact worst-case convergence rates of the proximal gradient method
  for composite convex minimization.
\newblock {\em arXiv:1705.04398v1 [math.OC] 11 May 2017}.

\bibitem{Taylor2016smooth}
A.~B. Taylor, J.~M. Hendrickx, and F.~Glineur.
\newblock Smooth strongly convex interpolation and exact worst-case performance
  of first-order methods.
\newblock {\em Math. Program., Ser. A}, 161(1-2):307--345, 2017.

\bibitem{Tu2016Low}
S.~Tu, R.~Boczar, M.~Simchowitz, and B.~Recht.
\newblock Low-rank solutions of linear matrix equations via procrustes flow.
\newblock {\em arXiv:1507.03566v2[math.OC] 5 Feb 2016}.

\bibitem{Wang2014Iteration}
P.~W. Wang and C.~J. Lin.
\newblock Iteration complexity of feasible descent methods for convex
  optimization.
\newblock {\em J. Mach. Learn. Res.}, 15(2):1523--1548, 2014.

\bibitem{Wilson2016A}
A.~C. Wilson, B.~Recht, and M.~I. Jordan.
\newblock A {L}yapunov analysis of momentum methods in optimization.
\newblock {\em arXiv:1611.02635v1 [math.OC] 8 Nov 2016}.

\bibitem{xiao2013a}
L.~Xiao and T.~Zhang.
\newblock A proximal-gradient homotopy method for the sparse least-squares
  problem.
\newblock {\em SIAM J. Optim.}, 23(2):1062--1091, 2013.

\bibitem{Yang2016The}
L.~Yang, R.~Arora, V.~Braverman, and T.~Zhao.
\newblock The physical systems behind optimization algorithms.
\newblock {\em arXiv:1612.02803v1 [cs.LG] 8 Dec 2016}.

\bibitem{Yun2017global}
C. Yun, S. Sra, and A. Jadbabaie.
\newblock Global optimality conditions for deep neural networks.
\newblock {\em arXiv:1707.02444v1 [cs.LG] 8 Jul 2017}.

\bibitem{zhang2016new}
H.~Zhang.
\newblock New analysis of linear convergence of gradient-type methods via
  unifying error bound conditions.
\newblock {\em arXiv:1606.00269v3 [math.OC] 17 Aug 2016}.

\bibitem{Zhang2015The}
H.~Zhang.
\newblock The restricted strong convexity revisited: analysis of equivalence to
  error bound and quadratic growth.
\newblock {\em Optim. Lett.}, 11(4):817--833, 2017.

\bibitem{zhang2015restricted}
H.~Zhang and L.~Cheng.
\newblock Restricted strong convexity and its applications to convergence
  analysis of gradient-type methods.
\newblock {\em Optim. Lett.}, 9(5):961--979, 2015.

\bibitem{zhang2017proj}
H.~Zhang and L.~Cheng.
\newblock Projected shrinkage algorithm for box-constrained
  $\ell_1$-minimization.
\newblock {\em Optim. Lett.}, 11(1):55--70, 2017.

\bibitem{Zhang2015A}
H.~Zhang, L.~Cheng, and W.~T. Yin.
\newblock A dual algorithm for a class of augmented convex signal recovery
  models.
\newblock {\em Commun. Math. Sci.}, 13(1):103--112, 2015.

\bibitem{zhang2013gradient}
H.~Zhang and W.~T. Yin.
\newblock Gradient methods for convex minimization: {B}etter rates under weaker
  conditions.
\newblock Technical report, CAM Report 13-17, UCLA, 2013.

\bibitem{Zhou2017char}
Y. Zhou, and Y. Liang.
\newblock Characterization of Gradient Dominance and Regularity Conditions
for Neural Networks.
\newblock {\em arXiv:1710.06910}, 2017.

\bibitem{Zhou2015A}
Z.~Zhou  and M.~C. So. 
\newblock A unified approach to error bounds for structured convex optimization
  problems.
\newblock {\em Math. Program., Ser. A}, 165(2): 689--728, 2017.


\bibitem{zol1978on}
T.~Zolezzi.
\newblock On equiwellset minimum problems.
\newblock {\em Appl. Math. Optim}, 4:203--223, 1978.

\end{thebibliography}

\end{document}